\newcommand{\C}{\mathbb{C}}
\newcommand{\Z}{\mathbb{Z}}
\newcommand{\delp}[1]{\frac{\partial}{\partial #1}}
\newcommand{\mc}[1]{\mathcal{#1}}
\newcommand{\pr}{\mathbb{P}^1}
\newcommand{\on}[1]{\operatorname{#1}}
\newcommand{\osp}{\operatorname{OSp}(1|2, \C)}
\newcommand{\posp}{\operatorname{POSp}(1|2, \C)}
\newcommand{\ov}[1]{\overline{#1}}
\newcommand{\hr}{\hookrightarrow}
\newcommand{\nr}{n_R} 
\newcommand{\sbs}{T_{\bos}}
\newcommand{\tbf}[1]{\textbf{#1}}
\newcommand{\ns}{n_{\on{NS}}}
\newcommand{\spin}{\mc{S} \mc{M}_g}
\newcommand{\mnr}{\mathfrak{M}_{0,\nr}}
\newcommand{\pp}{\mathbb{P}}
\newcommand{\Isom}{\un{\on{Isom}}}
\newcommand{\lgr}{\xrightarrow{\sim}}
\newcommand{\lr}{\longrightarrow}
\newcommand{\deform}{\on{Def}}
\newcommand{\prn}{\on{pr}_1}
\newcommand{\prt}{\on{pr}_2}
\newcommand{\delpz}{\frac{\partial}{\partial z}}
\newcommand{\delpzeta}{\frac{\partial}{\partial \zeta}}
\newextarrow{\xbigtoto}{{20}{20}{20}{20}}
   {\bigRelbar\bigRelbar{\bigtwoarrowsleft\rightarrow\rightarrow}}
\newtheorem{theorem}{Theorem}[section]
\newtheorem{lemma}[theorem]{Lemma}
\newtheorem{proposition}[theorem]{Proposition}
\newtheorem{corollary}[theorem]{Corollary}
\newtheorem{definition}[theorem]{Definition}
\theoremstyle{remark}
\newtheorem{remark}[theorem]{Remark}
\newtheorem{example}[theorem]{Example}
\DeclareMathOperator{\Aut}{Aut}
\DeclareMathOperator{\Ber}{Ber}
\DeclareMathOperator{\bos}{bos}
\DeclareMathOperator{\Conf}{Conf}
\DeclareMathOperator{\Hom}{Hom}
\DeclareMathOperator{\id}{id}
\DeclareMathOperator{\Proj}{Proj}
\DeclareMathOperator{\Spec}{Spec}
\DeclareRobustCommand\un[1]{\underline{#1}}
\newcommand{\wo}{\mathbb{W}\mathbb{P}^{1|1}(1,1\, |\, 1)} \newcommand{\wb}{\mathbb{W}\mathbb{P}^{1|1}(1,1\, |\, 1-\nr/2)} 
\newcommand{\wn}{\mathbb{W}\mathbb{P}^{1|1}(1,1\, |\, 1-n/2)} 
\newcommand{\wm}{\mathbb{W}\mathbb{P}^{1|1}(1,1\, |\, m)} 
\newcommand{\sS}{\on{SupSch}}
\newcommand{\wy}{\mathbb{W} \mathbb{P}}
\newcommand{\awy}{\Aut(\wy)}
\title{The supermoduli space of genus zero super Riemann surfaces\\ with Ramond punctures} 
\author{Nadia Ott\footnote{\emph{Email address}: \href{mailto:ottnadia@sas.upenn.edu}{ottnadia@sas.upenn.edu}\\ \indent Department of Mathematics, University of Pennsylvania, Philadelphia, PA 19104, USA
}
\hspace{1mm} and Alexander A. Voronov\footnote{\emph{Email address}: \href{mailto:voronov@umn.edu}{voronov@umn.edu}\\ \indent School of Mathematics, University of Minnesota, Minneapolis, MN 55455, USA, and \\ \indent Kavli IPMU (WPI), UTIAS, University of Tokyo, Kashiwa, Chiba 277-8583, Japan}}
\date{}
\begin{document}

\maketitle

\begin{abstract}
We give an explicit quotient description of the $(\nr-3\, | \,
  \nr/2-2)$-dimensional supermoduli space $\mnr$ of genus zero super Riemann surfaces with $\nr \ge 4$ Ramond punctures and prove that it is a Deligne-Mumford superstack. We also give an explicit quotient description of the supermoduli stack of genus zero supercurves with no SUSY structure.
\end{abstract}

\tableofcontents

\section{Introduction}

Super Riemann surfaces, also
known as supersymmetric (SUSY) curves, and their moduli spaces, called \emph{supermoduli spaces}, are supersymmetric generalizations of the
classical notions of Riemann surfaces. Super Riemann surfaces arose in
the 1980's as the worldsheets of propagating superstrings, and the
Feynman path integrals of superstring theory reduced to integrals over
the supermoduli spaces to produce such important quantities as the
partition function and scattering amplitudes (correlators).

Since the
2000's there has been a strong revival of the interest to these
topics, given a 2002 breakthrough \cite{dhoker-phong} in computing these quantities by D'Hoker and Phong in genus 2, a series of important papers \cite{witten2012notes, witten:integration, witten:perturbative, witten:superstring} of Witten
developing the theory further, reaching genus 3 and emphasizing complex analytic and
algebro-geometric aspects, and a 2015 Supermoduli Workshop at the
Simons Center for Geometry and Physics in Stony Brook, New York. These
days, papers presenting in-depth study of questions that seemed to be
out of reach in the 80s pop up now and then in print and on the arXiv:
papers of Codogni \cite{codogni}, Codogni and Viviani \cite{codogni2017moduli}, Diroff \cite{dan}, Dom\'{i}nguez
P\'{e}rez, Hern\'{a}ndez Ruip\'{e}rez, and Sancho de Salas \cite{perez1997global}, Donagi and
Witten \cite{donagi2015supermoduli}, Felder, Kazhdan, and Polishchuk \cite{felder2019regularity,felder2020moduli}, Fioresi and her coauthors \cite{carmeli2011mathematical,carmeli-fioresi,fioresi2014susy,fioresi2015projective,fioresi-kwok},
Maxwell \cite{maxwell2020super}, Moosavian and Zhou \cite{moosavian2019existence}, Rothstein and Rabin \cite{rabin-rothstein-D,rothstein-rabin}, and Th.~Th. Voronov \cite{voronov-t}, to name a few.

A super Riemann surface is described by the data $(X, \mc{D}, i)$, where $X$ is a a compact complex supermanifold of dimension $(1|1)$, and $\mc{D}$ is a rank $(0|1)$ vector bundle on $X$ with an isomorphism 
\[ i: \mc{D}^{\otimes 2} \overset{[ \ , \ ]}{\lr} \mc{T}_X/\mc{D} \]
given by the \emph{supercommutator} $[ \ , \ ]$. The data $(\mc{D},i)$ is called a \emph{SUSY or superconformal structure}. 

Despite this slightly exotic definition, super Riemann surfaces and their moduli have mathematical properties quite similar to those of classical Riemann surfaces or algebraic curves. For instance, prime divisors on a super Riemann surface, given as sub-supermanifolds of dimension $(0|1)$, are in one-to-one correspondence with points. In fact, if 
one studies families parameterized only by commuting (as opposed to anti-commuting) variables, then super Riemann surfaces are nothing other than spin curves. Another feature is that 
super Riemann surfaces have unobstructed deformation theory and hence are expected to have smooth moduli. Our interest here is in the moduli problem, which we will study from
the point of view of algebraic geometry.  The algebro-geometric approach to supermoduli theory was initiated by Deligne in a famous letter \cite{deligneletter} to 
Yu.~I. Manin, where the existence of a compactified moduli of stable super Riemann surfaces
was sketched, appealing to the (assumed) generalization of Schlessinger's conditions for pro-representability and Artin's existence theorems to supergeometry.\footnote{After the first version of the present article was posted on the arXiv, a detailed argument for the existence of the moduli space of stable super Riemann surfaces was given in \cite{moosavian2019existence}, along
the lines of Deligne's letter but without explicit use of Artin's theorems.}  
Works on supermoduli from other perspectives include \cite{lebrun1988moduli, perez1997global, codogni2017moduli}.

We are interested specifically in genus zero super Riemann surfaces, and in giving a construction of the supermoduli space by an explicit quotient presentation (rather than in an
abstract existence argument).

The desire to have a concrete presentation was expressed by E.~Witten in a letter to A.~S. Schwarz \cite{witten-to-schwarz}. As with ordinary curves, genus zero super Riemann surfaces present a certain challenge, as they have an infinitesimal group of automorphisms, but, as expected, if sufficiently many punctures are present, the moduli problem becomes 
representable by a Deligne-Mumford superstack.  (These punctures also appear naturally in the compactification of the supermoduli spaces of higher genus as well as superstring theory.)  In fact, there are 
two kinds of punctures on a super Riemann surfaces.

One kind, the Neveu-Schwarz punctures, are entirely analogous to marked points on ordinary Riemann surfaces, and their
moduli are straightforward to understand.
The moduli space $\mathfrak{M}_{0,\ns}$ of genus zero super Riemann surfaces with $\ns \ge 3$ Neveu-Schwarz punctures may be described just like in the classical case:
\[
\mathfrak{M}_{0,\ns} \cong \Conf (\C \pp^{1|1}, \ns)/\posp,
\]
where $\Conf (\C \pp^{1|1}, \ns)$ denotes the configuration space of $\ns$ distinct, labeled points \footnote{We use the word ``point" to mean a map $\on{Spec} \mathbb{C} \to \mathbb{C} \mathbb{P}^{1|1}$. More generally, given a family of superschemes $\pi: X \to T$, a $T$-point of $X$ is a section $s: T \to X$ such that $\pi \circ s=\on{id}_T$. } on the complex projective superspace $\C \pp^{1|1}$ and $\posp = \osp/\{\pm \id\}$ stands for the projective orthosymplectic supergroup, which is also the supergroup of  automorphisms of $\C \pp^{1|1}$ preserving the standard SUSY structure on it, cf.\ \cite{witten2012notes}. Note that the action of the group is free up to stabilizer $\Z_2$ and therefore $\dim \mathfrak{M}_{0,\ns} = \dim \Conf (\C \pp^{1|1}, \ns) - \dim \osp = (\ns-3, \ns -2)$.

By contrast, the Ramond punctures are subtler.\footnote{In a letter \cite{witten-to-schwarz} to Schwarz, Witten wrote: ``I am sorry to say this, but even in genus zero I have had trouble explicitly describing to my satisfaction the moduli space of a genus zero curve with $2k$ Ramond punctures.''}
To explain what these are (following \cite{witten2012notes}), first recall that 
locally  on a super Riemann surface $(X, \mc{D}, i)$, there exist coordinates $(z\, | \, \zeta)$, called \emph{superconformal coordinates}, such that $\mc{D}$ is generated  
by the vector field
\[ D=\delp{\zeta} + \zeta \delp{z}  \] from which we compute that $\frac12 [D, D] = \delp{z}$.

Now consider a subbundle  $\mc{D}$ of the tangent bundle locally generated  by the vector field
\[ D_{\zeta} = \delp{\zeta} + h(z) \zeta \delp{z}\]
where $h(z)$ is some degree-$\nr$ polynomial. Then $\frac12 [D, D_{\theta}] = h(z) \delp{z}$ and $D_{\zeta}$ is integrable along $h(z)= 0$. 

A \emph{super Riemann surface with $\nr$ Ramond punctures} is described by four pieces of data $(X, \mc{R}, \mc{D},i)$; where $X$ is as before but where we now allow $\mc{D}$ to be integrable exactly along a codimension-$(1|0)$ sub-supermanifold with $\nr$ components, \emph{i.e}., a \emph{divisor} $\mc{R}$ of degree $\nr$, in the sense that we have an isomorphism
\[ i: \mc{D}^{\otimes 2} \overset{[ \ , \ ]}{\lr} \mc{T}_X/\mc{D}(-\mc{R})\] 

We call $(\mc{D},i)$ a SUSY structure, though note that
$(X, \mc{R}, \mc{D}, i)$ is \textbf{not} a super Riemann surface in the sense our previous definition, since $\mc{D}$ is not everywhere nonintegrable. We term the components of $\mc{R}$  \emph{Ramond punctures}, though be aware that a codimension-$(1|0)$ sub-supermanifold on a supercurve is \tbf{not} a puncture, which
is a codimension-$(1|1)$ sub-supermanifold.

However, there exists no dictionary identifying codimension-$(1|0)$ divisors and punctures, when we are talking about Ramond punctures. It is the maximal non-integrability condition on $\mc{D}$ which implies such a duality for Neveu-Schwarz punctures.

Let us now describe the basic idea behind the construction of the moduli space of genus zero super Riemann surfaces with $n_R \ge 4$ Ramond punctures, $\mnr$.  Henceforth, we assume that all NS and Ramond punctures are labeled. 

It is useful to take the ``dualized'' definition of a SUSY structure, essentially a rank-$(1|0)$ subbundle $\mc{L}$ of the cotangent bundle meeting certain non-integrability conditions, see Definition \ref{SRSdef}.
We will\footnote{not following any standard convention} call $\mc{L}$ the \emph{SUSY line bundle}. 
In \cite{witten2012notes}, Witten gives a complete characterization of a genus zero SUSY curve with Ramond punctures over a point.
Over  a point, $\on{Spec} k$, any supercurve supporting a SUSY structure  with $\nr$ Ramond punctures is isomorphic to the \emph{weighted projective superspace} $\wb$ (henceforth $\wy$) and the SUSY line bundle
 is isomorphic to $\mc{O}_{\wy}(-2)$. A SUSY structure on $\wy$ can then be specified by giving a global section of $\mc{H}om(\mc{O}_{\wy}(-2), \Omega) \cong \Omega_{\wy}^1(2)$ satisfying the maximal non-integrability condition. This assignment is unique up to an invertible factor.

We first extend the characterization in \cite{witten2012notes} to families of super Riemann surfaces over ordinary, purely even schemes. It turns out that, unlike the standard projective superspace $\mathbb{P}^{1|1}$, the weighted projective superspace $\wb$ is not rigid but has purely odd moduli. This fact makes the supermoduli problem accessible by super deformation theory.

Globalizing the facts that (1) a genus zero super Riemann surface over a point is $\wy$ and (2) $\wy$ has no even deformations, we show in Proposition \ref{wittenclassifying} that,
given a family of genus zero $\nr$-punctured super Riemann surfaces over a purely even base, $\nr \ge 4$, the underlying family of supercurves is \'etale locally trivial.  

In Section~\ref{Supercurve}, we study the first-order deformations  of genus zero supercurves, showing in particular that they are \emph{not} (in general) rigid.   
This is quite in contrast to their bosonic counterpart $\pr$ as well as to the  projective superspace $\mathbb{P}^{1|1}$ (the supercurve underlying a SUSY curve when $\nr = 0$).  Later in Section \ref{deformations}, we prove that $\wy$ has a universal deformation space (Theorem \ref{versal}), 
denoted by $S$, and give an explicit construction of the universal deformation, denoted by $Z$, of $\wy$, in two ways.  The first way: $Z$ is glued from two copies
of $\mathbb{A}^{1|1}_S$, using the (constant in $S$) $z \to 1/w$ gluing for the even coordinate and a (varying in $S$) gluing for the odd coordinate \eqref{gluingformula}.  
The second way: $Z$ is a hypersurface given by an explicit equation inside $\pr_S \times_S \mathbb{A}^{0|2} \cong \pr \times \mathbb{A}^{0|n_R/2}$ (Proposition \ref{zisprojective}).

The space $S$ is an affine superspace of dimension $(0|\nr/2-2)$, matching the odd (at this point, expected) dimension of $\mnr$. In Section \ref{moduliofw} we define the  moduli superstack $M(1-\nr/2)$ of families of genus zero supercurves with fiber isomorphic to $\wy$. We then prove in Theorem \ref{formalsmooth} that that the map $S \to M(1-\nr/2)$ associated to $Z$ is a smooth cover. 
Finally, we use the natural action of the group superscheme $\awy$ (described in Section~\ref{automorphismofwy}) on $S$, to give an explicit construction of $M(1-\nr/2)$ as a quotient superstack $[S/\awy]$ in Theorem \ref{artin}. 

We have already parameterized the supercurves; it remains to parameterize the
SUSY structures on them.
We start with parameterizing SUSY structures on the universal deformation $Z$.
To do this, we consider the natural morphism $\mnr \to M(1-\nr/2)$ forgetting the SUSY structure and pull the smooth cover $S \to M(1-\nr/2)$ back to $\mnr$:
    \begin{equation} \label{introduction}
\begin{tikzcd}
 S \times_{M(1-\nr/2)} \mnr \arrow[d] \arrow[r] & S \arrow[d] \\
                        \mnr \arrow[r]                    & M(1-\nr/2).                      
\end{tikzcd}
\end{equation} 
We observe that this pullback is a stack parameterizing the SUSY structures on $Z$.  In Section \ref{susystructure}, we give an explicit description of this pullback using the fact that any SUSY line bundle on $\wy$ deforms to a unique, up to isomorphism, line bundle on $Z$ (Proposition \ref{otwodeformsuniquely}). 
We notice that the chart 
description of the line bundle $\mc{O}(n)$ (transition function $z^{-n}$ over the two standard overlapping charts) over $\wy$ can still be used to define a line bundle over $Z$, which we denote $\mc{O}_Z(n)$.
In particular, it follows 
that a SUSY line bundle over $Z$ must be isomorphic to the deformation  $\mc{O}_Z(-2)$ of the aforementioned $\mc{O}_{\wy}(2)$. 
The moduli space of SUSY structures on 
$Z$, just like that on $\wy$ above, turns out to be the quotient of an open subsuperscheme $Y$ (described in Section \ref{zassrs}) of $\mathbb{H}^0(Z, \Omega_{Z/S} \otimes \mc{O}_Z(2))$, the super vector space $H^0(Z, \Omega_{Z/S} \otimes \mc{O}_Z(2))$ viewed as an affine superspace, by the group superscheme of global invertible functions $\Gamma^*_Z := \mathbb{H}^0(Z, \mc{O}_Z^*)$. In particular, the pullback in \eqref{introduction} is represented by this quotient (Theorem \ref{quotientequal}) .

The action of $\awy$ on $S$ naturally lifts to an action on $Y/\Gamma^*_Z$. Moreover,
the diagram \eqref{introduction} is now identified as
\begin{center}
\begin{tikzcd}
Y/\Gamma^*_Z \arrow[r] \arrow[d] & S \arrow[d]        \\
\mnr \arrow[r]                    & {M(1-\nr/2)=[S/\awy]},
\end{tikzcd}
\end{center}
with the morphism $Y/\Gamma^*_Z \to \mnr$ being 
an $\awy$-torsor; and thus
$\mnr = [(Y/\Gamma^*_Z)/\awy]$ (Theorem \ref{maintheorem}).

The last section of the paper, Section \ref{automorphisms}, proves that the stabilizers of the action of $\awy$ on $Y/\Gamma^*_Z$ are isomorphic to $\Z_2$, and so we finally conclude that $\mnr$ is a Deligne-Mumford superstack (Corollary \ref{finat}).

Thus, the construction of the supermoduli space $\mnr$ is rather straightforward. First, take the universal deformation space $S$ of the supercurve $\wy$. The space $S = \mathbb{H}^1(\wy, \mc{T}_{\wy})$ is the affine superspace associated with the super vector space $H^1(\wy, \mc{T}_{\wy})$. Then take the universal curve $Z$ over $S$ and consider the affine superspace $W = \mathbb{H}^0(Z, \Omega_{Z/S}(2))$ over $S$. Take the open subsuperscheme $Y$ of $W$ defined as the complement to a certain discriminant locus of $W$. Take the quotient $Y/\Gamma^*_Z$ of $Y$ by a natural action of $\Gamma^*_Z = \mathbb{H}^0(Z, \mc{O}_Z^*)$ and then mod out once again by a natural action of $\awy$ to get $\mnr =  [(Y/\Gamma^*_Z)/\awy]$. The hard part is to show that the resulting superstack $\mnr$ is indeed a Deligne-Mumford superstack representing the supermoduli functor. The bulk of the paper is dedicated to demonstrating this.

This paper has had a long life since the first arXiv posting, with which the authors were never satisfied, given that the construction of the supermoduli space was outlined, but only the existence of the corresponding Deligne-Mumford superstack was proven. This is a considerably revamped version of the original paper. Since the first version of the paper, a number of important papers on the moduli superstack of punctured super Riemann surfaces appeared: \cite{bruzzo2021supermoduli} proves that with a suitable level structure, the supermoduli space of punctured super Riemann surface of genus $g \ge 2$ is an algebraic superspace and \cite{moosavian2019existence} shows that the supermoduli space of stable super Riemann surfaces ($2g-2 + \ns + \nr > 0$) is a Deligne-Mumford superstack, among other things.

\subsection*{Acknowledgments}

We are grateful to Christine Berkesch, Ugo Bruzzo, Ionu\c{t} Ciocan-Fontanine, Giulio Codogni, Daniel J. Diroff, Ron Donagi, Rita Fioresi, and Daniel Hern\'andez Ruip\'erez for helpful discussions. N.~O. thanks the Mittag-Leffler Institute and the Centre for Quantum Mathematics as the University of Southern Denmark for hospitality at various stages of work on the project. A.~A.~V. expresses his gratitude to New York University Abu Dhabi for hospitality during his sabbatical in 2019. The work of the first author was supported by the Simons Postdoctoral grant at the University of Pennsylvania.
The work of the second author was supported by World Premier International Research Center Initiative (WPI), MEXT, Japan, and a Collaboration grant from the Simons Foundation (\#585720).

\section{Notation and Conventions} 
 
The letter $k$ will denote an algebraically closed field $k$ of characteristic zero.  All superschemes will be assumed to be over $\on{Spec} k$, unless otherwise specified. All superschemes are also assumed to be Noetherian and locally of finite type. We will work with the following categories: 

\begin{quote}
$\sS=$ the category of superschemes over $ \on{Spec} k$. 
\end{quote}

\begin{quote}
$\on{sArt}_{k}=$ the category of local Artin $k$-superalgebras with residue field $k$.
\end{quote}

\section{Algebraic Supergeometry}

In this section we give a brief introduction to algebraic supergeometry,  following \cite{vaintrob1990deformation, varadarajan2004supersymmetry, carmeli2011mathematical}. 
 
 \begin{definition}(Superspace). A \emph{superspace} is a \emph{locally super ringed space} $(X, \mc{O}_X)$, \emph{i.e}., a pair consisting of a topological space $X$  and a structure sheaf 
 \[ \mc{O}_X = \mc{O}_X^+ \oplus \mc{O}_X^-\]
 of supercommutative rings such that the stalks are local supercommutative rings. The component $\mc{O}_X^+$ (respectively, $\mc{O}_X^-)$ is called the sheaf of \emph{even (resp. odd) (super)functions}.
 \end{definition}

We write $\mc{J}= \mc{O}_X^- \oplus (\mc{O}_X^-)^2$ for the ideal in the structure sheaf $\mc{O}_X$ generated by $\mc{O}_X^{-}$. The ringed space $X_{\bos}:=(X, \mc{O}_X/\mc{J})$, which is a purely even subspace of $X$, is called the \emph{bosonic reduction} of $X$.  
If the closed immersion $i: X_{\bos} \hr X$ induced by $\mc{O}_X \to \mc{O}_X/\mc{J}$ has a retraction $\pi: X \to X_{\bos}$, then we say that  $X$ is \emph{projected}. 

A stronger condition on $X$ than projectedness is that of splitness. Roughly, $X$ is split if the retraction map $\pi: X \to X_{\bos}$ is a vector bundle, whose fibers will automatically be purely odd super vector spaces. More precisely, we say that $X$ is \emph{split} if $\mc{O}_X$ is isomorphic to the sheaf
\[ \on{Gr}_{\mc{J}}(\mc{O}_X)= \mc{O}_X/\mc{J} \oplus \mc{J}/\mc{J}^2 \oplus \mc{J}^2/\mc{J}^3 \oplus \cdots, \] 
of supercommutative rings via an isomorphism inducing the identity on the common quotient $\mc{O}_{X_{\bos}} =\mc{O}_X/\mc{J}$.  
Any study of split superspaces can be reduced to studying vector bundles over $X_{\bos}$. However, superspaces are not, in general, split. For example, Donagi and Witten show in \cite{donagi2015supermoduli} that $\mathfrak{M}_{g, \ns}$ is not split (or even projected) whenever $g \ge 5$.

\begin{definition}[Superscheme] A \emph{superscheme} $(X, \mc{O}_X)$ is a superspace $(X, \mc{O}_X)$ such that $(X, \mc{O}_X^+)$ is a scheme (over $k$) and $\mc{O}_X^-$ is a quasi-coherent sheaf of $\mc{O}_X^+$-modules. A \emph{morphism of superschemes} $f:(X, \mc{O}_X) \to (Y, \mc{O}_Y)$ is a continuous map $f:X \to Y$ of topological spaces along with a morphism $f^{\#}: \mc{O}_Y \to f_* \mc{O}_X$ of sheaves of supercommutative $k$-algebras that is local, \emph{i.e}., induces a local morphism between the stalks.
\end{definition}

\begin{remark} \normalfont The quasi-coherence condition comes from the characterization of complex superspaces (see \cite{vaintrob1990deformation}, Proposition 1.1.3) as superspaces for which $\mc{O}_X^-$ is a coherent sheaf of $\mc{O}_X^+$-modules. Note, that we only require $\mc{O}_X^-$ to be quasi-coherent. We will call a superscheme \emph{Noetherian} if the scheme $(X, \mc{O}_X^+)$ is Noetherian and $\mc{O}_X^-$ is a coherent sheaf of $\mc{O}_X^+$-modules. 
\end{remark}

\begin{example}[Affine Superscheme] For a supercommutative ring $R$, the standard construction of an affine scheme generalizes easily to give an affine superscheme $\on{Spec} R$, see \cite{leites1983theory,manin1988complex,carmeli2011mathematical}.
\end{example}

. These  are defined below. We will also define weighted projective superspaces, as these spaces will play a central role in the moduli problem. 

\begin{example}[Affine Superspace] \normalfont Given a super vector space $V$ of dimension $m|n$ over the ground field $k$, we define the corresponding \emph{affine superspace} as
\[
\mathbb{V} = \on{Spec} \mc{S} (V^\vee),
\]
where $V^\vee$ stands for the dual super vector space and $S$ denotes the supersymmetric algebra.
The standard \emph{affine $(m|n)$-superspace} is the affine superspace $\mathbb{A}^{m|n}$ associated with the vector superspace $V = k^{m|n}$. If we choose coordinates $x_1, \dots, x_m, \theta_1, \dots, \theta_n \in V^\vee$ on $V$, then $V$ may be identified with $k^{m|n}$ and
\[ \mathbb{V} \cong \mathbb{A}^{m|n} = \on{Spec} k[x_1, \cdots, x_m \; | \;  \theta_1, \cdots, \theta_n]. \] 
The bosonic reduction of $\mathbb{A}^{m|n}$ is the affine space $\mathbb{A}^m$. The super affine space is split and may be regarded as a super ``ringification" of its bosonic reduction, the affine space $\mathbb{A}^m$, with a structure sheaf 
\[ \mc{O}_{\mathbb{A}^{m|n}} = \mc{S}_{\mc{O}_{\mathbb{A}^m}} (\Pi \mc{O}_{\mathbb{A}^m}^n),
\]
where $\Pi$ denotes the parity change operation, which shifts the grading by 1 modulo 2. The supersymmetric algebra of a purely odd linear object, in this case the sheaf $\Pi \mc{O}_{\mathbb{A}^m}^n$ of $\mc{O}_{\mathbb{A}^m}$-modules, is well-known as the exterior (Grassmann) algebra of the corresponding purely even object:
\[
\mc{S} (\Pi \mc{O}_{\mathbb{A}^m}^n) = \bigwedge(\mc{O}_{\mathbb{A}^m}^n).
\]
We will avoid using the exterior algebra, because its natural extension to the super world is superanticommutative rather than supercommutative.
\end{example}

\begin{example}[Projective Superspace] \normalfont Given a super vector space $V = V^+ \oplus V^-$ of dimension $m+1|n$, the $(m|n)$-dimensional super projective space $\mathbb{P}(V)$ may be defined as the superspace of lines, \emph{i.e}., $(1|0)$-dimensional vector subspaces of $V$. More technically, $\mathbb{P}(V)$ is the superscheme representing the functor of points that assigns a superscheme $T$ the set of \emph{locally supplemented} $(1|0)$-line subbundles\footnote{Here and henceforth a \emph{super vector bundle} is a locally free sheaf and a \emph{subbundle} is a locally free subsheaf. A \emph{morphism of vector bundles} is a just a morphism of locally free sheaves.} of the trivial super vector bundle $T \times V$ over $T$. These are linear duals $\mc{L}^\vee$ of locally free, rank-$(1|0)$ quotient sheaves $\mc{L}$ of $\mc{O}_T \otimes V^\vee$ on $T$. One may also think of $\mathbb{P}(V)$ as the quotient
\[
\mathbb{P}(V) = (\mathbb{V}\setminus \{0\})/\mathbb{G}_m
\]
of the super affine space $\mathbb{V}$ with deleted origin by the \emph{multiplicative group} $\mathbb{G}_m = \on{GL}(1)$, acting on $\mathbb{V}\setminus \{0\}$ by dilations. Finally, the construction of a projective spectrum generalizes to the super case, and one may identify
\[ \mathbb{P}(V) := \Proj \mc{S}(V^\vee), \]
where the algebra $\mc{S}(V^\vee)$ is $\mathbb{Z} \oplus \mathbb{Z}_2$-graded with the $\mathbb{Z}$-grading coming from the symmetric powers and the $\mathbb{Z}_2$-grading coming from the $\mathbb{Z}_2$-grading on $V^\vee$, the supercommutativity engaging only the $\Z_2$-grading. 
It turns out the super projective space over $k$ is isomorphic to a split superscheme defined by the ordinary projective space $\mathbb{P}(V^+)$ and structure sheaf
\[ \mc{O}_{\mathbb{P}(V)} \cong \mc{S}((V^-)^\vee \otimes \mathcal{O}_{\mathbb{P}(V^+)}(-1)), \]
see \cite[Proposition 4.3.5]{manin1988complex}.
If $V = k^{m+1|n}$ with coordinates $x_0, \dots, x_m \, | \, \theta_1, \dots, \theta_n$, then 
\[ \mathbb{P}^{m|n} = \Proj k[x_0, \cdots, x_m \, | \,  \theta_1, \cdots, \theta_n], \]
where all the generators have degree one in the $\mathbb{Z}$-grading.
\end{example}

\begin{example}[Weighted Projective Superspace] \label{wsp} \normalfont Given a weight vector $(a_0, \dots, a_m \, | \,  \alpha_1, \dots, \alpha_n) \in \mathbb{Z}^{m+1|n}$ with $a_i > 0$, $i = 0, \dots, m$, the \emph{weighted super projective space} $\mathbb{W}\mathbb{P}^{m|n}(a_0, \dots, a_m \, | \,  \alpha_1, \dots, \alpha_n)$ is defined as
\[ \mathbb{W} \mathbb{P}^{m|n}(a_0, \dots, a_m \, |\,  \alpha_1, \dots, \alpha_n) := \Proj k[x_0, \cdots, x_m \, | \,  \theta_1, \cdots, \theta_n], \] 
where the $\mathbb{Z}$-grading on the polynomial superalgebra $k[x_0, \cdots, x_m \, | \, \theta_1, \cdots, \theta_n]$ is defined by declaring the degree of $x_i$ to be $a_i$ and the degree of $\theta_j$ to be $\alpha_j$ for all $i$ and $j$. One may also identify the weighted super projective space as the quotient
\[
(\mathbb{A}^{m+1|n} \setminus \{0\})/\mathbb{G}_m,
\]
where the multiplicative group acts on the super affine space according to the given weights:
\[
g (x_0, \dots, x_m \, |\, \theta_1, \dots, \theta_n) = (g^{a_0} x_0, \dots, g^{a_m} x_m \, | \, g^{\alpha_1} \theta_1, \dots, g^{\alpha_n} \theta_n).
\]
We will be interested in particular in the weighted super projective space
$\wm$ for $m \in \Z$, in which case it is also split and isomorphic to 
the ordinary projective line $\mathbb{P}^1$ equipped with the structure sheaf
\begin{equation}
    \label{wm-functions}
\mc{O}_{\wm} = \mc{S} \left( \Pi \mc{O}_{\mathbb{P}^1}(-m) \right) = \mc{O}_{\mathbb{P}^1} \oplus \Pi \mc{O}_{\mathbb{P}^1}(-m).
\end{equation}
\end{example}

\section{Supercurves} \label{Supercurve}

A \emph{supercurve} $X$ is a $(1|1)$-dimensional, smooth (\emph{i.e}., $\Omega_X^1$ is a locally free sheaf of $\mc{O}_X$-modules of rank $(1|1)$),  proper, connected superscheme. Its bosonic reduction $X_{\bos}$ is an ordinary curve. The \emph{genus} of a supercurve $X$ is equal to the genus of $X_{\bos}$.  We will be mostly interested in families of supercurves over superschemes.

\begin{definition}[Relative Supercurve] A \emph{family of supercurves over a superscheme $T$} is a smooth, proper morphism $\pi: X \to T$ of superschemes of relative dimension $(1|1)$ whose fibers $X_t$ are supercurves. 
\end{definition}
We will often call a family of supercurves over $T$ just a \emph{supercurve over} $T$.

Examples of \emph{genus-zero} supercurves include the \emph{super projective line} $\pp^{1|1} = \wo$ and the weighted projective superspaces $\wm$. The next lemma shows that this set of examples is a complete classification of genus-zero supercurves.

\begin{lemma} \label{scm}  Let $X$ be a genus-zero supercurve over $k$. Then there exists a unique $m \in \mathbb{Z}$ such that $X \cong \wm$.   
  
  \end{lemma}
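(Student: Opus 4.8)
The plan is to reduce the classification to the elementary fact that an odd-dimension-one superscheme is automatically split, and then to match the resulting split datum against \eqref{wm-functions}. First I would note that the bosonic reduction $X_{\bos}$ is a smooth, proper, connected curve of genus zero over the algebraically closed field $k$, hence $X_{\bos} \cong \pr$. The crucial structural point is that, because $X$ has odd dimension one, the ideal sheaf $\mc{J}$ satisfies $\mc{J}^2 = 0$: smoothness lets me choose locally an odd coordinate $\zeta$ with $\zeta^2 = 0$, so locally $\mc{O}_X^- = \mc{O}_{X_{\bos}}\,\zeta$ and $(\mc{O}_X^-)^2 = 0$. Consequently $\mc{J} = \mc{O}_X^-$ is purely odd, $\mc{O}_X^+ = \mc{O}_{X_{\bos}}$, and $\mc{J} = \mc{J}/\mc{J}^2$ is a rank-$(0|1)$ locally free $\mc{O}_{\pr}$-module, i.e.\ $\mc{J} = \Pi N$ for a line bundle $N$ on $\pr$.

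Next I would observe that these facts force $X$ to be split. Writing $\mc{O}_X = \mc{O}_{X_{\bos}} \oplus \mc{J}$ as a $\Z_2$-graded sheaf, the even$\,\cdot\,$even product lands in $\mc{O}_{X_{\bos}}$, the even$\,\cdot\,$odd product is the $\mc{O}_{\pr}$-module action on $N$, and the odd$\,\cdot\,$odd product lands in $\mc{J}^2 = 0$; this is exactly the multiplication on the associated graded $\on{Gr}_{\mc{J}}\mc{O}_X$, so the identity is an isomorphism of sheaves of supercommutative algebras and $\mc{O}_X \cong \mc{O}_{\pr} \oplus \Pi N$. Since line bundles on $\pr$ are classified by degree, there is a unique $m \in \Z$ with $N \cong \mc{O}_{\pr}(-m)$. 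Comparing with \eqref{wm-functions}, which reads $\mc{O}_{\wm} = \mc{O}_{\pr} \oplus \Pi \mc{O}_{\pr}(-m)$, gives an isomorphism of structure sheaves over $\pr$, hence an isomorphism of superschemes $X \cong \wm$. For uniqueness I would note that the line bundle $\mc{J}/\mc{J}^2$ on $X_{\bos}$ is an intrinsic invariant of $X$ whose degree is $-m$, so $m$ is determined by $X$.

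The honest assessment is that there is little genuine difficulty here: the entire content is concentrated in the vanishing $\mc{J}^2 = 0$ and the resulting triviality of the square-zero extension, after which everything reduces to the classification of line bundles on $\pr$. The one place to be careful is the bookkeeping of parity and degree conventions, so that the split datum is matched to $\wm$ with the correct sign, and to phrase the final step as a genuine isomorphism of superschemes over a fixed identification $X_{\bos} \cong \pr$ rather than a mere abstract comparison of structure sheaves.
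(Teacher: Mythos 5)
Your proposal is correct and follows essentially the same route as the paper's proof: both hinge on the observation that $(1|1)$-dimensionality forces $\mc{J}^2 = 0$, hence $X$ is split over $X_{\bos} \cong \pr$, and then the classification of line bundles on $\pr$ by degree pins down the unique $m$ via Example \ref{wsp}. Your version merely spells out a few steps the paper leaves implicit (the local-coordinate justification of $\mc{J}^2=0$, the verification that the multiplication agrees with that of the associated graded, and the uniqueness of $m$ as the degree of the intrinsic invariant $\mc{J}/\mc{J}^2$), all of which are fine.
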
 
  
  \begin{proof} Let $\mc{J}= \mc{O}_X^- \oplus (\mc{O}_X^-)^2$ be the ideal in the structure sheaf $\mc{O}_X = \mc{O}_X^+ \oplus \mc{O}_X^-$ generated by the odd elements. Since $X$ is $(1|1)$-dimensional, $\mc{J}^2=(\mc{O}_X^-)^2=0$ and so $\mc{J}=\mc{O}_X^-$. 
  The associated graded ring  $\on{Gr}_{\mc{J}}(\mc{O}_X)=\mc{O}_X/\mc{J} \oplus \mc{J}$ is, therefore, equal to $\mc{O}_{X_{\bos}} \oplus \mc{O}_X^-$, implying that $X$ is split, \emph{i.e}., $X$ is the total space of the odd line bundle $\mc{J}^\vee$ over $X_{\bos} \cong \pp^1$.  There exists $m \in \Z$ such that $\mc{J}^\vee \cong \Pi \mc{O}_{\pp^1}(m)$ and, therefore, $\mc{O}_X \cong \mc{S}\left(\Pi \mc{O}_{\pp^1}(-m)\right)$. 
  The isomorphism $X \cong \wm$ then follows from Example \ref{wsp}.
  
  \end{proof} 
  
The above lemma shows that the reduced space underlying the $\wm$ is independent of $m$ and isomorphic to $\pp^1$. The curve $\pp^1$ is well-known to be rigid--- that is, any first order deformations of $\pp^1$ is isomorphic to the trivial deformation of $\pp^1$, or equivalently, $h^1(\pp^1, \mc{T}{\pp^1})=0$. 
This rigidity does not extend to $\wm$: 
 \begin{lemma} \label{dimensions}
 For $X = \wm$, we have
 \begin{align*}
  \on{dim}_k H^1(X, \mc{T}X) & = \begin{cases} 0 \,| -m-1 & \  \text{for} \ m <-1, \\ 
0\,|\,0 & \  \text{for} \  -1 \le m \le 3, \\
 0\,|\,m-3 & \  \text{for} \  m > 3. 
\end{cases}
      \end{align*}
In particular, for $ m < -1$ and $m > 3$, the weighted projective superspaces $\wm$ are not rigid.
     \end{lemma}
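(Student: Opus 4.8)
The plan is to exploit the splitting established in Lemma \ref{scm}. Write $X = \wm$, so that $X_{\bos} \cong \pr$ and, by \eqref{wm-functions}, $\mc{O}_X = \mc{O}_{\pr} \oplus \Pi\mc{O}_{\pr}(-m)$; I set $\mc{L} := \mc{O}_{\pr}(-m)$ for the odd line bundle, so the odd part of the structure sheaf is $\Pi\mc{L}$ and $(\mc{O}_X^-)^2 = 0$. Since $X$ and $X_{\bos}$ share the same underlying topological space, $H^1(X, \mc{T}_X)$ is just the sheaf cohomology on $\pr$ of $\mc{T}_X = \mc{D}er_k(\mc{O}_X)$, regarded as a sheaf of $\mc{O}_{\pr}$-modules, so the whole computation takes place on $\pr$.

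Next I would put a $\Z$-grading on $\mc{T}_X$ by $\zeta$-degree, assigning $\zeta$ degree $+1$ and $\partial_\zeta$ degree $-1$, where $(z\,|\,\zeta)$ are the standard chart coordinates. Because $(\mc{O}_X^-)^2 = 0$, this grading is concentrated in degrees $-1, 0, +1$, and it is preserved under the change of coordinates to the other chart $(w\,|\,\eta)$ with $w = 1/z$, $\eta = z^{-m}\zeta$ (the would-be cross terms vanish since $\zeta^2 = 0$). Hence $\mc{T}_X = \mc{T}_X^{(-1)} \oplus \mc{T}_X^{(0)} \oplus \mc{T}_X^{(+1)}$ as sheaves on $\pr$. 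I would then identify each summand by tracking its transition function: $\mc{T}_X^{(-1)}$, spanned by $\partial_\zeta$, is the odd bundle $\Pi\mc{L}^\vee = \Pi\mc{O}_{\pr}(m)$; $\mc{T}_X^{(+1)}$, spanned by $\zeta\partial_z$, is $\Pi(\mc{L}\otimes\mc{T}_{\pr}) = \Pi\mc{O}_{\pr}(2-m)$; and the even summand $\mc{T}_X^{(0)}$, spanned by $\partial_z$ together with the Euler field $\zeta\partial_\zeta$, fits into a short exact sequence $0 \to \mc{O}_{\pr} \to \mc{T}_X^{(0)} \to \mc{T}_{\pr} \to 0$, where $\mc{T}_{\pr} \cong \mc{O}_{\pr}(2)$ and the sub is the endomorphism $\mc{E}nd(\mc{L}) = \mc{O}_{\pr}$.

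Finally I would read off the answer from the standard fact $\dim_k H^1(\pr, \mc{O}_{\pr}(d)) = \max(0, -d-1)$. The even part of $H^1(X, \mc{T}_X)$ comes only from $\mc{T}_X^{(0)}$ and vanishes for every $m$: in the long exact sequence of the extension above it is trapped between $H^1(\pr, \mc{O}_{\pr}) = 0$ and $H^1(\pr, \mc{O}_{\pr}(2)) = 0$. The odd part is $H^1(\pr, \mc{O}_{\pr}(m)) \oplus H^1(\pr, \mc{O}_{\pr}(2-m))$, of dimension $\max(0,-m-1) + \max(0, m-3)$, and distinguishing the ranges $m \le -2$, $-1 \le m \le 3$, and $m \ge 4$ reproduces the three cases in the statement, with non-rigidity precisely when one of the two summands is nonzero.

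The main obstacle will be the middle step: justifying the graded decomposition as a genuine splitting of sheaves and, above all, pinning down the correct twists (the degrees $m$ and $2-m$, with the right signs) through an honest transition-function computation between the two charts. Everything after that is the cohomology of line bundles on $\pr$ plus parity bookkeeping; the key structural observation is that only $\mc{T}_X^{(0)}$ is even and it contributes nothing to $H^1$, which is what forces the even dimension to be identically zero.
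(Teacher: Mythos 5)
Your proof is correct, but it takes a genuinely different route from the paper's. The paper disposes of this lemma in one line as a ``straightforward \v{C}ech cohomology computation,'' and indeed later (Lemma \ref{infdef}, for the case $m = 1-\nr/2$) it carries out exactly that: write the local vector fields in the two charts as Laurent-type expansions, change coordinates on the overlap, and solve the resulting linear system to extract $H^1$ together with an explicit cocycle basis $\{z^{-j}\delp{\zeta}\}$. You instead exploit the splitness of $\wm$ structurally: grading $\mc{T}_X$ by $\zeta$-degree (which is preserved under the transition $w = 1/z$, $\chi = z^{-m}\zeta$ precisely because $\zeta^2 = 0$, so no cross terms arise), you split off the two odd line bundles $\Pi\mc{O}_{\pr}(m)$ (spanned by $\delp{\zeta}$, transition $z^{-m}$) and $\Pi\mc{O}_{\pr}(2-m)$ (spanned by $\zeta\delp{z}$, transition $-z^{m-2}$), and trap the even piece in the extension $0 \to \mc{O}_{\pr} \to \mc{T}_X^{(0)} \to \mc{T}_{\pr} \to 0$, whose outer terms have vanishing $H^1$; the count $\max(0,-m-1)+\max(0,m-3)$ then follows from standard line-bundle cohomology on $\pr$. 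Your twists and signs check out against the paper's gluing \eqref{gluingfunctionforwp}, and your odd answer specializes (at $m = 1-\nr/2$) to the paper's basis of dimension $\nr/2-2$. What each approach buys: yours is more conceptual, works uniformly in $m$, and explains \emph{why} the even part vanishes identically; the paper's raw \v{C}ech computation is less illuminating but produces the explicit representatives $z^{-j}\delp{\zeta}$ that are used repeatedly downstream (in the gluing formulas \eqref{gluingfunction}, \eqref{gluingformula1} and in the proof of Theorem \ref{versal}), which your splitting gives only implicitly via the standard basis of $H^1(\pr,\mc{O}_{\pr}(m))$.
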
 
  
This is a straightforward \v{C}ech cohomology
computation.

\begin{remark} Skipping ahead, let $n = \nr$  be the number of Ramond punctures on a genus zero SUSY curve $\Sigma$. We will soon see that the weighted projective superspace $\wb$ is the supercurve underlying $\Sigma$. Note that the dimension $(0\,|\,\nr/2-2)$ of the space of first-order deformations matches the expected odd dimension of $\mnr$. 
\end{remark} 

\section{Weighted Projective Superspace} \label{weightsuperproj}

In this section, let $n = \nr \ge 4$ be an even number, which is meant to be the number of Ramond punctures in the sequel. The superscheme $\wy:= \wn = \Proj A$ for the $\Z$-graded superalgebra $A := k[u,v\,|\, \theta]$ with $u, v$ having degree 1 and $\theta$ having degree $1-n/2$ and supercommutativity taking into account only the $\Z_2$-grading, admits an affine covering $\{U(u \neq 0), V(v \neq 0)\}$, with
 \begin{align*}
 \phi_u:  \wy \vert_U & \lgr \on{Spec} k[z\, | \,  \zeta], \\
\nonumber v/u & \mapsto z, \\
\nonumber \theta u^{n/2-1} & \mapsto \zeta,
 \end{align*}
 and\
 \begin{align*}
 \phi_v: \wy \vert_V & \lgr  \on{Spec} k[w,\chi], \\
\nonumber  u/v & \mapsto w, \\
\nonumber \theta v^{n/2-1} & \mapsto \chi.
 \end{align*}
These charts are glued together over $U \cap V$ using the automorphism
\begin{align} \label{gluingfunctionforwp}
\phi_v \circ \phi_u^{-1}:\on{Spec} k[z \, | \,  \zeta] & \lgr \on{Spec} k[w \, | \, \chi], \\
\nonumber z  & \mapsto 1/w, \\ 
\nonumber \zeta & \mapsto  \chi w^{n/2-1}.
\end{align}

 v

\subsubsection*{Picard Group}

 For each $m \in \mathbb{Z}$, we define the line bundle $\mc{O}(m)$ on $\wy=\Proj A$ to be the invertible sheaf $A(m)^{\sim}$ associated with the degree-shifted graded $A$-module $A(m)$. 
To give an example, the line bundle $\mc{O}(1)$ on $\wy$ is generated by its global sections, $\Gamma(\wy, \mc{O}(1)) = A(1)^0 = A^1$, the elements of $A(1)$ of degree 0. One can choose the elements $\{ u,v, u^{n/2-i} v^i \theta \; | \; 0 \le i \le n/2 \} $ as a basis of $\Gamma(\wy, \mc{O}(1))$. Let $\on{Pic}(\wy)$ be the \emph{Picard group}, the group of isomorphism classes of rank-$(1|0)$, invertible sheaves on $\wy$.

\begin{lemma}
  \label{Picard}
$\on{Pic}(\wy) \cong \Z$. 
\end{lemma}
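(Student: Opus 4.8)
The plan is to identify $\on{Pic}(\wy)$ with the sheaf cohomology group $H^1(\wy, \mc{O}_{\wy}^*)$ of the sheaf of invertible functions (the usual transition-function description of line bundles, valid verbatim over superschemes) and to compute it by reducing modulo the nilpotent odd ideal. By Lemma \ref{scm} and Example \ref{wsp}, $\wy = \wn$ is split over its bosonic reduction $\pr$ with $\mc{O}_{\wy} = \mc{O}_{\pr} \oplus \mc{J}$, where $\mc{J} = \mc{O}_{\wy}^- \cong \Pi\,\mc{O}_{\pr}(n/2-1)$ by \eqref{wm-functions}, and $\mc{J}^2 = 0$ since $\wy$ is $(1|1)$-dimensional. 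In particular the even part of the structure sheaf is $\mc{O}_{\wy}^+ = \mc{O}_{\pr}$, with no even nilpotents. Because $\wy$ and $\pr$ share the same underlying topological space, all cohomology below is taken on $\pr$.

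First I would write down the short exact sequence of sheaves of abelian groups coming from reduction modulo $\mc{J}$,
\[ 1 \lr 1 + \mc{J} \lr \mc{O}_{\wy}^* \lr \mc{O}_{\pr}^* \lr 1. \]
Surjectivity holds because $\mc{J}$ is nilpotent (a unit mod $\mc{J}$ lifts to a unit), and the kernel consists of the units congruent to $1$ modulo $\mc{J}$. Since $\mc{J}^2 = 0$, the map $f \mapsto 1 + f$ is an isomorphism of sheaves of abelian groups from the additive sheaf $(\mc{J}, +)$ onto the multiplicative sheaf $1 + \mc{J}$, the cross term landing in $\mc{J}^2 = 0$. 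Then I would run the associated long exact cohomology sequence, whose relevant segment is
\[ H^1(\pr, \mc{J}) \lr H^1(\wy, \mc{O}_{\wy}^*) \lr H^1(\pr, \mc{O}_{\pr}^*) \lr H^2(\pr, \mc{J}). \]
Here $H^1(\wy, \mc{O}_{\wy}^*) = \on{Pic}(\wy)$ and $H^1(\pr, \mc{O}_{\pr}^*) = \on{Pic}(\pr) \cong \Z$. The decisive input is the vanishing $H^1(\pr, \mc{J}) = H^1(\pr, \mc{O}_{\pr}(n/2-1)) = 0$, which holds because $n \ge 4$ forces the degree $n/2 - 1 \ge 1 > -1$; and $H^2(\pr, \mc{J}) = 0$ since $\pr$ is a curve. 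The long exact sequence therefore collapses to an isomorphism $\on{Pic}(\wy) \lgr \on{Pic}(\pr) \cong \Z$, with the class of a line bundle recorded by the degree $m$ of its restriction to $\pr$; the generators are exactly the $\mc{O}(m)$ constructed above.

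I expect the only substantive point to be the vanishing $H^1(\pr, \mc{O}_{\pr}(n/2-1)) = 0$: this is where the hypothesis $n \ge 4$ enters, and conceptually it says the odd part of the structure sheaf is of high enough degree to contribute no new line bundles beyond those pulled back from $\pr$. Everything else is formal bookkeeping. As an explicit alternative fully in the spirit of Section \ref{weightsuperproj}, one can compute \v{C}ech cohomology directly on the two-chart cover $\{U, V\}$: a unit on $U \cap V$ has the form $c\,z^m(1 + \eta(z)\,\zeta)$ with $c \in k^*$, $m \in \Z$, and $\eta \in k[z,z^{-1}]$, the even factor $c\,z^m$ recording the class in $\on{Pic}(\pr) \cong \Z$ and the odd factor being trivializable precisely because the relevant \v{C}ech group is again $H^1(\pr, \mc{O}_{\pr}(n/2-1)) = 0$.
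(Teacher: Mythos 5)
Your cohomology computation is correct, but the step it rests on --- the identification $\on{Pic}(\wy) \cong H^1(\wy, \mc{O}_{\wy}^*)$ with $\mc{O}_{\wy}^*$ the sheaf of \emph{all} invertible functions --- is not ``valid verbatim'' over superschemes, and this is a genuine gap. A rank-$(1|0)$ invertible sheaf is locally isomorphic to $\mc{O}_{\wy}$ as a $\Z_2$-graded module, so its transition functions are parity-preserving automorphisms of $\mc{O}_{\wy}$, \emph{i.e}., \emph{even} invertible functions; hence $\on{Pic}(\wy) = H^1(\wy, (\mc{O}_{\wy}^+)^*)$, which is exactly what the paper uses. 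Your group $H^1(\wy, \mc{O}_{\wy}^*)$ classifies invertible sheaves of \emph{ungraded} $\mc{O}_{\wy}$-modules, and in general it is a different group: by your own exact sequence it is an extension of $H^1(\pr, \mc{O}_{\pr}^*) \cong \Z$ by $H^1(\pr, \mc{J})$, so on a weighted projective superspace $\wm$ with $m \ge 2$, where $\mc{J} \cong \Pi\mc{O}_{\pr}(-m)$ has $H^1 \neq 0$, it is strictly larger than $\on{Pic} \cong \Z$. On $\wy = \wn$ the two groups happen to coincide, but only \emph{because} of the vanishing $H^1(\pr, \mc{O}_{\pr}(n/2-1)) = 0$ that you establish downstream; as written, the opening sentence asserts as a general principle a coincidence that your own argument is needed to verify.

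The repair is immediate and makes the proof shorter than both yours and the paper's: since $\wy$ is a $(1|1)$-dimensional split supercurve, $\mc{J}^2 = (\mc{O}_{\wy}^-)^2 = 0$, so there are no even nilpotents and $\mc{O}_{\wy}^+ = \mc{O}_{\pr}$ on the nose; hence $(\mc{O}_{\wy}^+)^* = \mc{O}_{\pr}^*$ and $\on{Pic}(\wy) = H^1(\pr, \mc{O}_{\pr}^*) = \on{Pic}(\pr) \cong \Z$, with no exact sequence, no vanishing theorem, and no GAGA (the paper instead runs the analytic exponential sequence on $(\mc{O}_{\wy}^+)^*$; your nilpotent-thickening sequence is the natural algebraic substitute, but it computes the wrong sheaf's $H^1$ unless you restrict to even units, in which case its kernel $1 + (\mc{J}^2)^+$ is trivial and it degenerates). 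This also corrects the emphasis in your closing paragraph: the hypothesis $\nr \ge 4$ is not where anything decisive happens --- $H^1(\pr, \mc{O}_{\pr}(n/2-1)) = 0$ already holds for all $n \ge 0$, and with the correct identification of $\on{Pic}$ the lemma needs no positivity assumption at all.
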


\begin{proof}
Just like in classical geometry, the Picard group of a superscheme $X$
may be computed as the first cohomology group $H^1(X, (\mc{O}_X^+)^*)$
of the sheaf $(\mc{O}_X^+)^*$ of invertible even functions on $X$. As
always, this is easier to do if we engage Serre's GAGA principle and
use the sheaf of holomorphic functions on $\wy$ with complex topology,
as in \cite{witten2012notes}, because it allows us to consider the
exponential sequence $0 \to \Z \to \mc{O}_{\wy}^+ \to
(\mc{O}_{\wy}^+)^* \to 0$, to which we apply $H^0(\wy, -)$ and get the
long exact sequence
\begin{multline*}
 0 \longrightarrow H^0(\Z) \longrightarrow H^0(\mc{O}_{\wy}^+) \longrightarrow H^0((\mc{O}_{\wy}^+)^*) \longrightarrow H^1(\Z) \longrightarrow H^1(\mc{O}_{\wy}^+) \\
 \longrightarrow H^1((\mc{O}_{\wy}^+)^*) \overset{\on{deg}}{\longrightarrow} H^2(\Z) \cong \Z \longrightarrow 0.
 \end{multline*}
From \eqref{wm-functions}, we find  that $H^1(\mc{O}_{\wy}^+)=0$ and so $H^1((\mc{O}_{\wy}^+)^*) \cong \mathbb{Z}$.
\end{proof}

Let $\mc{L}$ be a line bundle on $\wy$ and $i^* \mc{L}$ denote its pullback by the inclusion $i: \pr \hr \wy$. 

\begin{corollary}
  \label{susybundleisontwo}
If $i^* \mc{L} \cong \mc{O}_{\pr}(m)$ for some $m \in \Z$, then
$\mc{L} \cong \mc{O}_{\wy}(m)$.
\end{corollary}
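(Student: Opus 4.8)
The plan is to deduce the statement directly from Lemma \ref{Picard} together with one geometric input: that the restriction along $i$ carries $\mc{O}_{\wy}(m)$ to $\mc{O}_{\pr}(m)$. Once this is established, the corollary becomes a purely formal consequence of the fact that both Picard groups are infinite cyclic. So the real work is confined to a transition-function computation, and everything else is elementary structure theory of homomorphisms of $\Z$.

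First I would record the chart description of $\mc{O}_{\wy}(m) = A(m)^{\sim}$. On the two charts $U = (u \neq 0)$ and $V = (v \neq 0)$ of the covering used in \eqref{gluingfunctionforwp}, this invertible sheaf is trivialized by the degree-$m$ monomials $u^m$ and $v^m$, so the transition cocycle over $U \cap V$ is the \emph{even} function $(u/v)^m = w^m$, a power of the even coordinate alone. The inclusion $i\colon \pr \hr \wy$ is the bosonic reduction, cut out by $\zeta = \chi = 0$; under $i$ the even coordinates $z,w$ restrict to the standard affine coordinates on $\pr$, and the cocycle $w^m$ restricts unchanged to the very cocycle defining $\mc{O}_{\pr}(m)$. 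Hence $i^* \mc{O}_{\wy}(m) \cong \mc{O}_{\pr}(m)$ for every $m \in \Z$, and in particular $\deg\bigl(i^*\mc{O}_{\wy}(m)\bigr) = m$.

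With this in hand I would finish formally. The assignment $m \mapsto \mc{O}_{\wy}(m)$ is a group homomorphism $\iota\colon \Z \to \on{Pic}(\wy)$, and composing with the degree isomorphism $\on{Pic}(\pr) \cong \Z$ the map $\deg \circ i^* \circ \iota\colon \Z \to \Z$ is the identity by the previous paragraph. By Lemma \ref{Picard} we may identify $\on{Pic}(\wy) \cong \Z$, so $\iota$ and $\deg \circ i^*$ are multiplication by integers $a$ and $b$ respectively with $ba = 1$; thus $a = b = \pm 1$. Consequently $\iota$ is an isomorphism (so every line bundle on $\wy$ is $\mc{O}_{\wy}(k)$ for a unique $k$) and $i^*$ is injective. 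Now given $\mc{L}$ with $i^*\mc{L} \cong \mc{O}_{\pr}(m) \cong i^*\mc{O}_{\wy}(m)$, injectivity of $i^*$ forces $\mc{L} \cong \mc{O}_{\wy}(m)$.

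The only non-formal step is the identification $i^*\mc{O}_{\wy}(m) \cong \mc{O}_{\pr}(m)$, and I expect this to be the single point worth stating carefully: the transition cocycle of $\mc{O}_{\wy}(m)$ involves the even coordinate only, so it survives restriction to the bosonic reduction without picking up odd corrections. Everything after that is the observation that a split injection of $\Z$ into itself is an isomorphism, together with the degree isomorphism on $\pr$.
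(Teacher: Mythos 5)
Your proposal is correct, and its skeleton is the same as the paper's: both proofs reduce the corollary to Lemma \ref{Picard} plus the single computation $i^*\mc{O}_{\wy}(m) \cong \mc{O}_{\pr}(m)$, which you do via the transition cocycle $w^m$ on the two standard charts and the paper does equivalently via graded modules, $i^*\mc{O}_{\wy}(m) = (A(m)_{\bos})^{\sim} = A_{\bos}(m)^{\sim} = \mc{O}_{\pr}(m)$. The one genuine difference is how the map $i^*\colon \on{Pic}(\wy) \to \on{Pic}(\pr)$ is handled. The paper simply asserts that $i^*$ is an isomorphism --- a fact that is really extracted from the \emph{proof} of Lemma \ref{Picard} (since $\wy$ is split, $(\mc{O}_{\wy}^+)^* = \mc{O}_{\pr}^*$ as sheaves on the common underlying space, so restriction identifies the two $H^1$'s), not from its statement. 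You instead use only the abstract statement $\on{Pic}(\wy) \cong \Z$ and recover what is needed (injectivity of $i^*$, plus surjectivity of $m \mapsto \mc{O}_{\wy}(m)$) by the split-injection argument: $\deg \circ\, i^* \circ \iota = \id_{\Z}$ forces both factors to be multiplication by $\pm 1$. This makes your proof self-contained relative to the lemma as stated, at the cost of a little extra bookkeeping; the paper's version is shorter but leans on an unproved (though easily justified) strengthening of the lemma. Both are valid.
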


\begin{proof}
Suppose $ i^* \mc{L} \cong \mc{O}_{\pr}(m)$. Since $i^*: \on{Pic} (\wy) \to \on{Pic} (\pr)$  is an isomorphism and
\[ i^*\mc{O}_{\wy}(m) = \mc{O}_{\wy}(m)_{\bos} = (A(m)_{\bos})^{\sim}  = A_{\bos} (m)^{\sim} = \mc{O}_{\pr}(m), \] 
we have
$\mc{L} \cong \mc{O}_{\wy}(m)$. 
\end{proof}

\begin{proposition} \label{otwodeformsuniquely}
The line bundle $\mc{O}_{\wy}(m)$ for each $m \in \Z$ deforms uniquely, up to isomorphism.
\end{proposition}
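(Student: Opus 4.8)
The plan is to realize the deformation in question as a base change to the universal deformation space and reduce the claim to a cohomology vanishing. By Theorem~\ref{versal} the supercurve $\wy$ has a universal deformation $Z \to S$, where $S$ is the affine superspace $\mathbb{A}^{0|\nr/2-2}$, a local Artinian $k$-superalgebra whose maximal ideal $\mathfrak{m}_S$ is generated by the (nilpotent, odd) coordinates of $S$ and whose central fibre $Z\times_S \Spec k$ is $\wy$. Since the topological space underlying $Z$ equals that of its bosonic reduction $\pr$, deforming $\mc{O}_{\wy}(m)$ amounts to lifting a class in $\on{Pic}(\wy)=H^1(\wy,(\mc{O}_{\wy}^+)^*)$ along the restriction $\on{Pic}(Z)\to\on{Pic}(\wy)$. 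I claim this restriction is an isomorphism, which gives both existence and uniqueness at once, and, by Lemma~\ref{Picard}, treats all $m$ simultaneously (the obstruction and torsor groups below are independent of which line bundle we deform).

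To prove this I would filter $\mc{O}_Z$ by the powers $\mathfrak{m}_S^j\mc{O}_Z$, a filtration that terminates because $\mathfrak{m}_S$ is nilpotent, obtaining a finite tower of square-zero thickenings $\wy=Z_0\hookrightarrow Z_1 \hookrightarrow \dots \hookrightarrow Z_N=Z$ whose $j$-th ideal sheaf is $\mc{I}_j:=\mathfrak{m}_S^j\mc{O}_Z/\mathfrak{m}_S^{j+1}\mc{O}_Z$ (square-zero since $\mc{I}_j^2\subseteq \mathfrak{m}_S^{2j}\mc{O}_Z/\mathfrak{m}_S^{j+1}\mc{O}_Z=0$ for $j\ge 1$). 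On each layer the even invertible functions congruent to $1$ form the group $1+\mc{I}_j^+$, isomorphic additively to the even part $\mc{I}_j^+$ because $\mc{I}_j^2=0$; the resulting short exact sequence $0\to \mc{I}_j^+ \to (\mc{O}_{Z_j}^+)^* \to (\mc{O}_{Z_{j-1}}^+)^* \to 0$ is the infinitesimal analogue of the exponential sequence used in Lemma~\ref{Picard}. Its long exact cohomology sequence reads $H^1(\pr,\mc{I}_j^+)\to \on{Pic}(Z_j)\to \on{Pic}(Z_{j-1}) \to H^2(\pr,\mc{I}_j^+)$, so it suffices to show $H^1(\pr,\mc{I}_j^+)=H^2(\pr,\mc{I}_j^+)=0$ for every $j$, after which $\on{Pic}(Z)\lgr\on{Pic}(\wy)$ follows by induction up the tower.

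The vanishing of $H^2$ is automatic since $\pr$ is a curve, so existence of the deformation is free and the content is the vanishing of $H^1$. For this I would compute $\mc{I}_j^+$ using flatness of $Z\to S$: the $\mathfrak{m}_S$-adic associated graded of $\mc{O}_Z$ is $\mc{O}_{\wy}\otimes_k \on{Gr}(\mc{O}_S)$, whence $\mc{I}_j\cong (\mathfrak{m}_S^j/\mathfrak{m}_S^{j+1})\otimes_k \mc{O}_{\wy}$. Since the odd coordinates of $S$ anticommute, $\mathfrak{m}_S^j/\mathfrak{m}_S^{j+1}$ is a purely even $k$-vector space when $j$ is even and purely odd when $j$ is odd. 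Combining this with $\mc{O}_{\wy}^+=\mc{O}_{\pr}$ and $\mc{O}_{\wy}^-=\Pi\,\mc{O}_{\pr}(\nr/2-1)$ from \eqref{wm-functions}, the even part of the tensor product is a direct sum of copies of $\mc{O}_{\pr}$ for even $j$ and of copies of $\mc{O}_{\pr}(\nr/2-1)$ for odd $j$. As $\nr\ge 4$ we have $\nr/2-1\ge 1>-1$, so both $H^1(\pr,\mc{O}_{\pr})$ and $H^1(\pr,\mc{O}_{\pr}(\nr/2-1))$ vanish; hence $H^1(\pr,\mc{I}_j^+)=0$ for all $j$, and the restriction map is an isomorphism as claimed.

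The main obstacle is the parity bookkeeping in the last step: one must verify that for odd $j$ the even part of $\mc{I}_j$ is built from the odd summand $\Pi\,\mc{O}_{\pr}(\nr/2-1)$ of $\mc{O}_{\wy}$ rather than from its even summand $\mc{O}_{\pr}$, because it is precisely this odd summand, together with the hypothesis $\nr\ge 4$, that forces the relevant bundle on $\pr$ to have degree $\ge -1$ and therefore vanishing $H^1$. For a general weight the odd layers would instead contribute negative-degree line bundles with $H^1\neq 0$, so the argument — and very possibly the uniqueness itself — would break down; this is the point at which the specific geometry of $\wy$ enters the proof.
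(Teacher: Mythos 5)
Your proof is correct, but it takes a genuinely different (and much more explicit) route than the paper's. The paper's entire proof is a two-line appeal to the general deformation theory of line bundles: deformations of $\mc{O}_{\wy}(m)$ are controlled by the cohomology of its endomorphism sheaf, which is $\mc{O}_{\wy}$ itself, and $H^1(\wy,\mc{O}_{\wy}) = H^2(\wy,\mc{O}_{\wy}) = 0$, so existence and uniqueness follow at once. Your square-zero tower and truncated exponential sequences are precisely the mechanism behind that black box, unwound by hand for the universal deformation $Z/S$: by flatness your layers satisfy $\mc{I}_j \cong (\mathfrak{m}_S^j/\mathfrak{m}_S^{j+1}) \otimes_k \mc{O}_{\wy}$, so your layer-wise vanishing $H^1(\pr,\mc{I}_j^+) = 0$ is exactly the vanishing of the relevant parity component of $H^1(\wy,\mc{O}_{\wy}) \otimes_k \mathfrak{m}_S^j/\mathfrak{m}_S^{j+1}$, given $\mc{O}_{\wy} = \mc{O}_{\pr} \oplus \Pi\mc{O}_{\pr}(\nr/2-1)$ from \eqref{wm-functions}. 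What your approach buys: it is self-contained, it makes visible exactly where $\nr \ge 4$ and the specific geometry of $\wy$ enter (the paper asserts the cohomology vanishing without computing it), and it delivers the conclusion in precisely the form it is later used, namely that $\on{Pic}(Z) \to \on{Pic}(\wy)$ is an isomorphism, so that any SUSY line bundle on $Z$ must be $\mc{O}_Z(-2)$. What the paper's approach buys: brevity and base-independence --- its formulation covers deformations over an arbitrary local Artinian superalgebra, whereas your argument as written treats only the tower inside $Z/S$. That restriction is harmless here (the deformation to $Z$ is the only one the paper ever invokes, and your filtration argument extends verbatim to any local Artinian base $R$, with the even part of $\mathfrak{m}_R^j/\mathfrak{m}_R^{j+1}$ pairing with $\mc{O}_{\pr}$ and the odd part with $\mc{O}_{\pr}(\nr/2-1)$, both with vanishing $H^1$), but it is worth flagging. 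Note also that citing Theorem \ref{versal} is not circular, even though Proposition \ref{otwodeformsuniquely} precedes it in the paper: the proof of Theorem \ref{versal} nowhere uses the proposition.
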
 

\begin{proof} 

The deformation theory of $\mc{O}_{\wy}(m)$ is controlled by the cohomology of its endomorphism sheaf, $\mc{O}_{\wy}$. Since
$H^2(\mc{O}_{\wy}) \linebreak[1] = \linebreak[0] H^1(\mc{O}_{\wy}) = 0$ it follows that there exists a unique, up to isomorphism, deformation of $\mc{O}_{\wy}(m)$.  
\end{proof}

\subsubsection*{Automorphisms} \label{automorphismofwy}

Recall that $\wy = \wn$ is the Proj superscheme $\Proj A$ of the graded superalgebra $A:=k[u,v \, | \,  \theta]$ with $\mathbb{Z}$-grading $|u|=|v|=1$ and $|\theta| \linebreak[2] = \linebreak[1] 1 \linebreak[0] - n/2$. If we forget the grading on $A$ and take corresponding affine superscheme $\Spec A$, we obtain the affine superspace $\mathbb{A}^{2|1}$.
\bigskip

\noindent \emph{General Strategy}. To describe the group superscheme $\Aut \wy$, we will follow the strategy of \cite{amrani1989classes}, where the automorphism groups of classical weighted projective spaces are computed.
Since $\wy$ is the GIT quotient $\mathbb{A}^{2|1}/\!\!/ \mathbb{G}_m$, where $\mathbb{G}_m$ acts on $u$ and $v$ with weight 1 and on $\theta$ with weight $1-n/2$, we may expect that 
\[ \awy = \Aut(A)/\Gamma^*, \] 
where $\Aut(A)$ is the supergroup of automorphisms of the homogeneous algebra $A$ and $\Gamma^*$ is some supergroup making up for the action of $\mathbb{G}_m$. 

We will first identify $\Gamma^*$ as a group superscheme and then find a short exact sequence of group superschemes:
\begin{equation} \label{hope} 1 \longrightarrow \Gamma^* \longrightarrow \Aut (A) \longrightarrow \awy \longrightarrow  0.  \end{equation}

The existence of such a short exact sequence would imply that $\Gamma^*$ is a normal subgroup of $\Aut(A)$ and $\awy = \on{Aut}(A)/\Gamma^*$.

We will now give functorial descriptions of the group superschemes appearing in  \eqref{hope}. 

\begin{definition}
\label{gamma}
Let $\Gamma^*$ denote the functor
\begin{align}
    \Gamma^*: \sS & \to \on{Group}, \\
   \nonumber T & \mapsto \Gamma((\mc{O}_{\wy \times T}^+)^*),
\end{align}
\end{definition} 

\begin{definition} Let  $\Aut (A)$ denote the functor
\begin{align*}
\Aut (A): \sS & \to \on{Group}, \\
\nonumber T & \mapsto \Aut_R (A \otimes_{k} R),
\end{align*}
where $R= \Gamma(T, \mc{O}_T)$. 
\end{definition}

\begin{definition} Let  $\awy$ denote the functor
\begin{align*} 
\awy: \sS & \to \on{Group} \\
\nonumber T & \mapsto \Aut_T (\wy \times T)
\end{align*} 
where $\Aut_T (\wy \times T)$ is the group of automorphisms of $\wy \times T$ over $T$. 
\end{definition}
Our next goal is to show that these functors are indeed representable.

\begin{lemma} The functor $\Aut (A)$ is represented by an affine group superscheme based on the superscheme
\begin{equation}
\label{AutA}
\on{Spec}k[a,b,c,d,e, e^{-1}, (ad-bc)^{-1} \, | \, \alpha_0, \cdots, \alpha_{n/2}, \beta_0, \cdots, \beta_{n/2}]
\end{equation}
of dimension $(5\, |\, n+2)$. 
\end{lemma}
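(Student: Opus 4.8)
The plan is to identify a $T$-point of $\Aut(A)$, i.e.\ an automorphism of the graded $R$-superalgebra $A \otimes_k R$ with $R = \Gamma(T, \mc{O}_T)$, with a tuple of structure constants, and then to recognize the resulting representing object as the localization of a free supercommutative algebra. First I would pin down the general shape of such an automorphism $\phi$. Since $\phi$ preserves both the $\Z$-grading and the $\Z_2$-grading, it is determined by the images of the generators $u, v, \theta$, and these images must lie in the corresponding graded pieces of $A \otimes_k R$. A direct monomial count shows that the degree-$1$ part of $A$ is spanned by the even monomials $u, v$ together with the odd monomials $\theta u^{n/2-i} v^i$ for $0 \le i \le n/2$, while the degree-$(1-n/2)$ part is spanned by $\theta$ alone (there are no even monomials of negative degree). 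Taking even-total-parity degree-$1$ elements and odd-total-parity degree-$(1-n/2)$ elements of $A \otimes_k R$, I would conclude that
\[
\phi(u) = au + bv + \sum_{i=0}^{n/2} \alpha_i\, \theta u^{n/2-i} v^i, \quad \phi(v) = cu + dv + \sum_{i=0}^{n/2} \beta_i\, \theta u^{n/2-i} v^i, \quad \phi(\theta) = e\theta,
\]
with $a,b,c,d,e \in R^+$ and $\alpha_i, \beta_i \in R^-$. Conversely, since $A$ is the free supercommutative $k$-algebra on the even generators $u, v$ and the odd generator $\theta$, any such assignment extends uniquely to a graded $R$-algebra endomorphism: the only relation to verify is $\phi(\theta)^2 = e^2\theta^2 = 0$, which holds automatically.

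Next I would determine exactly when such an endomorphism is invertible. The key observation is that $\phi^{-1}$, when it exists, is again a graded automorphism, so by the structural result above $\phi^{-1}(\theta) = e'\theta$; hence $\phi$ preserves the ideal $(\theta)$ and descends to a graded $R$-algebra automorphism $\bar\phi$ of $(A\otimes_k R)/(\theta) = R[u,v]$ sending $u \mapsto au + bv$, $v \mapsto cu + dv$. A graded automorphism of $R[u,v]$ is linear, so its matrix $\left(\begin{smallmatrix} a & b \\ c & d \end{smallmatrix}\right)$ is invertible, i.e.\ $ad - bc \in (R^+)^*$; and $\theta = \phi(\phi^{-1}(\theta)) = e'e\,\theta$ forces $e \in (R^+)^*$. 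For the converse I would construct the inverse explicitly: with $L = \left(\begin{smallmatrix} a & b \\ c & d \end{smallmatrix}\right)$ invertible and $e$ invertible, set $\phi^{-1}(\theta) = e^{-1}\theta$ and solve for $\phi^{-1}(u), \phi^{-1}(v)$ by first inverting the linear part via $L^{-1}$ and then correcting the odd terms; the correction terminates after a single step because $\theta^2 = 0$. Thus $\Aut(A)(T)$ is in natural bijection with the set of tuples $(a,b,c,d,e,\alpha_0,\dots,\alpha_{n/2},\beta_0,\dots,\beta_{n/2})$ in $(R^+)^5 \times (R^-)^{n+2}$ subject to $ad-bc, e \in (R^+)^*$.

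Finally I would read off representability. This bijection is precisely $\Hom_{k\text{-superalg}}(B, R)$ for
\[
B = k[a,b,c,d,e, e^{-1}, (ad-bc)^{-1} \, | \, \alpha_0, \dots, \alpha_{n/2}, \beta_0, \dots, \beta_{n/2}],
\]
the two inverted elements accounting for the two invertibility conditions, so that $\Aut(A) \cong \on{Spec} B$. Since localization introduces no new generators, the dimension is $(5 \mid n+2)$. The group structure comes from composition: substituting one tuple into another produces new structure constants that are polynomial in the two inputs, with only $ad-bc$ and $e$ appearing in denominators (already inverted in $B$), so multiplication, unit, and inverse are all morphisms of superschemes, making $\on{Spec} B$ an affine group superscheme.

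The main obstacle I anticipate is the second step: justifying that the odd corrections $\alpha_i, \beta_i$ cannot affect invertibility, so that the conditions are exactly $ad - bc, e \in (R^+)^*$ and no more, and verifying that the explicitly constructed inverse genuinely closes up using $\theta^2 = 0$ rather than requiring an infinite correction. The bookkeeping of degree-$1$ versus degree-$(1-n/2)$ graded pieces in the first step is routine but underpins this, as it is exactly what forces $\phi(\theta) = e\theta$ and hence the clean descent to $R[u,v]$.
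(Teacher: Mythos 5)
Your proof is correct and takes essentially the same route as the paper's: write down the $T$-points of $\Aut(A)$ explicitly and read off the representing affine superscheme. The paper declares the shape of the automorphisms and the representability ``immediate''; your monomial count in each graded piece and your explicit one-step inverse construction (with the precise unit conditions $ad-bc,\, e \in (R^+)^*$, where the paper only writes $ad-bc \neq 0$) supply exactly the details it omits.
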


\begin{proof}  The set of $T$-points of $\Aut(A)$ consists of the automorphisms
\begin{align}
\label{aut11}
u & \mapsto  au + bv + \theta  \sum_{i=0}^{n/2} \alpha_i u^{n/2-i}v^{i},  &  a, b \in R^+, \alpha_i \in R^-, \\
\nonumber v & \mapsto  c u + d v + \theta \sum_{j=0}^{n/2} \beta_j u^{n/2-j}v^j, & a',b' \in R^+, \beta_j \in R^-,\\
\nonumber  \theta & \mapsto  e \theta, & e  \in (R^+)^*,
\end{align}
where $R = \Gamma(T, \mc{O}_T)$ and $ad -bc \neq 0$.   It is immediate from the above description that, as a set-valued functor, $\Aut (A)$ is represented by the distinguished open subset $D(ad-bc)$ of the affine superscheme 
\begin{equation*}
 \on{Spec}k[a,b,c,d,e, e^{-1} \, | \, \alpha_0, \cdots, \alpha_{n/2}, \beta_0, \cdots, \beta_{n/2}].
\end{equation*}
\end{proof}

\begin{remark}
The Hopf superalgebra structure on the algebra of regular functions on the affine superscheme \eqref{AutA} can be described by composition of the automorphisms given in \eqref{aut11}. 
\end{remark}

The group superscheme $\Gamma^*$ is easier to describe explicitly. We will do that in terms of the standard multiplicative group $\mathbb{G}_m$ and the \emph{odd additive group} $\mathbb{G}_a^{0|1}$, which is a group superscheme representing the functor 
\begin{align*}
  \mathbb{G}_a^{0|1}:  \sS & \to \on{Group},\\
T &\mapsto \Gamma (T, \mc{O}_{T}^-),
\end{align*}
with the group law given by addition. The underlying superscheme of $\mathbb{G}_a^{0|1}$ is just $\mathbb{A}_k^{0 \, | \, 1}$.

\begin{lemma}
\label{Gamma*}
The functor $\Gamma^*$ is represented by the group superscheme $\mathbb{G}_m \times (\mathbb{G}_a^{0|1})^{n/2}$ of dimension $(1|n/2)$. 
\end{lemma}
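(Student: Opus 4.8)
The plan is to compute the group $\Gamma^*(T) = \Gamma((\mc{O}_{\wy \times T}^+)^*)$ of global invertible even functions explicitly and functorially in $T$, and then to recognize the resulting functor together with its group law. First I would determine the global \emph{even} functions on $\wy \times T$. Writing $R = \Gamma(T, \mc{O}_T)$, one can proceed chart by chart using the covering $\{U,V\}$ and the gluing \eqref{gluingfunctionforwp}: an even section restricts on $U$ to $a(z) + b(z)\zeta$ with $a \in R^+[z]$, $b \in R^-[z]$, and the matching condition $z = 1/w$, $\zeta = \chi w^{n/2-1}$ forces $a(z) = a_0$ to be a constant in $R^+$ and $b(z) = \sum_{i=0}^{n/2-1} b_i z^i$ with $b_i \in R^-$ (regularity at $w = 0$ caps the degree at $n/2 - 1$). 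Equivalently, and to guarantee this holds uniformly for all $T$, I would compute $\on{pr}_{2*}\mc{O}_{\wy \times T}$ by flat base change from $\wy \to \Spec k$ (every $T \to \Spec k$ is flat), using that $H^0(\wy, \mc{O}_{\wy}) = k \oplus \Pi k^{n/2}$, the odd part being $H^0(\pr, \Pi\mc{O}_{\pr}(n/2-1))$. This identifies the even global functions with $R^+ \oplus (R^-)^{\oplus n/2}$, i.e.\ with expressions $f = a_0 + \sum_{i=0}^{n/2-1} b_i z^i \zeta$.

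Next I would isolate the invertible ones. The odd-coefficient part $N = (\sum_i b_i z^i)\zeta$ squares to zero, hence is nilpotent, so $f = a_0 + N$ is a unit if and only if its constant term $a_0$ is, i.e.\ $a_0 \in \Gamma((\mc{O}_T^+)^*)$. Thus as a set-valued functor $\Gamma^*(T) = \{(a_0, b_0, \dots, b_{n/2-1}) : a_0 \in \Gamma((\mc{O}_T^+)^*),\ b_i \in \Gamma(\mc{O}_T^-)\}$, which is exactly $\mathbb{G}_m(T) \times (\mathbb{G}_a^{0|1}(T))^{n/2}$; this yields representability by the stated $(1|n/2)$-dimensional superscheme.

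It remains to match the group law, which is the one delicate point. Multiplying $f = a_0 + N$ by $f' = a_0' + N'$ gives $a_0 a_0' + (a_0 N' + a_0' N)$ since $N N' = 0$; in the naive coordinates $(a_0, b_i)$ this reads $b_i'' = a_0 b_i' + a_0' b_i$, a \emph{semidirect}-product law rather than a manifest direct product. I would therefore argue structurally: the constant-term map $p : \Gamma^* \to \mathbb{G}_m$, $f \mapsto a_0$, is a surjective homomorphism split by the inclusion of constants $\mathbb{G}_m \hookrightarrow \Gamma^*$, and its kernel $\{1 + N\}$ is isomorphic to $(\mathbb{G}_a^{0|1})^{n/2}$ because $(1+N)(1+N') = 1 + (N + N')$. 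Since the image of the splitting consists of even scalars, it is central, so conjugation acts trivially on the kernel; hence the extension is an internal direct product and $\Gamma^* \cong \mathbb{G}_m \times (\mathbb{G}_a^{0|1})^{n/2}$ as group superschemes (concretely, the trivializing isomorphism rescales the odd coordinates by $b_i \mapsto a_0^{-1} b_i$).

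The main obstacle is precisely this last step: in the natural function coordinates the group structure presents as a semidirect product, and one must observe that the $\mathbb{G}_m$-factor acts by scalars, hence centrally, in order to conclude that the splitting is a genuine direct product. A secondary technical point is ensuring that the section-level computation is valid for arbitrary (non-affine, non-reduced, or odd) test superschemes $T$; this is handled by the flat base-change identification of $\on{pr}_{2*}\mc{O}_{\wy \times T}$ rather than by a naive computation over a single affine $R$.
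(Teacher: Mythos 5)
Your proof is correct and takes essentially the same route as the paper: a chart-by-chart (\v{C}ech) computation identifying the invertible even global functions on $\wy \times T$ with pairs $(a_0,(b_i))$, where $a_0$ is an invertible even constant and the $b_i$ are odd coefficients of $\zeta z^i$, $0 \le i \le n/2-1$. The only difference is that the paper writes elements in the multiplicative normal form $a_0\bigl(1 + \zeta \sum_{i=0}^{n/2-1} \beta_i z^i\bigr)$ from the outset --- exactly your rescaling $b_i \mapsto a_0^{-1} b_i$ --- so the direct-product group law is manifest and the split-central-extension step (your ``main obstacle'') never arises; note in any case that the centrality you invoke is automatic, since even functions in a supercommutative ring commute and hence $\Gamma^*(T)$ is abelian outright.
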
 

\begin{proof} Given an affine superscheme $T= \on{Spec} R$, we may compute the group $H^0((\mc{O}_{\wy \times T}^+)^*)$ explicitly using \v{C}ech cohomology, and describe its elements in local coordinates $(z \, | \, \zeta)$ on $U$ as 
\begin{equation} \label{element} a_0 \left(1 + \zeta \sum_{i=0}^{
n/2-1} \beta_i z^i \right)  \end{equation} 
with $a_0 \in (R^+)^*$ and $\beta_i \in R^{-}$. 
The multiplication on the $T$-points of $\Gamma^*$ is given by
\begin{equation}\label{mult} a_0 \left(1+ \theta \sum_{i=0}^{n/2-1} \beta_i z^i \right)\cdot a_0' \left(1+ \zeta \sum_{i=0}^{n/2-1} \beta_i' z^i \right) = a_0 a_0'\left( 1 +  \zeta \sum_{i=0}^{n/2-1} (\beta_i + \beta_i') z^i \right) \end{equation}  
with $a_0, a_0' \in (R^+)^*$ and $\beta_i, \beta_i' \in R^{-}$. The lemma follows from Equations \eqref{element} and \eqref{mult}.
\end{proof}

\begin{lemma} The group superscheme $\Gamma^*$ is a  group subsuperscheme of $\Aut (A)$.

\end{lemma}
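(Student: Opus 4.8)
The plan is to exhibit an explicit morphism of group functors $\Phi \colon \Gamma^* \to \Aut(A)$ and to show it is represented by a closed immersion of superschemes compatible with the group laws. The construction comes from the GIT picture $\wy = \mathbb{A}^{2|1}/\!\!/\mathbb{G}_m$: a global invertible even function on $\wy \times T$ pulls back to a degree-$0$ invertible element $g$ of $A \otimes R$ (where $R = \Gamma(T,\mc{O}_T)$), and rescaling each homogeneous generator by the appropriate power of $g$ produces a graded automorphism of $A \otimes R$ which induces the identity on $\wy \times T$. Concretely, writing an element of $\Gamma^*(T)$ in the local form \eqref{element}, its pullback is $g = a_0\bigl(1 + \theta \sum_{i=0}^{n/2-1}\beta_i u^{n/2-1-i} v^i\bigr) \in (A\otimes R)_0$, and I set $\Phi(g)$ to be the automorphism $u \mapsto g u$, $v \mapsto g v$, $\theta \mapsto g^{1-n/2}\theta$.

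First I would check that $\Phi(g)$ genuinely lands in $\Aut(A)$. Since $g$ has degree $0$, the images $gu, gv$ lie in $(A\otimes R)_1$ and $g^{1-n/2}\theta \in (A\otimes R)_{1-n/2}$, so $\Phi(g)$ is a graded algebra endomorphism; it is invertible with inverse $\Phi(g^{-1})$ because $g$ is a unit. Expanding $g$ and matching with \eqref{aut11} identifies $\Phi(g)$ with the $T$-point having $a = d = a_0$, $b = c = 0$, $e = a_0^{1-n/2}$, $\alpha_i = a_0\beta_i$ with $\alpha_{n/2} = 0$, and $\beta_{j}^{\mathrm{aut}} = a_0\beta_{j-1}$ with $\beta_0^{\mathrm{aut}} = 0$; here I use $\theta^2 = 0$ to compute $g^{1-n/2} = a_0^{1-n/2}\bigl(1 + (1-n/2)\theta\sum \beta_i u^{n/2-1-i}v^i\bigr)$ and hence $g^{1-n/2}\theta = a_0^{1-n/2}\theta$.

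Next I would verify that $\Phi$ is a homomorphism. The crucial observation is that $\Phi(g)$ fixes every degree-$0$ element of $A\otimes R$: on a monomial $\theta u^a v^b$ with $a+b = n/2-1$ one computes $\Phi(g)(\theta u^a v^b) = g^{1-n/2+a+b}\theta u^a v^b = \theta u^a v^b$. Hence $\Phi(g)(g') = g'$ for any $g' \in \Gamma^*(T)$, and therefore $\Phi(g)\circ\Phi(g')(u) = \Phi(g)(g'u) = g' g u = \Phi(gg')(u)$, and likewise on $v$ and $\theta$; since by Lemma \ref{Gamma*} the group $\Gamma^* \cong \mathbb{G}_m \times (\mathbb{G}_a^{0|1})^{n/2}$ is abelian, no issue of composition order arises. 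Finally, to see that $\Phi$ is a closed immersion I would read off the corresponding map of coordinate rings from \eqref{AutA} to $k[a_0, a_0^{-1}\,|\,\beta_0,\dots,\beta_{n/2-1}]$: it sends $a, d \mapsto a_0$, $b,c\mapsto 0$, $e \mapsto a_0^{1-n/2}$, $\alpha_i \mapsto a_0\beta_i$, and $\beta_j^{\mathrm{aut}} \mapsto a_0\beta_{j-1}$. This map is surjective (each $a_0^{\pm1}$ and each $\beta_i$ is hit, the latter as the image of $a^{-1}\alpha_i$), so $\Phi$ is a closed immersion; combined with the homomorphism property this realizes $\Gamma^*$ as a group subsuperscheme of $\Aut(A)$, cut out by the equations $b = c = 0$, $a = d$, $a^{n/2-1}e = 1$, $\alpha_{n/2} = 0$, $\beta_0^{\mathrm{aut}} = 0$, and $\alpha_i = \beta_{i+1}^{\mathrm{aut}}$.

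The main obstacle is the bookkeeping in the first two steps rather than any deep difficulty: one must confirm that invertible even global functions really pull back to degree-$0$ units of $A\otimes R$, so that the naive ``scale the generators'' recipe preserves $A$ and is independent of the chart, and one must track the co-action carefully to be sure $\Phi$ respects multiplication. Both reduce to the elementary identity $g^{1-n/2}\theta = a_0^{1-n/2}\theta$ and to the fact that $\Phi(g)$ acts trivially on degree-$0$ elements, which is precisely the manifestation of $\Gamma^*$ being the ``gauge'' subgroup that collapses under $\Proj$.
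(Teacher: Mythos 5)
Your proposal is correct and follows essentially the same route as the paper: both identify a $T$-point $g$ of $\Gamma^*$, written in the form \eqref{element} and lifted to homogeneous coordinates, with the graded automorphism $u \mapsto gu$, $v \mapsto gv$, $\theta \mapsto g^{1-n/2}\theta = a_0^{1-n/2}\theta$ of $A \otimes R$, using $\theta^2 = 0$. The paper stops at writing down this identification (its display \eqref{A}), whereas you additionally verify the homomorphism property via the triviality of the action on degree-zero elements and check surjectivity of the induced map on coordinate rings to get a closed immersion --- details the paper leaves implicit.
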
 

\begin{proof} We can describe the each element of the group $\Gamma^*(T)$ explicitly by  \begin{equation} r_0 \left(1 + \theta \sum_{i=0}^{n/2-1} \beta_i v^i u^{n/2-1-i} \right) \end{equation} with $r_0 \in (R^+)^*$ and $\beta_i \in R^{-}$  
and identify each such element with the automorphism in $\Aut (A \otimes R)$ sending 
\begin{align}
\label{A}
 u & \mapsto r_0(1 + \theta \sum_{i=0}^{n/2-1} \beta_i  v^i u^{n/2-i-1})u ,\\
\nonumber v  & \mapsto r_0(1 + \theta \sum_{i=0}^{n/2-1} \beta_i  v^i u^{n/2-i-1}) v,\\
\nonumber \label{C}\theta & \mapsto r_0^{1-n/2} (1 + \theta \sum_{i=0}^{n/2-1} \beta_i  v^i u^{n/2-i-1})^{1-n/2} \theta\\
\nonumber
& \quad = r_0^{1-n/2} \theta. \qedhere
\end{align}
\end{proof} 

\begin{theorem} \label{rep}  The functor $\awy$ is represented by the group superscheme $\on{Aut}(A)/\Gamma^*$ of dimension $(4 \, | \, n/2+2)$. 
\end{theorem}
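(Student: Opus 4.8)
The plan is to realize $\awy$ as a quotient by exhibiting a short exact sequence of group-valued functors
\[ 1 \lr \Gamma^* \lr \Aut(A) \overset{\Phi}{\lr} \awy \lr 1 \]
and then invoking representability of the quotient. First I would construct $\Phi$: a $T$-point of $\Aut(A)$ is a graded $R$-algebra automorphism of $A \otimes_k R$, where $R = \Gamma(T, \mc{O}_T)$, and applying $\Proj$ to it yields an automorphism of $\Proj(A \otimes_k R) = \wy \times T$ over $T$, i.e.\ a $T$-point of $\awy$; this assignment is natural in $T$. To identify $\ker \Phi$, I would work on the standard chart $U$ with coordinates $(z \,|\, \zeta)$, $z = v/u$ and $\zeta = \theta u^{n/2-1}$, and impose that the automorphism induced by a general element \eqref{aut11} be the identity. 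After clearing denominators and using $\theta^2 = 0$, the equations $z \mapsto z$ and $\zeta \mapsto \zeta$ force $b = c = 0$, $d = a$, $e = a^{1-n/2}$, $\beta_0 = \alpha_{n/2} = 0$, and $\beta_j = \alpha_{j-1}$; these are precisely the relations defining the copy of $\Gamma^*$ inside $\Aut(A)$ exhibited in \eqref{A}. Hence $\ker \Phi = \Gamma^*$, and in particular $\Gamma^*$ is normal.

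The main obstacle is surjectivity, i.e.\ showing that $\Phi$ is an epimorphism of fppf sheaves. Given $g \in \awy(T)$, the line bundle $g^* \mc{O}(1)$ is ample fibrewise and hence restricts to $\mc{O}(1)$ on every fibre of $\pi$, so by Lemma \ref{Picard} and the seesaw principle $g^* \mc{O}(1) \cong \mc{O}(1) \otimes \pi^* L$ for some line bundle $L$ on $T$. Choosing a trivialization of $L$, which exists Zariski-locally on $T$, I may assume $g^* \mc{O}(1) \cong \mc{O}(1)$, and since $\mc{O}(1)$ generates $\on{Pic}(\wy) \cong \Z$ (Corollary \ref{susybundleisontwo}) this gives $g^* \mc{O}(m) \cong \mc{O}(m)$ for all $m$. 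Then $g$ acts on the section spaces $H^0(\wy \times T, \mc{O}(m))$, in particular on the generators $u, v \in H^0(\mc{O}(1))$ and $\theta \in H^0(\mc{O}(1 - n/2))$ of $A \otimes_k R$; because the only relation in $A$ is $\theta^2 = 0$, this determines a graded $R$-algebra endomorphism of $A \otimes_k R$, invertible since $g$ is, and mapping to $g$ under $\Phi$. Thus $\Phi$ is surjective Zariski-locally on the base, hence an fppf epimorphism.

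Finally, I would conclude. We have a flat closed normal group subsuperscheme $\Gamma^* \hr \Aut(A)$ together with an fppf epimorphism $\Phi$ whose kernel is $\Gamma^*$; by the theory of quotients of affine group superschemes the fppf quotient $\Aut(A)/\Gamma^*$ is represented by an affine group superscheme, and, since $\awy$ is itself an fppf sheaf, $\Phi$ descends to an isomorphism $\awy \cong \Aut(A)/\Gamma^*$. The dimension is then computed by subtraction, $(5 \,|\, n+2) - (1 \,|\, n/2) = (4 \,|\, n/2 + 2)$, as claimed. I expect the surjectivity step to be the crux, since it is where the global geometry of $\wy$ enters — the computation of its Picard group and the fact that an automorphism preserves $\mc{O}(1)$ up to a twist pulled back from the base — whereas the kernel computation and the final descent are comparatively formal.
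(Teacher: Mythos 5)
Your proposal follows essentially the same route as the paper: the same map $\Aut(A) \to \awy$, the same identification of the kernel with $\Gamma^*$ (the paper does the computation in homogeneous coordinates rather than in the chart $U$, with the same resulting relations), and the same surjectivity argument via $g^*\mc{O}(1) \cong \mc{O}(1) \otimes \pi^*L$, Zariski-local trivialization of $L$, and the induced action on the homogeneous coordinate ring $\bigoplus_m H^0(\mc{O}(m)) \cong A \otimes R$. The only differences are cosmetic: the paper pins down the twist $l=1$ by comparing ranks of pushforwards rather than fibrewise degrees, and it leaves the representability of the fppf quotient implicit where you invoke it explicitly.
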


\begin{proof} 

When mentioning elements of group superschemes and points of superschemes, we will be talking about $T$-points thereof without specifying the ``probe'' superscheme $T$.

Every automorphism of the superalgebra $A=k[u,v\, | \, \theta] $ induces an automorphism of $\wy$ so that we have a group superscheme map $ \Aut(A) \to \awy$.  
To prove the theorem, it suffices to show that the kernel of the above map is isomorphic to $\Gamma^*$ and that the map is surjective.

Lets us start with identifying the kernel. Referring to the description \eqref{aut11} of the action of $\Aut (A)$, we observe that a point of $\wy$ with homogeneous coordinates $[u:v \, | \, \theta]$ maps to itself by an automorphism of $A$, if and only if it is given by
\begin{align*}
u & \mapsto  ( a + \theta  \sum_{i=0}^{n/2-1} \alpha_i u^{n/2-i-1}v^{i} ) u, \\
v & \mapsto  ( a + \theta  \sum_{i=0}^{n/2-1} \alpha_i u^{n/2-i-1}v^{i}) v,\\
 \theta & \mapsto  ( a + \theta  \sum_{i=0}^{n/2-1} \alpha_i u^{n/2-i-1}v^{i})^{1-n/2} \theta.
\end{align*}
This easily identifies with \eqref{A}, which describes the inclusion of $\Gamma^*$ in $\Aut (A)$.

Since group scheme quotients and surjective morphisms are usually well-defined in the fppf topology, we will use the same formalism in the category of superschemes. Thus, to show the surjectivity of the group superscheme morphism $\Aut(A) \to \awy$, we need to show that the morphism of sheaves  in the fppf topology on $\sS$ defined by the corresponding functors of points $T \mapsto \Aut(A)(T)$ and $T \mapsto \awy(T)$ is surjective. Since the fppf topology is finer than the Zariski topology on $\sS$, it will be enough to show the surjectivity in the Zariski topology. This means that for every $k$-superscheme $T$ and every automorphism $\alpha$ of $\wy \times T$ over $T$, we will need to find a Zariski open covering $\mc{U} =\{ U_i \}$ of $T$ such that for all $i$ the restriction $\alpha|_{U_i}$ is in the image of $\Aut(A)(U_i) \to \awy(U_i)$.

Denote $\wy \times T$ by $\wy_T$ and let $\on{pr}_1: \wy \times T \to \wy$ and $\on{pr}_2: \wy \times T \to T$ be the canonical projections. Consider the diagram 

\begin{equation}
\label{diagram1}
\begin{tikzcd}
\wy_T \arrow[rd, "\on{pr}_2"] & \wy_T \arrow[l,"\alpha"'] \arrow[d, "\on{pr}_2"] \arrow[r, "\on{pr}_1"] & \wy  \arrow[d, "\pi"] \\
                       & T  \arrow[r, "f"] &   \on{Spec} k  .
\end{tikzcd} 
\end{equation}

Every line bundle on $\wy_T$ is isomorphic to $\prn^* \mc{O}_{\wy}(l) \otimes \prt^*\mc{L}$ for some $l \in \Z$ and a line bundle $\mc{L}$ on the base $T$ (Lemma \ref{Picard}). 
Therefore, there exists an isomorphism
\begin{equation}\label{isom} \sigma: \alpha^* \prn^* \mc{O}(1)  \lgr \prn^* \mc{O}(l) \otimes \prt^*\mc{L}. \end{equation}
By comparing the ranks of the pushforwards of the above line bundles to $T$, we find that $l=1$. 

Let $\mc{U} =\{ U_i \}$ denote a Zariski open covering of $T$ on which the line bundle $\mc{L}$ in \eqref{isom} trivializes. Thus, over each $U_i$, we obtain an isomorphism
\begin{equation*}
\sigma_i: \alpha_i^* \prn^* \mc{O}(1)  \lgr \prn^* \mc{O}(1), \end{equation*}
where $\alpha_i := \alpha|_{U_i}$.
From that we also get $\mc{O}_{\wy \times U_i}$-module isomorphisms
\[ \sigma_i^{\otimes m }: \alpha_i^* \prn^* \mc{O}(m) \lgr \prn^* \mc{O}(m), \qquad m \in \Z, \]
which sum up to an isomorphism
\[
\bigoplus_m \sigma_i^{\otimes m }: \bigoplus_m \alpha_i^* \prn^* \mc{O}(m) \lgr \bigoplus_m \prn^* \mc{O}(m)
\]
of sheaves of $\Z$-graded $\mc{O}_{\wy \times U_i}$-superalgebras. By adjunction, we have an isomorphism
\[
\bigoplus_m  \prn^* \mc{O}(m) \lgr (\alpha_i)_*\left(\bigoplus_m \prn^* \mc{O}(m)\right).
\]
Since the triangle in \eqref{diagram1} with $T$ replaced by $U_i$ commutes, when we apply cohomology, $(\alpha_i)_*$ disappears, and we get a $k$-algebra isomorphism
\begin{equation}
    \label{iso}
\bigoplus_m  H^0(\prn^* \mc{O}(m)) \lgr \bigoplus_m H^0(\prn^* \mc{O}(m)) .
\end{equation}

Note that the $\Z$-graded superalgebra $A=k[u,v\, | \, \theta] $ is equal to the $\mathbb{Z}$-graded superalgebra
\[ \bigoplus_{m \in \Z} H^0(\mc{O}_{\wy}(m))   H^0(\mc{O}_{\wy}(1-n/2 + j))
, \]
the grading given by $m$. More generally, we have a canonical isomorphism of graded $R_i$-su\-per\-al\-ge\-bras: 
\[ A \otimes R_i = \bigoplus_{m \in \Z} H^0(\on{pr}_1^*\mc{O}_{\wy}(m)),  \]
where $R_i := \Gamma(U_i, \mc{O}_{U_i})$. Then the isomorphism \eqref{iso} gives an $R_i$-isomorphism
\begin{equation}
\label{auto}
A \otimes R_i \lgr A \otimes R_i ,
\end{equation}
which may be viewed as an element of $\Aut (A) (R_i)$. This automorphism induces $\alpha_i$, because $\alpha_i$ is determined by the images of the homogeneous coordinates $u,v \in H^0(\prn^* \mc{O}_{\wy}(1))$ and $\theta \in H^0(\prn^* \mc{O}_{\wy}(1-n/2))$ and these images are determined by \eqref{iso}, equivalent to \eqref{auto}. This concludes the proof of surjectivity.
\end{proof}

\section{Super Riemann Surfaces} 

A \emph{super Riemann surface} (SUSY curve) $\Sigma$ is a supercurve $X$ equipped with a \emph{supersymmetry operator}. Physically, a supersymmetry operator is defined as a square root of the Hamiltonian operator generating time translation. 
Mathematically, a supersymmetry operator is formalized as a local generating section of an \emph{odd distribution}, \emph{i.e}., a subbundle $\mc{D}$ of the tangent bundle $\mc{T}X$. Such a section must be an \emph{odd, maximally nonintegrable} vector field $D$ on $X$ such that $D^2 = \frac{1}{2} [D, D]= \delpz$, where $z$ is an even local coordinate on $X$ and where the brackets denote the supercommutator. Note that we can recover the physical definition of a supersymmetry operator by interpreting the vector field $\delpz$ as generating time translation. 

To get used to the definition of a super Riemann surface, it is helpful to consider the $(1|1)$-dimensional complex superspace $\C^{1|1}$ with coordinates $(z \, | \, \zeta)$. An example of an odd, maximally nonintegrable vector field  on $\C^{1|1}$ is given by  \[ D:=\delpzeta + \zeta \delpz \] 

We summarize this discussion with the following coordinate-free definition of a family of super Riemann surfaces, 

 \begin{definition}[Super Riemann Surface] \label{SRS} A \emph{family $\Sigma = (X/T, \mc{D})$  of genus $g$ super Riemann surfaces over a superscheme $T$} is the data of
  
  \begin{enumerate}[$(1)$]
      \item a smooth, proper morphism $\pi: X \to T$ of superschemes of relative dimension $(1|1)$ with genus $g$ fibers $X_{t}$ and
      
      \item a rank-$(0|1)$ subbundle $\mc{D} \subset \mc{T}_ {X/T}$
  \end{enumerate}
satisfying the condition:
\smallskip

\noindent
      The supercommutator $[ \ , \ ]$ of vector fields, composed with the natural projection defines an isomorphism
      \begin{equation}
      \label{commutator}
  \mc{D}^{\otimes 2} \overset{[ \ , \ ]}{\longrightarrow} \mc{T}_{X/T} \to \mc{T}_{X/T}/\mc{D}.        
      \end{equation}
The subbundle $\mc{D}$ is called an \emph{odd, maximally nonintegrable distribution} or a \emph{SUSY structure}.

 \end{definition}
 
 \begin{definition}[SUSY-Isomorphism] A SUSY-isomorphism $\Sigma/T \lgr \Sigma'/T$ is  a $T$-isomorphism $\phi: X \lgr X'$ of supercurves such that $\phi^* \mc{D}'=\mc{D}$. 
\end{definition}
 
 \begin{remark} We write $\Sigma$ for a super Riemann surface over $\on{Spec} k$ and omit the word family when discussing a family of Riemann surface $\Sigma/T$. 
 \end{remark}

Note that the composition isomorphism \eqref{commutator} is automatically $\mc{O}_X$-linear, unlike the supercommutator map itself.
Thus, we get a short exact sequence of vector bundles:
  \begin{equation} \label{definingsequence}
      0 \lr \mc{D} \hookrightarrow \mc{T}_{X/T} \lr \mc{D}^{\otimes 2} \lr 0. 
  \end{equation}

One can easily show that the above example of an odd, maximally nonintegrable vector field  $D$ on $\C^{1|1}$ is, in a way, canonical: 

\begin{quote}
For every super Riemann surface $\Sigma$ over $k$, there exist local coordinates $(z \, | \, \zeta)$ on $X$, called \emph{superconformal coordinates}, such that the odd distribution $\mc{D}$ in these coordinates is generated by the vector field
\[
D_{\zeta} = \delpzeta + \zeta \delpz.
\] 
\end{quote}

This statement can be generalized to families $\Sigma/T$ using \emph{generalized coordinates} locally in the \'etale topology on $T$, cf.\ \cite[Section 3.3]{bruzzo2021supermoduli}.

  In the absence of odd functions on $T$, a super Riemann surface $\Sigma/T$ is
  equivalent to a \emph{spin curve}, which is a curve equipped with a spin structure.

    \begin{definition}[Spin Curve] \label{spin} A genus $g$ family of spin curves $(\mc{C}/T, \mc{L}, j)$ over an ordinary scheme $T$ is the data of
  
  \begin{enumerate}[$(1)$]
      \item a smooth, proper morphism $\pi: \mc{C} \to T$ of schemes of relative dimension $1$ with genus $g$ fibers $\mc{C}_{t}$, 
      
      \item a line bundle $\mc{L}$, and
      
      \item an isomorphism $j: \mc{L}^{\otimes 2} \lgr \Omega_{\mc{C}/T}^1$. 
  \end{enumerate}
   An isomorphism of spin curves $(\mc{C}/T, \mc{L},j) \lgr (\mc{C}'/T, \mc{L}',j')$ is a pair $(f, \phi)$ with $f: C \lgr C'$ an isomorphism of schemes over $T$ and $\phi: f^*(\mc{L}') \lgr \mc{L}$ an isomorphism of line bundles on $\mc{C}$ compatible with $j$ and $f^*(j')$.  \end{definition}

\begin{proposition}
\label{hodge}
A super Riemann surface $\Sigma = (X/T, \mc{D})$ over an ordinary scheme $T$ determines a spin curve. Conversely, every spin curve gives rise to a super Riemann surface.
\end{proposition}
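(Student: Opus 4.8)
The plan is to exhibit this correspondence as a relative version of the classical dictionary between theta characteristics and odd involutive structures, exploiting that over a purely even base a supercurve is automatically split. First I would record the splitting. Since $T$ is an ordinary scheme and $\pi\colon X\to T$ has relative dimension $(1|1)$, the same argument as in Lemma~\ref{scm} shows that $\mc{J}=\mc{O}_X^-$ with $\mc{J}^2=(\mc{O}_X^-)^2=0$; hence $\mc{O}_X=\mc{O}_{\mc{C}}\oplus\mc{O}_X^-$, where $\mc{C}:=X_{\bos}\to T$ is a family of genus-$g$ curves and $\mc{O}_X^-$ is a rank-$(0|1)$ line bundle on $\mc{C}$, so that $X$ is relatively split.

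For the forward direction I would pull the defining sequence \eqref{definingsequence} back along $i\colon\mc{C}\hookrightarrow X$. As it is a short exact sequence of vector bundles, $i^*$ keeps it exact, and separating even and odd parts identifies the even quotient with $\mc{T}_{\mc{C}/T}$ and the sub with the odd line bundle $i^*\mc{D}$. The restricted composition \eqref{commutator} then becomes an isomorphism $(i^*\mc{D})^{\otimes 2}\lgr\mc{T}_{\mc{C}/T}$, concretely $\partial_\zeta\otimes\partial_\zeta\mapsto\tfrac12[D_\zeta,D_\zeta]\vert_{\mc{C}}=\delpz$ in superconformal coordinates. Setting $\mc{L}:=(i^*\mc{D})^\vee$ (an ordinary line bundle, with parity forgotten) and letting $j$ be the dual isomorphism gives $j\colon\mc{L}^{\otimes 2}\lgr\Omega^1_{\mc{C}/T}$, that is, the spin curve $(\mc{C}/T,\mc{L},j)$.

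For the converse, given $(\mc{C}/T,\mc{L},j)$ I would form the split supercurve $X$ with $\mc{O}_X=\mc{O}_{\mc{C}}\oplus\Pi\mc{L}$ and then manufacture $\mc{D}$. Étale-locally on $T$ I choose a coordinate $z$ on $\mc{C}$ and a frame $\zeta$ of $\mc{L}$; the isomorphism $j$ lets me select transition functions $\rho_{\alpha\beta}$ for $\mc{L}$ whose squares are the transition functions $f'_{\alpha\beta}$ of $\Omega^1_{\mc{C}/T}$, and these automatically satisfy the cocycle condition. A direct computation (the identity $f'=\rho^2$) shows that the local odd fields $D_\zeta=\delpzeta+\zeta\delpz$ transform into scalar multiples of one another under these gluings, hence patch to a global rank-$(0|1)$ subbundle $\mc{D}\subset\mc{T}_{X/T}$; maximal nonintegrability holds because $\tfrac12[D_\zeta,D_\zeta]=\delpz$ spans $\mc{T}_{X/T}/\mc{D}$, so \eqref{commutator} is an isomorphism. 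That the two assignments are mutually inverse on objects is a bookkeeping of line bundles, and a SUSY-isomorphism $\phi$ restricts to an isomorphism $\phi_{\bos}$ of the reductions together with an isomorphism of the associated theta characteristics compatible with $j$, matching Definition~\ref{spin}; conversely such a pair lifts uniquely, so the correspondence is in fact an equivalence of groupoids.

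I expect the only real obstacle to be the gluing in the converse: the content of the statement is precisely that the half-transition-functions $\rho_{\alpha\beta}$ must form a cocycle, which is exactly the datum of a square root $\mc{L}$ of $\Omega^1_{\mc{C}/T}$ together with the identification $j$. Without a spin structure the naive choices of $\sqrt{f'_{\alpha\beta}}$ fail the cocycle condition and no global SUSY structure exists; this is the conceptual heart, and everything else is routine once relative superconformal coordinates are taken étale-locally over $T$ as in \cite{bruzzo2021supermoduli}.
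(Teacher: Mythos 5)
Your proof is correct, and its forward direction coincides with the paper's up to duality: the paper dualizes the defining sequence \eqref{definingsequence} and tensors with $\mc{O}_X/\mc{J}$, then compares parities to extract $\mc{J} \cong \mc{D}_{\bos}^{\vee}$ and $j\colon (\mc{D}_{\bos}^{\vee})^{\otimes 2} \lgr \Omega^1_{X_{\bos}/T}$, whereas you restrict the tangent sequence and dualize at the end --- the same argument in mirror image. Where you genuinely diverge is the converse. You build $\mc{D}$ chart by chart: pick local coordinates and frames, use $j$ to choose half-transition functions $\rho_{\alpha\beta}$ with $\rho_{\alpha\beta}^2 = f'_{\alpha\beta}$, and verify that $D_\zeta = \delpzeta + \zeta \delpz$ transforms by the invertible scalar $\rho_{\alpha\beta}$ and hence glues; this is the classical cocycle argument, and your computation (using $\zeta^2 = 0$ and $f' = \rho^2$) is right. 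The paper instead constructs $\mc{D}$ globally and coordinate-freely: on the split supercurve $X = (\mc{C}, \mc{S}(\Pi\mc{L}))$, the bundle $\Pi\mc{L}^\vee$ maps into $\mc{T}_{X/T}$ in two ways --- the vertical identification $\Pi\mc{L}^\vee = \mc{J}^\vee = (\mc{T}_{X/T})^-_{\bos}$ and the $j$-twisted map $\Pi\mc{L}^\vee \to \Pi\mc{L} \otimes \mc{T}_{\mc{C}/T} = \mc{J}(\mc{T}_{X/T})^+_{\bos}$ --- and $\mc{D}$ is the image of the $\mc{O}_X$-linear extension of their sum. In local coordinates that sum is exactly your $\delpzeta + \zeta\delpz$, so the two constructions yield the same distribution; the paper's version buys independence of coordinate choices and dispenses with any cocycle check (globality is automatic), while yours makes visible precisely where the square-root datum enters and why the construction fails without it, a point the paper leaves implicit. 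Your closing claim that the correspondence is an equivalence of groupoids goes beyond what Proposition \ref{hodge} asserts (that refinement is closer to the content of Theorem \ref{Rhodge}), but it is correct and harmless.
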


\begin{proof} Dualizing the short exact sequence \eqref{definingsequence} of super vector bundles and then tensoring it with the $\mc{O}_X$-module $\mc{O}_X/\mc{J}$ produces the short exact sequence \begin{equation} \label{pot} 0 \longrightarrow (\mc{D}_{\bos}^{\vee})^{\otimes 2} \longrightarrow \Omega_{X_{\bos}/T}^1 \oplus \mc{J} \longrightarrow \mc{D}_{\bos}^{\vee} \longrightarrow 0  \end{equation} 
of $\mc{O}_{X_{\bos}}$-modules.

Comparing parities, we find that $ \mc{J} \cong \mc{D}_{\bos}^{\vee}$ and $j: (\mc{D}_{\bos}^{\vee})^{\otimes 2} \lgr \Omega_{X_{\bos}/T}^1$, where $j$ is the canonical isomorphism induced by the supercommutator, see \eqref{commutator}.
The triple $(X_{\bos}/T, \Pi \mc{D}_{\bos}^{\vee}, j)$ is a spin curve over $T$. 

To prove the converse, given a spin curve $(\mc{C}/T, \mc{L}, j)$, take the split supercurve $X = (\mc{C}, \mc{S} (\Pi \mc{L}))$ over $T$. The $(0|1)$-rank super vector bundle $\Pi \mc{L}^\vee$ over $X_{\bos} = \mc{C}$ becomes a subbundle of the rank-$(1|1)$ bundle $\mc{T}_{X/T}^-$ over $X_{\bos}$ in two ways: $\Pi \mc{L}^\vee = \mc{J}^\vee = (\mc{J}/\mc{J}^2)^\vee = (\mc{T}_{X/T})^-_{\bos}$ and $\Pi \mc{L}^\vee \xrightarrow{\Pi \mc{L} \otimes (j^{-1})^\vee} \Pi \mc{L} \otimes \mc{T}_{\mc{C}/T} = \mc{J}(\mc{T}_{X/T})^+_{\bos}$. The sum of these two maps defines an inclusion
\[
\Pi \mc{L}^\vee \hookrightarrow (\mc{T}_{X/T})^-_{\bos} \oplus \mc{J} (\mc{T}_{X/T})^+_{\bos}  = \mc{T}_{X/T}^- \subset \mc{T}_{X/T},
\]
of an $\mc{O}_{X_{\bos}}$-module into an $\mc{O}_{X}$-module. This gives an inclusion of $\mc{O}_X$-modules:
\[
\mc{O}_X \otimes_{\mc{O}_{X_{\bos}}} \Pi \mc{L}^\vee \hookrightarrow \mc{T}_{X/T},
\]
whose image defines a SUSY structure $\mc{D}$ on $X/T$.

\end{proof}

\subsubsection*{Ramond and Neveu-Schwarz Punctures}

On a super Riemann surface we consider two distinct types of punctures. One type, called \emph{Neveu-Schwarz (NS) punctures}, is entirely analogous to the marked points one sees in the ordinary moduli theory of curves. The other type of punctures, called \emph{Ramond punctures}, corresponds to divisors along which the SUSY structure degenerates. Both types of punctures arise as nodes when one considers the compactification of the moduli space of SUSY curves and the moduli space $\spin$ of spin curves. The Ramond nodes are the ramification points of the map $\ov{\mc{S} \mc{M}}_g \to \ov{\mc{M}}_g$
that forgets the spin structure. Both types of punctures play a role in superstring theory in relation to string vertex operators.

\begin{definition}[Family of SRSs]
\label{srswithpunctures} A \emph{family of genus $g$ super Riemann surfaces with $\nr$ Ramond and $\ns$ Neveu-Schwarz (NS) punctures} $(X/T, \{s_i\}_{i=1}^{\ns}, \mc{R}, \mc{D})$ is the data of
\begin{enumerate}[$(1)$]
    \item a smooth, proper morphism $\pi: X \to T$ of superschemes of relative dimension $(1|1)$ with genus $g$ geometric fibers $X_t$, 
    
    \item a sequence of disjoint sections $s_i: T \to X $, $i = 1, \dots, \ns$,
    
    \item an unramified relative effective Cartier divisor $\mc{R}$ of degree $\nr$, called the \emph{Ramond divisor} and whose components are labeled and called the \emph{Ramond punctures}, and
    
    \item a rank-$(0|1)$ subbundle $\mc{D} \subset \mc{T}_{X/T}$ such that the composition of the supercommutator morphism with the natural inclusion and projection
    $$
    \mc{D}^{\otimes 2} \hookrightarrow \mc{T}_{X/T}^{\otimes 2}\overset{[ \ , \ ] }{\lr} \mc{T}_{X/T} \to  \mc{T}_{X/T}/\mc{D}
    $$
    defines an $\mc{O}_X$-module isomorphism
    $$
     \mc{D}^{\otimes 2} \to \left(\mc{T}_{X/T}/\mc{D} \right) (-\mc{R})
    $$
    of $\mc{D}^{\otimes 2}$ with the subsheaf $ \left(\mc{T}_{X/T}/\mc{D} \right) (-\mc{R})$ of  $\mc{T}_{X/T}/\mc{D}$.
\end{enumerate}
An \emph{isomorphism} of families of punctured super Riemann surfaces over $T$ is an isomorphism of underlying supercurves respecting the data $(\{s_i\}_{i=1}^{\ns}, \mc{R}, \mc{D})$.
\end{definition}

Therefore, in the presence of Ramond punctures, the short exact sequence in \eqref{definingsequence} is replaced with
\begin{equation} \label{ramondefiningsequence}
0 \lr \mc{D} \lr \mc{T}_{X/T} \lr \mc{D}^{\otimes 2}(\mc{R}) \lr 0. 
\end{equation}

The supermoduli problem is best approached from an equivalent ``dualized perspective'' on the definition of a family of super Riemann surfaces, as follows.

\begin{definition}[Family of SRSs: Dual Definition]
\label{SRSdef}

A \emph{family of genus $g$, $\nr+\ns$-punctured super Riemann surfaces over $T$} is the data $\Sigma = (X/T, \{s_i\}_{i=1}^{\ns}, \mc{R}, \mc{G})$  of

\begin{enumerate} [$(1)$]
\item  a smooth, proper morphism of $\pi: X \to T$ of superschemes of relative dimension $(1|1)$ with genus $g$ fibers $X_{t}$,

 \item $\ns$ distinct sections $s_i: T \to X $, 

\item a relative effective Cartier divisor $\mc{R}$ of degree $\nr$, unramified over $T$,  with labeled components, and

\item a locally free, rank-$(0|1)$ quotient $\Omega_{X/T}^1 \twoheadrightarrow \mc{G}$ such that $\mc{L}:= \on{ker}(\Omega_{X/T}^1 \twoheadrightarrow \mc{G})$ is a rank-$(1|0)$ subbundle $\mc{L} \subset \Omega_{X/T}^1$ (automatic by the projective dimension argument or the condition \eqref{max-int} below) over $X$ such that 

the composition
\[
\mc{L} \hookrightarrow \Omega_{X/T}^1 \overset{d}{\lr} \Omega_{X/T}^2 \lr (\mc{G})^{\otimes 2},
\]
where $d$ is the relative de Rham differential, establishes an isomorphism
\begin{equation}
\label{max-int}
\mc{L} \xrightarrow{\sim} (\mc{G})^{\otimes 2}(-\mc{R})
\end{equation}
onto the submodule
\[
(\mc{G})^{\otimes 2}(-\mc{R}) \subset (\mc{G})^{\otimes 2}.
\]
\end{enumerate}
As common in supergeometry, we will often drop the word "family" and talk about a \emph{super Riemann surface over $T$}. We will also refer to the structure of a family of super Riemann surfaces with punctures on a given family $X/T$ of supercurves as a \emph{superconformal} or \emph{SUSY structure on $X/T$}.

An \emph{isomorphism} of families of super Riemann surfaces over $T$ is an isomorphism of underlying supercurves respecting the SUSY structures.
\end{definition}

An example of a vector field $D_{\zeta}$ generating the subbundle $\mc{D}$ near a Ramond puncture is $D_{\zeta} = \delpzeta + z \zeta \delpz$, where $z$ will define the Ramond divisor $\mc{R}$ locally.
In fact, this example is canonical in the following sense, see \cite[Proposition 3.6]{bruzzo2021supermoduli}: 

\begin{quote}
    For every $\nr$-punctured super Riemann surface $\Sigma$ over a base $T$, \'{e}tale locally on the base $T$ and Zariski locally on $X$, near a closed point of the Ramond divisor $\mc{R}$, there exist local coordinates $(z\, | \, \zeta)$ on $X$ such that $\mc{D}$ in these coordinates is generated by the odd vector field
\[
D_{\zeta} = \delpzeta + z \zeta \delpz
\]
and $\mc{R}$ is defined by the function $z$.
\end{quote}

Punctures on  $X/T$ are closed subsuperschemes of relative codimension $(1|1)$, while a divisors is defined to have codimension $(1|0)$. If we take  $X$ to be a super Riemann surface with \textbf{no} Ramond punctures, then there exists the following duality between prime divisors and punctures on $X$:
For each puncture $p \in X$, issue an odd curve in the direction of $\mc{D}_p$ to produce a closed subscheme of relative dimension $(0|1)$ passing through $p$. Conversely, an odd curve will contain a unique point $p$ at which the tangent vector to the curve is colinear to the fiber $\mc{D}_p$ of the odd distribution $\mc{D}$. Intuitively, the odd curve could otherwise be an integral curve of the odd distribution $\mc{D}$, which is supposed to be nonintegrable. However, there exists no such duality at the components of the Ramond divisor $\mc{R}$, because in the Ramond case, the odd distribution fails to be nonintegrable exactly along the Ramond divisor. Indeed, in coordinates $(z \, | \, \zeta)$ in which $\mc{D}$ is generated by $D_{\zeta}$ as above, the Ramond divisor is given by $z = 0$, and at every point $(0 \, | \, \zeta_0)$ of it, the vector field $D_{\zeta}$ equals $\delpzeta$, which is tangent to the Ramond divisor. Thus, the condition that the tangent space to the Ramond divisor coincides with the fiber of the odd distribution $\mc{D}$ does not determine a unique point of the Ramond divisor. This means that the term a ``Ramond \textbf{puncture}'' is actually a bit of a misnomer --- it is rather a component of the Ramond \textbf{divisor}!

\subsubsection*{Relationship to Spin Curves}

  \begin{definition}[$n$-Punctured Spin Curve \cite{jarvis2000geometry}] A \emph{family of genus $g$, $\textbf{n}$-punctured spin curves
  $(\mc{C}/T, \linebreak[0] \{s_i\}_{i=1}^n, \mc{L},j)$ of type $\tbf{m}=(m_1, \dots, m_n)$} with $m_i$ being non-negative integers 
  is the data of 
  \begin{enumerate}[$(1)$]
      \item a smooth, proper morphism $\pi: \mc{C} \to T$ of schemes of relative dimension $1$ with genus $g$ fibers $C_{t}$, 
      
      \item a sequence of $n$ distinct sections $s_i : T \to \mc{C}$, 
      
      \item a line bundle $\mc{L}$, and 
      
      \item an isomorphism $j: \mc{L}^{\otimes 2} \lgr \Omega_{\mc{C}/T}^1(m_1p_1+ \dots + m_np_n)$, where the $p_i$'s denote the images of the sections $s_i$ in $\mc{C}$. 
  \end{enumerate}
  An isomorphism of $n$-punctured spin curves is an isomorphism of curves that respects the data $(\{s_i\}_{i=1}^n, \mc{L}, j)$. 
  \end{definition}

We will now establish a dictionary between punctured spin curves and super Riemann surfaces. 
  
  \begin{theorem} \label{Rhodge} A genus $g$ super Riemann surface over an ordinary scheme $T$ with $\nr$ Ramond punctures and $\ns$ NS punctures is equivalent to a genus $g$, $(\ns + \nr)$-punctured spin curve of type $\tbf{m}=(0, \dots 0 \, | \, 1, \dots, 1)$, where the left-hand side of the parentheses denote the types on the first $\ns$  punctures and the right-hand side denotes the types on the remaining $\nr$ punctures.
  \end{theorem}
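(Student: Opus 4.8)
The plan is to run the argument of Proposition~\ref{hodge} essentially verbatim, the only change being that the twisted defining sequence \eqref{ramondefiningsequence} replaces the untwisted one \eqref{definingsequence}; this makes the Ramond divisor $\mc{R}$ reappear as a twist in the spin isomorphism, while the Neveu-Schwarz sections pass inertly to ordinary marked points. Throughout I use that over an ordinary (purely even) base $T$ every $(1|1)$-supercurve is split, by the same reasoning as in Lemma~\ref{scm} (here $\mc{J}=\mc{O}_X^-$ is square-zero, so $\mc{O}_X=\mc{O}_{X_{\bos}}\oplus\mc{O}_X^-$ as algebras).

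First I would treat the forward direction. Given a super Riemann surface $(X/T,\{s_i\}_{i=1}^{\ns},\mc{R},\mc{D})$ over an ordinary scheme $T$, set $\mc{C}:=X_{\bos}$. Since $T$ is purely even, each NS section $s_i\colon T\to X$ factors through the closed immersion $X_{\bos}\hr X$, giving marked points $p_1,\dots,p_{\ns}$ on $\mc{C}$; and $\mc{R}$, being unramified with labeled components, restricts to $\nr$ further sections $p_{\ns+1},\dots,p_{\ns+\nr}$ with $\mc{R}=p_{\ns+1}+\dots+p_{\ns+\nr}$ as a divisor on $\mc{C}$. Dualizing \eqref{ramondefiningsequence} yields
\[
0 \lr (\mc{D}^\vee)^{\otimes 2}(-\mc{R}) \lr \Omega_{X/T}^1 \lr \mc{D}^\vee \lr 0,
\]
and tensoring with $\mc{O}_X/\mc{J}$ exactly as in the derivation of \eqref{pot} produces the short exact sequence of $\mc{O}_{\mc{C}}$-modules
\[
0 \lr (\mc{D}_{\bos}^\vee)^{\otimes 2}(-\mc{R}) \lr \Omega_{\mc{C}/T}^1 \oplus \mc{J} \lr \mc{D}_{\bos}^\vee \lr 0.
\]
Splitting into even and odd parts as before, the odd part gives $\mc{J}\cong\mc{D}_{\bos}^\vee$, and the even part gives an isomorphism $(\mc{D}_{\bos}^\vee)^{\otimes 2}(-\mc{R})\lgr\Omega_{\mc{C}/T}^1$, equivalently $j\colon\mc{L}^{\otimes 2}\lgr\Omega_{\mc{C}/T}^1(\mc{R})$ with $\mc{L}:=\Pi\mc{D}_{\bos}^\vee$. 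As $\mc{R}$ is supported on $p_{\ns+1},\dots,p_{\ns+\nr}$ with multiplicity one and the NS points $p_1,\dots,p_{\ns}$ contribute nothing, this is precisely a spin structure of type $\tbf{m}=(0,\dots,0\,|\,1,\dots,1)$.

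For the converse, starting from a punctured spin curve $(\mc{C}/T,\{s_i\}_{i=1}^{\ns+\nr},\mc{L},j)$ of type $\tbf{m}$, so $j\colon\mc{L}^{\otimes 2}\lgr\Omega_{\mc{C}/T}^1(\mc{R})$ with $\mc{R}:=p_{\ns+1}+\dots+p_{\ns+\nr}$, I would form the split supercurve $X=(\mc{C},\mc{S}(\Pi\mc{L}))$, retain $s_1,\dots,s_{\ns}$ as NS sections, and take $\mc{R}$ as the Ramond divisor. The construction of $\mc{D}$ from Proposition~\ref{hodge} is repeated, but now $j$ carries the twist: the composite $\Pi\mc{L}^\vee\xrightarrow{\Pi\mc{L}\otimes(j^{-1})^\vee}\Pi\mc{L}\otimes\mc{T}_{\mc{C}/T}(-\mc{R})$ lands in the subsheaf twisted down by $\mc{R}$, so the resulting distribution $\mc{D}$ is maximally nonintegrable away from $\mc{R}$ and degenerates exactly along $\mc{R}$; that is, $\mc{D}$ satisfies \eqref{ramondefiningsequence} rather than \eqref{definingsequence}. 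This gives back a super Riemann surface with $\nr$ Ramond and $\ns$ NS punctures.

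Finally I would check that the two constructions are mutually inverse and carry isomorphisms to isomorphisms: an isomorphism of super Riemann surfaces restricts to one of the bosonic spin data, and conversely an isomorphism of spin data lifts (uniquely) to the split supercurves and respects the SUSY structures since these are built compatibly from $j$ and $j'$; this uses splitness over the ordinary base. The one point requiring genuine care --- and the main obstacle --- is the bookkeeping of the twist: one must verify that the degeneration locus of $\mc{D}$ matches the multiplicity-one twist at the Ramond punctures while the NS punctures contribute trivially, and that the labeled components of the Ramond divisor correspond bijectively to the last $\nr$ labeled sections of the spin curve.
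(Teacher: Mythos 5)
Your proposal is correct and takes essentially the same approach as the paper: the paper's own proof consists of the single sentence that it ``follows the same lines as that of Proposition \ref{hodge},'' and your write-up carries out exactly that adaptation, replacing \eqref{definingsequence} by the twisted sequence \eqref{ramondefiningsequence} so that the Ramond divisor reappears as the twist $\mc{L}^{\otimes 2} \lgr \Omega^1_{\mc{C}/T}(\mc{R}_{\bos})$ while the NS sections pass to untwisted marked points. If anything, your version supplies more detail (the bookkeeping of sections, labels, and the mutual-inverse check) than the paper records.
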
 
  
  The type $\tbf{m}=(0, \dots 0\, | \, 1, \dots, 1)$ implies that the spin curve associated to a super Riemann surface has $\ns + \nr$ marked points, but that only the Ramond type points are used to \emph{twist} the spin structure. Here twisting means that $\mc{L}^{\otimes 2} \lgr \Omega^1(\mc{R}_{\bos})$, where the $\mc{R}_{\bos}$ denotes the bosonic reduction 
  of the Ramond divisor $\mc{R}$. 
  
  \begin{proof}[Proof of Theorem \ref{Rhodge}] The proof follows the same lines as that of Proposition \ref{hodge}. 
  \end{proof}

  \section{The Moduli Problem}

\begin{definition}
Let $\mnr$ denote the fibered category over $\sS$ with fiber over $T$ the groupoid $\mnr(T)$ whose objects are families $\Sigma/T$ of genus zero SUSY curves with $\nr \ge 4$ labeled Ramond punctures over $T$ and with morphisms isomorphisms of such families. \end{definition}

Let $\Sigma \in \mnr(\on{Spec} k)$. We can recover the supercurve $X$ over $k$ underlying $\Sigma$ by forgetting the superconformal structure.

\begin{proposition}
  \label{susyw}
Let $\Sigma \in \mnr(\on{Spec} k)$ and let $X$ be its underlying
supercurve. Then, $X \cong \wy = \mathbb{W}\mathbb{P}^{1|1}(1,1\; |\;
1-\nr/2)$.
\end{proposition}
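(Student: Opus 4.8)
The plan is to first use Lemma \ref{scm} to reduce the statement to a single degree computation. That lemma gives $X \cong \wm$ for a unique $m \in \Z$ and exhibits $X$ as split, with bosonic reduction $X_{\bos} \cong \pr$ and odd part $\mc{O}_X^- = \mc{J} \cong \Pi\,\mc{O}_{\pr}(-m)$, cf.\ \eqref{wm-functions}. So the entire content of the proposition is the identity $m = 1 - \nr/2$, and I would extract $m$ from the SUSY-with-Ramond data by restricting everything to $X_{\bos}$.

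To do this I would mimic the parity argument from the proof of Proposition \ref{hodge}, now in the Ramond setting. Since the base $\Spec k$ is purely even, dualizing the defining sequence \eqref{ramondefiningsequence} yields the SUSY line-bundle sequence $0 \to \mc{L} \to \Omega^1_X \to \mc{G} \to 0$ with $\mc{G} = \mc{D}^\vee$ and $\mc{L} = \mc{G}^{\otimes 2}(-\mc{R})$, the latter being exactly the maximal non-integrability isomorphism \eqref{max-int} of Definition \ref{SRSdef}. Tensoring with $\mc{O}_X/\mc{J} = \mc{O}_{X_{\bos}}$ and splitting by parity, just as in \eqref{pot}, the even summand of $(\Omega^1_X)_{\bos}$ is $\Omega^1_{X_{\bos}} = \Omega^1_{\pr}$ and the odd summand is $\mc{J}$. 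Because $\mc{L}$ is even and $\mc{G}$ is odd, this identifies $\mc{L}_{\bos} \cong \Omega^1_{\pr} \cong \mc{O}_{\pr}(-2)$ and $\mc{G}_{\bos} \cong \mc{J} \cong \mc{O}_{\pr}(-m)$ as line bundles on $\pr$.

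The degree count then finishes the argument. Restricting the Ramond isomorphism $\mc{L} \cong \mc{G}^{\otimes 2}(-\mc{R})$ to $X_{\bos}$ gives $\mc{L}_{\bos} \cong (\mc{G}_{\bos})^{\otimes 2}(-\mc{R}_{\bos})$, and $\mc{R}_{\bos}$ has degree $\nr$. Taking degrees yields $-2 = -2m - \nr$, hence $m = 1 - \nr/2$, so $X \cong \wy$ by Example \ref{wsp}. (Equivalently, over the ordinary base $\Spec k$ one may invoke Theorem \ref{Rhodge}: $\Sigma$ becomes an $\nr$-punctured genus-zero spin curve with spin bundle $\mc{G}_{\bos} \cong \mc{O}_{\pr}(-m)$ satisfying $(\mc{G}_{\bos})^{\otimes 2} \cong \Omega^1_{\pr}(\mc{R}_{\bos})$, which produces the same equation.)

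I expect the only real subtlety to be the parity bookkeeping: confirming that the odd part of $(\Omega^1_X)_{\bos}$ is precisely the splitting datum $\mc{J}$, so that $\mc{G}_{\bos}$ — a priori defined from the SUSY structure — is forced to equal $\mc{O}_{\pr}(-m)$, and keeping the orientation of the $(-\mc{R})$ twist and the $\Pi$ shifts straight through the degree computation. Everything else is routine bookkeeping that parallels Proposition \ref{hodge}.
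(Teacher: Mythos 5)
Your proof is correct, and its engine is the same as the paper's --- restrict the defining sequence of the SUSY structure to $X_{\bos}$ and split by parity --- but the execution differs in two genuine ways. First, you work with the dualized sequence $0 \to \mc{L} \to \Omega^1_X \to \mc{G} \to 0$ of Definition \ref{SRSdef}, whereas the paper restricts the tangent sequence \eqref{str-SES} and obtains $\mc{D}_{\bos} \cong \mc{J}^\vee$ together with $\mc{D}_{\bos}^{\otimes 2} \cong \mc{O}_{\pr}(2) \otimes \mc{O}_{\pr}(-\mc{R}_{\bos})$; these are mirror images of one another, so this difference is cosmetic. Second, and more substantively, you invoke the classification statement of Lemma \ref{scm} up front, so that only the integer $m$ remains to be determined, and you extract it from the degree equation $-2 = -2m - \nr$. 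The paper instead uses only the splitness established inside the proof of Lemma \ref{scm} and then \emph{constructs} an isomorphism $X \cong \wy$ from the choices $(\phi, c, d)$, including a square root $\sqrt{\mc{O}_{\pr}(2-\nr)} \lgr \mc{O}_{\pr}(1-\nr/2)$. Your route is shorter and choice-free: the uniqueness in Lemma \ref{scm} does the work that the explicit square root does in the paper. What the constructive version buys is visible in Corollary \ref{-2}, which the paper explicitly says follows ``not so much from the statement of Proposition \ref{susyw} but rather from its proof'': the identifications produced along the way are reused to pin down $\mc{D} \cong \Pi \mc{O}_{\wy}(1-\nr/2)$ and $\mc{T}_X/\mc{D} \cong \mc{O}_{\wy}(2)$. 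Your argument recovers that corollary too, but with one extra line: from $\mc{G}_{\bos} \cong \mc{J}$ and $\mc{L}_{\bos} \cong \Omega^1_{\pr}$ one gets $\mc{D}_{\bos} \cong \mc{J}^\vee \cong \Pi \mc{O}_{\pr}(1-\nr/2)$, and Corollary \ref{susybundleisontwo} lifts this identification from $\pr$ to $\wy$.
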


\begin{proof}  
Since $X$ is split (see the proof of Lemma \ref{scm}),
the structure sequence
\begin{equation}
  \label{str-SES}
0 \to \mc{D} \to \mc{T}_X \to  \mc{T}_X / \mc{D} \to 0
\end{equation}
may be identified, after restriction to $X_{\bos}$, with a short exact
sequence of $\mc{O}_{X_{\bos}}$-modules:
\begin{equation}
\label{cansen}
0 \lr \mc{D}_{bos} \hookrightarrow \mc{T}_{X_{\bos}} \oplus \mc{J}^{\vee} \lr \mc{D}_{\bos}^{\otimes 2}(\mc{R}_{\bos}) \lr 0.
\end{equation}
Note that the arrows in the above sequence are canonical in the sense that they are part of the definition of a SUSY structure. 

Next, choose an isomorphism $\phi: \pr \lgr X_{\bos}$ and use it to pull back \eqref{cansen} to the short exact sequence of $\mc{O}_{\pp^1}$-modules
\begin{equation*} 
  0 \lr \phi^* \mc{D}_{\bos} \hookrightarrow \mc{O}_{\pr}(2) \oplus
\phi^* \mc{J}^{\vee} \lr (\phi^* \mc{D}_{\bos})^{\otimes 2}( \phi^{-1}
\mc{R}_{\bos}) \lr 0 .
\end{equation*}
By comparing the parity of the modules in the above sequence, we see
that there exist canonical isomorphims
\[
\phi^* \mc{D}_{\bos} \lgr \phi^* \mc{J}^{\vee}
\]
and
\[
(\phi^* \mc{D}_{\bos})^{\otimes 2} \lgr \mc{O}_{\pr}(2) \otimes
\mc{O}_{\pr}(-\phi^{-1} \mc{R}_{\bos}).
\]
Choosing an isomorphism $c:
\mc{O}_{\pr}(2) \otimes \mc{O}(-\phi^{-1} \mc{R}_{\bos}) \lgr
\mc{O}_{\pr}(2-\nr)$ and a square root
\[
d: \sqrt{\mc{O}_{\pr}(2-\nr)} \lgr \mc{O}_{\pr}(1-\nr/2)
\]
we get an isomorphism
\[
\phi^* \mc{J}^{\vee} \lgr \Pi \mc{O}_{\pr}(1-\nr/2).
\] 
The triple $(\phi, c ,d)$ then defines an isomorphism
\[
\psi: |\pr| \lgr |X_{\bos}|, \ \psi^{\#}: \mc{O}_X \lgr \psi_*
\left(\mc{O}_{\pr} \oplus \Pi \mc{O}_{\pr}(\nr/2 - 1)\right),
\]
which shows that $
X \cong \mathbb{W}\mathbb{P}^{1|1}(1,1\; |\; 1-\nr/2)$ by Example \ref{wsp}.
\end{proof}

\begin{corollary}
  \label{-2}
The structure distribution $\mc{D}$ and the quotient
$\mc{T}_X /\mc{D} \cong \mc{D}^{\otimes 2} (\mc{R})$ on a super
Riemann surface $\Sigma = (X, \mc{R}, \mc{D}) \in \mnr(\on{Spec} k)$
may be identified on $X \cong \wy$ as follows:
\begin{align*}
\mc{D} & \cong \Pi \mc{O}_{\wy}(1-\nr/2), \\
\mc{T}_X / \mc{D} & \cong \mc{O}_{\wy}(2).
\end{align*}
\end{corollary}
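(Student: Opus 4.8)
The plan is to deduce both identifications from the restriction computations already carried out in the proof of Proposition~\ref{susyw}, combined with Corollary~\ref{susybundleisontwo}, which pins down a line bundle on $\wy$ up to isomorphism by its restriction along $i: \pr \hr \wy$. Indeed, the two bundles in question live on $X \cong \wy$, and since $i^*: \on{Pic}(\wy) \to \on{Pic}(\pr)$ is an isomorphism, it suffices to compute their bosonic reductions and promote the result back to $\wy$.

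First I would treat the odd distribution $\mc{D}$. In the proof of Proposition~\ref{susyw} the structure sequence \eqref{str-SES} was restricted to $X_{\bos} \cong \pr$ and a parity comparison produced the isomorphism $\mc{D}_{\bos} \cong \Pi \mc{O}_{\pr}(1-\nr/2)$. Since $\wy$ is split, every odd invertible sheaf is $\Pi$ of an even one, and $\Pi$ commutes with the restriction $i^*$; thus, writing $\mc{D} = \Pi \mc{E}$ for an even line bundle $\mc{E}$ on $\wy$, we have $i^* \mc{E} \cong \mc{O}_{\pr}(1-\nr/2)$, and Corollary~\ref{susybundleisontwo} gives $\mc{E} \cong \mc{O}_{\wy}(1-\nr/2)$. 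Hence $\mc{D} \cong \Pi \mc{O}_{\wy}(1-\nr/2)$.

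For the quotient I would use the identification $\mc{T}_X/\mc{D} \cong \mc{D}^{\otimes 2}(\mc{R})$ supplied by the SUSY structure, cf.\ \eqref{ramondefiningsequence}; this bundle is even, so Corollary~\ref{susybundleisontwo} applies directly once its restriction is known. From the first step $\mc{D}^{\otimes 2} \cong \mc{O}_{\wy}(2-\nr)$ (the two copies of $\Pi$ cancel). It then remains to note $\mc{O}_{\wy}(\mc{R}) \cong \mc{O}_{\wy}(\nr)$: indeed $i^* \mc{O}_{\wy}(\mc{R}) = \mc{O}_{\pr}(\mc{R}_{\bos})$, and $\mc{R}_{\bos}$ is a degree-$\nr$ divisor on $\pr$, so its restriction is $\mc{O}_{\pr}(\nr)$ and Corollary~\ref{susybundleisontwo} yields the claim on $\wy$. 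Combining, $\mc{T}_X/\mc{D} \cong \mc{O}_{\wy}(2-\nr) \otimes \mc{O}_{\wy}(\nr) \cong \mc{O}_{\wy}(2)$. Equivalently, one may restrict $\mc{T}_X/\mc{D}$ directly: the proof of Proposition~\ref{susyw} already exhibits $\mc{D}_{\bos}^{\otimes 2}(\mc{R}_{\bos}) \cong \mc{O}_{\pr}(2)$, which Corollary~\ref{susybundleisontwo} promotes to $\mc{O}_{\wy}(2)$.

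There is no substantial obstacle here; the statement is a corollary in the literal sense, requiring only the previous proposition and Corollary~\ref{susybundleisontwo}. The only points needing care are bookkeeping ones: tracking the parity shift $\Pi$ so that Corollary~\ref{susybundleisontwo}, stated for even invertible sheaves, is applied to the odd bundle $\mc{D}$ only after shifting parity, and confirming that the degree-$\nr$ Ramond divisor restricts to $\mc{O}_{\pr}(\nr)$ on the bosonic reduction so that $\mc{O}_{\wy}(\mc{R}) \cong \mc{O}_{\wy}(\nr)$.
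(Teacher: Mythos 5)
Your proposal is correct and follows essentially the same route as the paper: read off the restrictions $\mc{D}_{\bos} \cong \Pi \mc{O}_{\pr}(1-\nr/2)$ and $\mc{D}_{\bos}^{\otimes 2}(\mc{R}_{\bos}) \cong \mc{O}_{\pr}(2)$ from the parity analysis in the proof of Proposition~\ref{susyw}, then lift these identifications to $\wy$ via Corollary~\ref{susybundleisontwo}. Your main variant for the quotient (factoring through $\mc{O}_{\wy}(\mc{R}) \cong \mc{O}_{\wy}(\nr)$) is a harmless elaboration, and your ``equivalently'' remark at the end is precisely the paper's two-line argument.
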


\begin{proof}
This is not so much a consequence of the statement of Proposition
\ref{susyw} but rather of its proof, in the course of which we have
identified the pullback \eqref{cansen} of the short exact sequence
\eqref{str-SES} from $X \cong \wy$ to $X_{\bos} \cong \pr$ as
isomorphic to
\[
0 \to \Pi \mc{O}_{\pr}(1-\nr/2) \to (\mc{T}_{\wy})_{\bos} \to  \mc{O}_{\pr}(2) \to 0.
\]
The required isomorphisms on $\wy$ then follow from Corollary
\ref{susybundleisontwo}.
\end{proof}

It is not true, in general, that a family of supercurves $X/T$
underlying $\Sigma/T$ is isomorphic to $\wy \times T$, even for an
ordinary scheme $T$. One way to see this is to observe that
$\Omega_{\pr \times T}^1$ does not have a unique square root and so
the proof used in Proposition \ref{susyw} does not generalize to
families. The square roots of $\Omega_{\pr \times T}^1$, in fact, form
a torsor over $H^1(T, \Z_2)$. To see this, first recall that any line
bundle on $\pr \times T$ is isomorphic to $\mc{O}_{\pr}(m) \otimes
\mc{I}$, where $\mc{I}$ a line bundle on $T$. It follows that
$\mc{O}_{pr}(m) \otimes \mc{I}$ is a square root of $\Omega_{\pr
  \times T}^1$ if and only if $m= -1$ and $\mc{I}$ is a two-torsion line bundle
on $T$.

\begin{proposition}
\label{wittenclassifying}
Any family of supercurves $X/T$ underlying $\Sigma/T$, where $T$ is an ordinary scheme, is \'etale locally on the base isomorphic to a pullback of $\wy/k$.
\end{proposition}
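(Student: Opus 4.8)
The plan is to reduce the statement to two independent trivializations---of the underlying genus-zero fibration and of an odd line bundle on it---both of which hold after an \'etale (indeed Zariski, for the line bundle) base change.

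First I would observe that, because $T$ is purely even and $X/T$ has a single odd dimension, the supercurve $X/T$ is split. Indeed, every odd function is an $\mc{O}_{X_{\bos}}$-multiple of a local odd coordinate $\zeta$ (there are no odd functions coming from the base), so over an overlap of two charts the transition maps necessarily have the form $z' = f(z)$, $\zeta' = g(z)\zeta$; this is the same computation as in Lemma~\ref{scm}, now carried out over $T$. Hence $X$ is the total space of an odd line bundle over $\mc{C}:=X_{\bos}$, and is completely described by the pair $(\mc{C}, \mc{N})$, where $\mc{N}:=\mc{O}_X^-$ is a line bundle on $\mc{C}$. Applying Proposition~\ref{susyw} fiberwise, $\mc{C} \to T$ is a smooth proper family of genus-zero curves and $\mc{N}$ restricts on every geometric fiber to $\mc{O}_{\pr}(\nr/2-1)$ (its dual being the spin bundle $\mc{D}_{\bos}$ of Corollary~\ref{-2}). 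By Example~\ref{wsp}, $\wy \times_k T$ is precisely the split supercurve attached to $(\pr \times T,\ \prn^* \mc{O}_{\pr}(\nr/2-1))$, so it suffices to trivialize the pair $(\mc{C}, \mc{N})$ over an \'etale cover of $T$.

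Next I would trivialize the base. A smooth proper family of genus-zero curves $\mc{C} \to T$ is a Brauer--Severi scheme of relative dimension one, and such a family acquires a section---hence becomes isomorphic to $\pr \times T$ over $T$---after an \'etale base change (for instance a degree-two cover). Replacing $T$ by this cover, I assume $\mc{C} \cong \pr \times T$ over $T$. Then $\mc{N}$ is a line bundle on $\pr \times T$ restricting to $\mc{O}_{\pr}(\nr/2-1)$ on each fiber; by the seesaw principle ($\on{Pic}(\pr \times T) \cong \Z \oplus \on{Pic}(T)$, in the spirit of Lemma~\ref{Picard}) one may write $\mc{N} \cong \prn^* \mc{O}_{\pr}(\nr/2-1) \otimes \prt^* \mc{I}$ for a unique $\mc{I} \in \on{Pic}(T)$. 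Since any line bundle on $T$ is trivial Zariski-locally, shrinking $T$ further makes $\mc{I} \cong \mc{O}_T$, whence $\mc{N} \cong \prn^* \mc{O}_{\pr}(\nr/2-1)$ and $X \cong \wy \times_k T$ over the resulting \'etale cover.

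The substantive inputs are the fiberwise classification of Proposition~\ref{susyw} (which pins down both the genus of $\mc{C}$ and the isomorphism type of $\mc{N}$ on fibers) and the \'etale-local triviality of a genus-zero fibration; the remainder is the structure theory of split supercurves together with the Picard group of $\pr \times T$. The point I expect to require the most care is conceptual rather than computational: we are trivializing $X/T$ only \emph{as a supercurve}, forgetting the SUSY structure, so we need only trivialize $\mc{N}$ up to isomorphism and not the spin datum $\mc{N}^{\otimes 2} \cong \Omega^1_{\mc{C}/T}(\mc{R}_{\bos})$ compatibly. This is exactly why local triviality holds even though $X/T$ is in general \emph{not} globally isomorphic to $\wy \times T$: the global obstruction is the possibly nontrivial class $\mc{I}$, equivalently the $2$-torsion ambiguity in the square roots of $\Omega^1_{\pr \times T/T}(\mc{R}_{\bos})$ discussed before the proposition, and this obstruction---being a line bundle class---dissolves Zariski-locally.
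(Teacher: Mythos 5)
Your proof is correct, but it handles the key final step differently from the paper's own proof of this proposition --- and in fact it reproduces, almost verbatim, the paper's proof of the more general Proposition \ref{wittenclassifying-m}. Both arguments share the same skeleton: pass to an \'etale cover trivializing the bosonic genus-zero family, identify $X$ with the split supercurve attached to the pair $(\pr \times T, \mc{N})$ where $\mc{N} = \mc{O}_X^-$ restricts fiberwise to $\mc{O}_{\pr}(\nr/2-1)$ (splitness over an even base, as in Lemma \ref{scm}, plus the fiberwise classification of Proposition \ref{susyw}), and then kill the twist $\mc{I} \in \on{Pic}(T)$ appearing in $\mc{N} \cong \prn^*\mc{O}_{\pr}(\nr/2-1)\otimes\prt^*\mc{I}$. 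The divergence is in how $\mc{I}$ is killed: the paper uses the SUSY structure once more, observing that $\mc{N}$ is a square root of the pullback of $\mc{O}_{\pr}(2-\nr)$, so that $\mc{I}$ is forced to be \emph{two-torsion}, and then trivializes it on the canonical connected double \'etale cover $\on{Spec}(\mc{O}_{V_j}\oplus\mc{I}) \to V_j$; you instead discard the SUSY structure after the fiberwise identification and trivialize the arbitrary line bundle $\mc{I}$ Zariski-locally. Your route is more elementary and strictly more general --- it applies to any family of weight-$m$ genus-zero supercurves whether or not it underlies a SUSY family, which is exactly what Proposition \ref{wittenclassifying-m} asserts --- at the cost of what the paper calls a ``less economical'' cover; the paper's route buys a single, canonical double cover per chart with no further Zariski shrinking, and locates the obstruction precisely in $H^1(T,\Z_2)$, matching the discussion of square roots of $\Omega^1_{\pr\times T}$ preceding the proposition. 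One small imprecision you should fix: a genus-zero fibration that acquires a section is not yet isomorphic to $\pr\times T$; the section only exhibits it as a projective bundle $\mathbb{P}(\mc{E})$ with $\mc{E}$ of rank two, hence trivial only Zariski-locally on $T$. Since you shrink $T$ Zariski-locally immediately afterwards anyway, this costs nothing, but the intermediate claim should be phrased accordingly.
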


\begin{proof}
To prove the statement, we will show that there exists an
\'etale cover $\{U_i \to T\}_{i \in I}$ on which $X$ trivializes: $X \times_T U_i \cong \wy \times U_i$ for each $i$.

Since $X_{\bos}/T$ is an ordinary family of genus zero curves, there exists an \'etale cover $\{V_j \to T\}_{j \in J}$ and an isomorphism $X_{\bos} \times_T V_j \lgr \pr \times V_j$ for each $j \in J$.
Write $p_1: \pr \times V_j \to \pr$ and $p_2: \pr \times V_j \to V_j$ for the natural projections. Then
\[ \sqrt{\mc{O}_{\pr \times V_j}(2-\nr)} \cong p_1^* \mc{O}_{\pr}(1-\nr/2) \otimes p_2^*\mc{N}, \]
where $\mc{N}$ is a two-torsion line bundle on $V_j$. 
 
Two-torsion line bundles over $V_j$ form a $H^1(V_j, \Z_2)$-torsor and so there exists a double \'etale cover $p_j: U_j:=\on{Spec}(\mc{O}_{V_j} \oplus \mc{N}) \to V_j$ on which $p_j^*\mc{N} \cong \mc{O}_{U_j}$; here the ring structure on $\mc{O}_{V_j} \oplus \mc{N}$ is given by the $\mc{O}_{V_j}$-module structure on $\mc{N}$ and the two-torsion isomorphism $\varphi: (\mc{N})^{\otimes 2} \to \mc{O}_{V_j}$:
\[
(f,n) \cdot (f',n') := (ff' + \varphi(n \otimes n'), f n' + f' n) \qquad \text{for } f, f' \in \mc{O}_{V_j} \text{ and } n, n' \in \mc{N}.
\]

Now we can argue as in the proof of Proposition \ref{susyw} and show
that the sheaf $\mc{J}^{\vee}$ of odd nilpotents on $X \times_T U_j$
is isomorphic to $\mc{O}_{\pr \times U_j}(1-\nr/2)$, and so $X
\times_T U_j \cong \wy \times U_j$.
\end{proof}

\subsubsection*{Moduli of weighted supercurves}
\label{moduliofw}

Let $\Sigma/T$ be a family of genus $0$ super Riemann surfaces with an
even number $\nr \ge 4$ of Ramond punctures and let $X/T$ be the
underlying family of supercurves. By Proposition \ref{susyw}, the
geometric fibers $X_t$ are isomorphic to $\wb$. This is a particular
instance of a slightly more general situation.

\begin{definition} A \emph{family $X/T$ of weight $m$, genus zero supercurves} is a smooth, proper morphism $X \to T$ of relative dimension $(1|1)$ such that each fiber $X_t$ is isomorphic to $\mathbb{W} \mathbb{P}(1,1\, |\, m)$. 
\end{definition}

\begin{definition}
\label{Mm}
The \emph{moduli superstack $M(m)$ of weight $m$, genus zero supercurves} is the superstack over $\sS$ with fiber over $T$ the groupoid $M(m)(T)$ whose objects are families $X/T$ of genus zero supercurves of weight $m$ over $T$ and whose morphisms are isomorphisms of such families. 
\end{definition}

In this paper, we are interested in the case $m=1-\nr/2$ for even $\nr \ge 4$. However, our construction of $M(1-\nr/2)$ in Section \ref{construction} below could be generalized to more values of $m$. For example, if we allow for a less economical \'{e}tale cover of the base, we can prove the following generalization of Proposition \ref{wittenclassifying} to arbitrary $m$, which we will use in Section \ref{construction}.

\begin{proposition}
\label{wittenclassifying-m}
Any family of supercurves $X/T$ of weight $m$, where $T$ is an ordinary scheme, is \'etale locally on the base isomorphic to a pullback of $\wm/k$.
\end{proposition}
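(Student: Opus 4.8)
The plan is to follow the skeleton of the proof of Proposition~\ref{wittenclassifying}, but to bypass the square-root construction that was forced there by the SUSY structure: in the absence of a superconformal structure the odd line bundle of $X/T$ can be read off directly from the fibers, and one need only trivialize it \'etale-locally on the base.

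First I would establish that, over an ordinary base $T$, the family $X/T$ is relatively split. Since the relative dimension is $(1|1)$ and $T$ carries no odd functions, every relative odd function is an $\mc{O}_{X_{\bos}}$-multiple of a single odd coordinate, so $(\mc{O}_X^-)^2 = 0$ and $\mc{J} = \mc{O}_X^-$. Arguing as in the proof of Lemma~\ref{scm}, now relatively over $T$, the gluing data can only take the shape $z' = f(z)$, $\zeta' = g(z)\zeta$ (no $\zeta$-correction to the even coordinate is available over an even base), whence $\mc{O}_X \cong \mc{O}_{X_{\bos}} \oplus \mc{J}$ with $\mc{J}$ a line bundle on $X_{\bos}$ and $X/T$ split.

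Next I would trivialize the two pieces of this split structure on an \'etale cover. As $X_{\bos}/T$ is an ordinary genus-zero family, there is an \'etale cover $\{V_j \to T\}$ with $X_{\bos}\times_T V_j \cong \pr \times V_j$; write $p_1, p_2$ for the two projections of $\pr \times V_j$. By the standard structure of $\on{Pic}(\pr \times V_j)$ (seesaw), the restriction of $\mc{J}$ has the form $\Pi\bigl(p_1^*\mc{O}_{\pr}(d) \otimes p_2^* \mc{M}\bigr)$ for some line bundle $\mc{M}$ on $V_j$ and some integer $d$ constant on each connected component; restricting to a fiber $X_t \cong \wm$ and comparing with \eqref{wm-functions} forces $d = -m$. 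Since any line bundle is Zariski-locally trivial, I would then refine $\{V_j\}$ by Zariski opens $U \to V_j \to T$ (an open immersion followed by an \'etale map, hence still an \'etale cover of $T$) on which $\mc{M}|_U \cong \mc{O}_U$, so that $\mc{J}|_{\pr \times U} \cong \Pi\, p_1^* \mc{O}_{\pr}(-m)$.

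Finally, over such a $U$ the split superscheme $X\times_T U$ has bosonic reduction $\pr \times U$ and odd line bundle $\Pi\, p_1^* \mc{O}_{\pr}(-m)$, which is precisely the data of $\wm \times U$ by \eqref{wm-functions}; since a split superscheme is determined by its bosonic reduction together with its odd line bundle, $X\times_T U \cong \wm \times U$. The only genuinely delicate point is the first step: one must verify that relative splitness really holds over an ordinary base and that $\mc{J}$ is a line bundle of the expected fiberwise degree. Once splitness is in hand, the remainder is the familiar trivialization of a line bundle on $\pr \times(\text{base})$, which, unlike in Proposition~\ref{wittenclassifying}, requires no passage to a double cover because no square root is involved --- this is exactly the sense in which one trades a sharper conclusion for a \emph{less economical} \'etale cover.
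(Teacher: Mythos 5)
Your proof is correct and takes essentially the same route as the paper's: trivialize $X_{\bos}/T$ on an \'etale cover, identify the odd line bundle $\mc{J}$ as $p_1^*\Pi\mc{O}_{\pr}(-m)\otimes p_2^*\mc{M}$ via the fiberwise identification with $\wm$, and remove the base twist $\mc{M}$ on a Zariski refinement of the cover. The only difference is that you spell out the relative splitness of $X/T$ over an ordinary base, which the paper leaves implicit; that is a useful clarification of the final step but not a different argument.
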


\begin{proof}
We will proceed as in the proof of Proposition \ref{wittenclassifying} and show that there exists an
\'etale cover $\{U_i \to T\}_{i \in I}$ on which $X \times_T U_i \cong \wm \times U_i$.

As in that proof, we start with passing to an \'etale cover $\{V_j \to T\}_{j \in J}$ which trivializes the underlying family $X_{\bos}/T$ of ordinary curves of genus zero via an
isomorphism
$\phi_j: \pr \times V_j \lgr X_{\bos} \times_T V_j$.
Then
\[
\phi_j^* \mc{J} \cong p_1^* \Pi \mc{O}_{\pr}(-m) \otimes p_2^*\mc{N} \]
for some  line bundle $\mc{N}$ on $V_j$, because $\mc{J} \cong \Pi \mc{O}_{\pr}(-m)$ on the geometric fiber $\wm$ as in Example \ref{wsp}.
 
For each $j \in J$, if we take a Zariski-open cover $\{U_{ij} \to V_j\}_{i \in I_j}$ of $V_j$ on which the line bundle $\mc{N}$ trivializes, then we can have
\[
\phi_j^* \mc{J}|_{U_{ij}} \cong p_1^* \Pi \mc{O}_{\pr}(-m)|_{U_{ij}}, 
\]
which implies that over $U_{ij}$,
the family $X/T$ is isomorphic to $\wm \times U_{ij}$.
\end{proof}

\subsection{Deformations of $\wy$} \label{deformations}

The deformation space (if it exists) of $\wy = \wb$
is the superscheme representing the functor in the following definition.

\begin{definition}
The \emph{deformation functor
\[ \on{Def}(\wy): \on{sArt}_k \to \on{Set} \]
of $\wy$} is the functor sending a local Artin $k$-superalgebra $R$ to the set of isomorphism classes of deformations of $\wy$ over $\on{Spec} R$.
\end{definition}

A \emph{first-order deformation} of $\wy$ is a deformation of $\wy$ over the spectrum of the superalgebra $k[\epsilon, \eta]/(\epsilon^2, \eta \epsilon)$, where $|\epsilon|=0$ and $|\eta|=1$. The superalgebra $k[\epsilon, \eta]/(\epsilon^2, \eta \epsilon)$ is an Artin $k$-superalgebra referred to as the \emph{super dual numbers} and plays the same role as that of the dual numbers in the classical setting.

From Lemma \ref{dimensions}, we know that the first-order deformations of $\wy$ form a $(0|\nr/2-2)$-dimensional vector space.

\begin{lemma} \label{infdef} In local coordinates $(z \, | \, \zeta)$ on $\wy$,  the vector fields \begin{equation}
\label{basisforhone}
\bigg \{ z^{-1} \delp{\zeta}, \dots, z^{-(\nr/2-2)} \delp{\zeta} \bigg \}
\end{equation}
form a basis for $H^1(\wy, \mc{T}_{\wy})$. 
\end{lemma}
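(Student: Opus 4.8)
The plan is to compute $H^1(\wy, \mc{T}_{\wy})$ by \v{C}ech cohomology with respect to the two-chart affine cover $\{U, V\}$ of Section~\ref{weightsuperproj}. For a two-element cover there are no $2$-cochains, so the \v{C}ech complex reduces to $\mc{T}_{\wy}(U) \oplus \mc{T}_{\wy}(V) \to \mc{T}_{\wy}(U\cap V)$ and
\[
H^1(\wy, \mc{T}_{\wy}) = \mc{T}_{\wy}(U\cap V) \big/ \big( \mc{T}_{\wy}(U)|_{U\cap V} + \mc{T}_{\wy}(V)|_{U\cap V} \big);
\]
in particular every vector field on the overlap is automatically a cocycle. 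By Lemma~\ref{dimensions} this quotient is purely odd of dimension $\nr/2-2$, so it will suffice to exhibit the $\nr/2-2$ classes $z^{-1}\delpzeta, \dots, z^{-(\nr/2-2)}\delpzeta$ as linearly independent modulo coboundaries.

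First I would record how vector fields transform between the charts. Writing the gluing of Section~\ref{weightsuperproj} as $w = 1/z$ and $\chi = \zeta z^{\nr/2-1}$, the chain rule gives
\begin{align*}
\frac{\partial}{\partial w} &= -z^2\, \delpz + (\nr/2-1)\, z\zeta\, \delpzeta, \\
\frac{\partial}{\partial \chi} &= z^{1-\nr/2}\, \delpzeta.
\end{align*}
Using these I would rewrite the restriction to $U\cap V$ of an arbitrary section of $\mc{T}_{\wy}(V)$, expressed in the $V$-chart as $(P + \chi Q)\tfrac{\partial}{\partial w} + (R + \chi S)\tfrac{\partial}{\partial \chi}$ with $P,Q,R,S \in k[w]$, in terms of the frame $\delpz, \delpzeta$ and the coordinate $z$.

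The decisive step is to isolate the coefficient of $\delpzeta$ that is free of $\zeta$. A section of $\mc{T}_{\wy}(U)$ contributes to it an arbitrary $r(z) \in k[z]$, that is, only powers $z^{j}$ with $j \ge 0$. Among the four terms of $\mc{T}_{\wy}(V)|_{U\cap V}$, only $R\,\tfrac{\partial}{\partial \chi} = z^{1-\nr/2}R(1/z)\,\delpzeta$ contributes a $\zeta$-free $\delpzeta$-coefficient, namely an element of $z^{1-\nr/2}k[z^{-1}]$, i.e.\ only powers $z^{j}$ with $j \le 1-\nr/2$ (the terms $P\,\tfrac{\partial}{\partial w}$ and $\chi S\,\tfrac{\partial}{\partial \chi}$ land in the $\zeta$-coefficient of $\delpzeta$, and $\chi Q\,\tfrac{\partial}{\partial w}$ lands in the $\zeta$-coefficient of $\delpz$ because $\chi\zeta = 0$). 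Hence the $\zeta$-free $\delpzeta$-coefficient of any coboundary contains no power $z^{-1}, z^{-2}, \dots, z^{-(\nr/2-2)}$, since these exponents lie strictly between $1-\nr/2$ and $0$. Therefore $\sum_{k=1}^{\nr/2-2} c_k z^{-k}\delpzeta$ is a coboundary only when all $c_k = 0$, which gives the asserted linear independence; with the dimension count of Lemma~\ref{dimensions}, these fields form a basis.

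The main point requiring care is the mixing term $(\nr/2-1)z\zeta\,\delpzeta$ in the transition of $\tfrac{\partial}{\partial w}$: it shows that the even part of $\mc{T}_{\wy}(V)$ couples the coefficients of $\delpz$ and of $\delpzeta$, so one cannot naively split the quotient into four independent pieces. If one prefers not to invoke Lemma~\ref{dimensions} for the vanishing of the even part of $H^1$, the alternative is to check it directly: the free coefficient $S$ lets one match the $\zeta$-coefficient of $\delpzeta$ independently of the choice of $P$, while the $\zeta$-free coefficient of $\delpz$ is killed because $k[z] + z^2 k[z^{-1}] = k[z,z^{-1}]$. Once this is verified, the odd computation above—the only genuinely nontrivial part—pins down $H^1(\wy,\mc{T}_{\wy})$ together with its stated basis.
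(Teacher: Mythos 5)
Your proposal is correct and follows essentially the same route as the paper: \v{C}ech cohomology on the standard two-chart cover, the same transition formulas $\frac{\partial}{\partial w} = -z^2\delpz + (\nr/2-1)z\zeta\delpzeta$ and $\frac{\partial}{\partial \chi} = z^{1-\nr/2}\delpzeta$, and the same key observation that the $\zeta$-free $\delpzeta$-coefficients reachable from $U$ and $V$ have exponents $\ge 0$ and $\le 1-\nr/2$ respectively, leaving exactly the gap $z^{-1},\dots,z^{-(\nr/2-2)}$. The only organizational difference is that you establish spanning via the dimension count of Lemma~\ref{dimensions} (which the paper itself invokes just before this lemma), whereas the paper writes out the full system of matching equations for all four coefficient functions; your sketch of the direct even-part vanishing closes even that small gap.
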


\begin{proof}
To give a coordinate description of a basis for $H^1( \mc{T}_{\wy})$,  we use \v{C}ech cohomology on the cover $\mc{U}=\{U=\on{Spec} k[z  |  \zeta], V=\on{Spec} k[w | \chi]\}$ of $\wy$: 
Consider the local vector fields 
\begin{align}\label{localdescriptangent}
H^0(U, \mc{T}_{\wy} ) & = \left\{ \sum_{j \ge 0} \left(a_j + b_j \zeta \right) z^j\delp{z}  +   \sum_{j \ge 0} \left(c_j + d_j \zeta \right) z^j \delp{\zeta}\right\}, \\ 
\nonumber H^0(V, \mc{T}_{\wy}) & = \left\{ \sum_{j \ge 0} \left( \ov{a_j} + \sum \ov{b_j} \chi \right)w^j \delp{w} +   \sum_{j \ge 0} \left(\ov{c_j} w^j + \sum \ov{d_j} \chi \right) w^j \delp{\chi}\right\},
\end{align} where $a_j, d_j, \ov{a}_j, \ov{d}_j \in k$ and  $b_j= c_j= \ov{b}_j=  \ov{c}_j=0$ for the above sections to be even and vice versa for the sections to be odd and where all summations are for $j \ge 0$ and all the coefficients but finitely many vanish.  Changing coordinates on $U \cap V$ we compute: 
\begin{align*} 
\delp{w} = & -z^2 \delp{z} +  (\nr/2-1)\zeta z  \delp{\zeta},\\ 
\delp{\chi} = & z^{1-\nr/2} \delp{\zeta}.
\end{align*} 

A basis for the super vector space $H^1(\mc{T}_{\wy})$ is given by a basis for the nullspace of the following system of equations: 
\begin{align} \label{defining}  
\sum_{j \ge 0} a_j z^j  - \sum_{j \ge 0} \ov{a}_{j} z^{-j+2}= & 0,  & :\frac{\partial}{\partial z}  \\
\nonumber
    \sum_{j \ge 0} b_j z^j - \sum_{j \ge 0} \ov{b}_{j} z^{\nr/2+1-j} = & 0,  &  :\zeta \frac{\partial}{\partial z}   \\
    \nonumber
\sum_{j \ge 0} c_j z^j  - \sum_{j \ge 0} \ov{c}_{j} z^{1-j-\nr/2} = & 0, & : \delp{\zeta} \\
\nonumber
\sum_{j \ge 0} d_j z^j - (\nr/2-1) \sum_{j \ge 0} \ov{a}_{j} z^{-j+1} - \sum_{j \ge 0} \ov{d}_{j} z^{-j}= & 0, & : \zeta \delp{\zeta},
\end{align}
We find that the following vector fields form a basis for $H^1(\mc{T}_{\wy})$: 
\begin{equation*}
\bigg \{ z^{-1} \delp{\zeta}, \dots, z^{-(\nr/2-2)} \delp{\zeta} \bigg \}. \qedhere
\end{equation*}
\end{proof}

This computation implies that a first-order deformation of $\wy$ is uniquely determined by its local trivializations $U \linebreak[3]= \linebreak[2] \on{Spec} k[z, \linebreak[0] \epsilon \linebreak[1] \, | \, \zeta, \eta]$ and $V=\on{Spec} k[w, \epsilon \, | \, \chi, \eta]$ and its gluing data:
\begin{align} \label{gluingfunction}
 w & \mapsto 1/z, \\
\nonumber \chi & \mapsto z^{\nr/2-1} \left( \zeta  + \eta \sum_{i=1}^{\nr/2-2} c_i z^{-i} \right) = \zeta z^{\nr/2-1} + \eta \sum_{i=1}^{\nr/2-2} c_i z^{\nr/2- 1 -i} \end{align}
with $c_i \in k$.

Let
\[
S := \mathbb{H}^1 (\mc{T}_{\wy}) : = \on{Spec} \mc{S}( H^1(\mc{T}_{\wy})^{\vee})
\]
be the affine superspace based on the super vector space $H^1(\mc{T}_{\wy})$ and choose coordinates $\eta_1, \dots, \linebreak[0] \eta_{\nr/2-2}$ corresponding to the basis of $H^1(\mc{T}_{\wy})$ given in Lemma \ref{infdef}. The superspace $S$ is then identified with the affine superspace
\[
S \cong \mathbb{A}^{0|\nr/2-2} = \on{Spec} k [\eta_1, \dots, \eta_{\nr/2-2}].
\] 
We will denote the supersymmetric algebra $\mc{S} (H^1(\mc{T}_{\wy})^\vee)$ by $B$, so that
\begin{align*}
    B & = \mc{S} (H^1(\mc{T}_{\wy})^\vee) \cong k [\eta_1, \dots, \eta_{\nr/2-2}] \text{ and}\\
S & = \mathbb{H}^1 (\mc{T}_{\wy}) = \Spec B.
\end{align*}
We denote by $Z$ the deformation of $\wy$ over $S$ with local trivialization $U=\on{Spec}B[z\, | \, \zeta]$ and $V=\on{Spec}B[w\, | \, \chi]$ and gluing data, 
\begin{align} \label{gluingformula1} 
 w & \mapsto 1/z, \\
\nonumber \chi & \mapsto \zeta z^{\nr/2-1} + \sum_{i=1}^{\nr/2-2} \eta_i z^{\nr/2-1 - i}, \end{align}
or, equivalently,
\begin{align} 
\label{gluingformula} 
 z & \mapsto 1/w, \\
\nonumber \zeta & \mapsto \chi w^{\nr/2-1} - \sum_{i=1}^{\nr/2-2} \eta_i w^{i}. \end{align}

 \begin{theorem} \label{versal} The functor $\deform(\wy)$ is represented by the superscheme $S$ with universal deformation $Z$.
 \end{theorem}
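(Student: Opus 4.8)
The plan is to identify $\deform(\wy)$ with the functor $h_B := \Hom_{\on{sArt}_k}(B,-)$ represented by $B = \mc{S}(H^1(\wy,\mc{T}_{\wy})^\vee)$, using the explicit family $Z/S$ to produce the comparison. The crucial preliminary observation is that, by Lemma \ref{dimensions}, the space $H^1(\wy,\mc{T}_{\wy})$ is \emph{purely odd} of dimension $\nr/2-2$, so $B$ is the free supercommutative algebra on $\nr/2-2$ odd generators $\eta_1,\dots,\eta_{\nr/2-2}$; in particular $B$ is finite-dimensional, hence itself a local Artin $k$-superalgebra lying in $\on{sArt}_k$. Thus representing the functor $\deform(\wy):\on{sArt}_k\to\on{Set}$ by the superscheme $S=\Spec B$ amounts exactly to proving that a natural transformation $h_B\to\deform(\wy)$ is an isomorphism, and no formal-versus-actual (algebraization) issue ever intervenes.

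First I would construct this natural transformation. Given $\phi\in h_B(R)=\Hom_{\on{sArt}_k}(B,R)$, equivalently a morphism $\Spec R\to S$, I set $\phi^*Z := Z\times_S\Spec R$. Since $Z\times_S\Spec k\cong\wy$ by construction of the gluing \eqref{gluingformula} (which specializes to \eqref{gluingfunctionforwp} when all $\eta_i=0$), the family $\phi^*Z$ is a deformation of $\wy$ over $\Spec R$, and flat base change makes $\phi\mapsto[\phi^*Z]$ a natural transformation $u:h_B\to\deform(\wy)$; the universal family $Z$ itself is the image of $\id_B$.

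Next I would record that both functors are formally smooth. For $\deform(\wy)$ this is unobstructedness: the obstruction to lifting a deformation along a small extension $0\to I\to R'\to R\to 0$ lies in $H^2(\wy,\mc{T}_{\wy})\otimes_k I$ (super deformation theory, cf.\ \cite{vaintrob1990deformation}), and $H^2(\wy,\mc{T}_{\wy})=0$ because $\wy$ is covered by the two affine charts $U,V$, for which \v{C}ech cohomology (computing sheaf cohomology) vanishes in degrees $\ge 2$. For $h_B$ smoothness holds because $B$ is free on odd generators, so any $B\to R$ lifts along a surjection $R'\twoheadrightarrow R$ by lifting the (odd, nilpotent) images of the generators, which is unobstructed. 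I would then check that $u$ is an isomorphism on tangent spaces: by Lemma \ref{infdef} the basis $\{z^{-1}\delp{\zeta},\dots,z^{-(\nr/2-2)}\delp{\zeta}\}$ of $H^1(\wy,\mc{T}_{\wy})$ is matched one-to-one with the odd parameters $\eta_i$ appearing in the first-order gluing \eqref{gluingfunction}, so the induced map $t_{h_B}=H^1(\wy,\mc{T}_{\wy})\to t_{\deform(\wy)}=H^1(\wy,\mc{T}_{\wy})$ is the identity. Finally, invoking the standard smoothness criterion — a morphism of deformation functors inducing an isomorphism on tangent spaces between two formally smooth functors is itself an isomorphism, proved by induction on $\dim_k R$ using the torsor structure of the fibers over small extensions — I conclude that $u$ is an isomorphism, so $S$ represents $\deform(\wy)$ with universal deformation $Z$.

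The main obstacle, and the step demanding the most care, is the tangent-space identification: one must make precise, via the \v{C}ech description underlying Lemma \ref{infdef}, that the deformation class of $\phi^*Z$ over the super dual numbers is read off from the gluing cocycle \eqref{gluingfunction} so that the parameter $\eta_i$ contributes exactly the class of $z^{-i}\delp{\zeta}$ and nothing more — in particular that the even gluing $w=1/z$ contributes trivially and that \eqref{basisforhone} and the $\eta_i$ are matched with the correct signs and normalizations. The remaining ingredients — unobstructedness, smoothness of $h_B$, and the abstract smoothness criterion — are routine consequences of the cohomology computations already in hand.
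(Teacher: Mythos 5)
The construction of the comparison map, the unobstructedness claim ($H^2(\wy,\mc{T}_{\wy})=0$ since $\wy$ is covered by two affine charts), the smoothness of $h_B$, and the tangent-space identification via Lemma \ref{infdef} are all correct, and up to that point your argument is essentially the paper's own: the paper's proof of Theorem \ref{versal} is precisely the induction over small extensions that you describe, using the torsor structure of liftings and the vanishing of even deformations to produce, for every $X \in \on{Def}(\wy)(R)$, a classifying map $f\colon B \to R$ with $X \cong Z\times_{S,f}\Spec R$. The gap is your final step. The criterion you invoke --- a map of formally smooth deformation functors that is an isomorphism on tangent spaces is an isomorphism of functors --- is not a theorem. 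Smoothness plus a tangent-space isomorphism yields only that $B$ is a \emph{hull} of $\on{Def}(\wy)$, i.e.\ that $Z$ is versal; this is the surjectivity of $u$, and it is all that your induction (and the paper's) actually establishes. Injectivity of $u$ is an additional condition, Schlessinger's (H4): the action of $H^1(\wy,\mc{T}_{\wy})\otimes I$ on isomorphism classes of liftings along a small extension must be \emph{free}, equivalently automorphisms of infinitesimal deformations must extend. This is not formal, and it is exactly where the problem sits, because $H^0(\wy,\mc{T}_{\wy})\neq 0$ (see \eqref{H0T}): $\wy$ has infinitesimal automorphisms, and they act nontrivially on deformations.

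Concretely, $[z\delp{z}, z^{-i}\delp{\zeta}]=-i\,z^{-i}\delp{\zeta}$, so the adjoint action of $H^0(\wy,\mc{T}_{\wy})$ on $H^1(\wy,\mc{T}_{\wy})$ is nonzero as soon as $\nr\ge 6$. At the level of deformations: over $R=B[t]/(t^2)$ the two maps $f,g\colon B\to R$ with $f(\eta_i)=\eta_i$ and $g(\eta_i)=(1+t)^i\eta_i$ are distinct, yet $f^*Z\cong g^*Z$ as deformations of $\wy$; an isomorphism is given in the charts of \eqref{gluingformula1} by $(z\,|\,\zeta)\mapsto((1+t)z\,|\,\zeta)$ and $(w\,|\,\chi)\mapsto\bigl(w/(1+t)\,|\,(1+t)^{\nr/2-1}\chi\bigr)$, which is compatible with the two gluings and reduces to the identity modulo the maximal ideal of $R$. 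Hence $u\colon h_B\to\on{Def}(\wy)$ is formally smooth and bijective on tangent spaces but \emph{not} injective, so the criterion you cite is false for this very functor, and no argument along those lines can close the gap. (This also locates a real subtlety in the statement itself: the paper's proof is silent about injectivity, proving only versality of $Z$; the identifications caused by automorphisms are handled in the paper not at the level of the set-valued functor $\on{Def}(\wy)$ but later, at the level of the quotient superstack $[S/\awy]$ in Theorem \ref{artin}, where automorphisms are recorded in the groupoid rather than collapsed into isomorphism classes.)
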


\begin{proof}
Let $X \in \on{Def}(\wy)(R)$, where $R \in \on{sArt}_k$. The surjection $R \to R/\mc{J}=R_{\bos}$ can be factored into a finite composition
\[ R= R_m \to R_{m-1} \to \dots \to  R_1 \to R_0 = R_{\bos} \]
of small extensions by $\mc{J}^i/\mc{J}^{i+1}$, where $\mc{J}$ is the ideal of odd nilpotents in $R$. Denote the pullback of $X$ to $R_i$ by $X_i$.

Since $\on{dim}_k H^1(\wy, \mc{T}_{\wy})^+=0$, $\wy$ has no nontrivial even deformations, so, in particular, there exists no nontrivial deformations over the spectrum of a purely even local Artin $k$-algebra. Thus, any deformation of $\wy$ over $\Spec R_0$ is trivial, \emph{i.e}., $X_0 \cong \wy \times \Spec R_{0} = Z \times_{S, \tau} \Spec R_{0}$ for the canonical map $\tau: S \to k \to R_{0}$, which sends all the $\eta_j$'s to 0.

The set of isomorphism classes of deformations of $X_0$ over $R_1$ is a torsor over the group $H^1(X_0, \mc{T}_{X_0/ R_0}) \linebreak[0] \otimes_{R_0} \mc{J}/\mc{J}^2 = H^1(\wy, \mc{T}_{\wy}) \linebreak[0] \otimes_k \mc{J}/\mc{J}^2$.
Note that we always have a ``trivial'' deformation  $Z \times_{S, \tau} \Spec R_1 \cong \wy \times \Spec R_1$ of $X_0 \cong \wy \times \Spec R_0$, which we can use as a base point of the torsor of deformations of $X_0$ over $R_1$ and identify this torsor with the group $H^1(\wy, \mc{T}_{\wy}) \linebreak[0] \otimes_k \mc{J}/\mc{J}^2$. This pullback $Z \times_{S, \tau} \Spec R_1$ is covered by the affine charts $U' \cong \on{Spec} R_1[z \, | \,\zeta]$ and $ V' \cong \on{Spec} R_1[w \, | \,\chi]$ with transition functions
\begin{align}
\label{undeformed}
w & \mapsto 1/z, \\
\nonumber \chi & \mapsto \zeta z^{\nr/2-1}.
\end{align} 
From the computation of $H^1(\mc{T}_{\wy})$ in Lemma \ref{infdef}, we find that the elements of $H^1(\mc{T}_{\wy}) \linebreak[0] \otimes_{k} \mc{J}/\mc{J}^2$ may be identified with linear combinations
\begin{equation}
\label{vfield}    
\sum_{j=1}^{\nr/2-2} d_j z^{-j} \delp{\zeta}
\end{equation}
of the vector fields in \eqref{basisforhone} with coefficients $d_j \in \mc{J}/\mc{J}^2$.

Given the formulas \eqref{undeformed} describing the trivial deformation, the deformation which corresponds to an element \eqref{vfield} is described, up to isomorphism, by the following gluing functions
\begin{align*} 
w & \mapsto 1/z, \\
\nonumber \chi & \mapsto \zeta z^{\nr/2-1} + \sum_{j=1}^{\nr/2-2} d_j z^{\nr/2-1-j}.
\end{align*} 
This means that there exist $d_j \in \mc{J}/\mc{J}^2$, $j = 1, \dots, \nr/2-2$, such that the deformation $X_1$ is isomorphic to the one given by these gluing functions. 
Define $f_1: B \to R_1: \eta_j \mapsto d_j$, then $X_1 \cong Z \times_{S,f_1} \Spec R_1$ as deformations of $X_0$ over $R_1$. 

Now continue inductively on $i$, starting from $i=1$ to move up to $i = m$. Assume we have defined $f_i: B \to R_i$ such that $X_i \cong Z \times_{S,f_i} \Spec R_i$ as a deformation of $X_{i-1}$ (and therefore, $\wy$) over $\Spec R_i$.

Since $B$ is a free $k$-superalgebra, there exists a map $\tau: B \to R_{i+1}$ restricting to $f_i: B \to R_i$.

Then $Z \times_{S,\tau} R_{i+1}$ is an infinitesimal deformation of $X_{i}$ over $R_{i+1}$, which is glued along the intersection of the affine charts $U' \cong \on{Spec} R_{i+1}[z \, | \,\zeta], V' \cong \on{Spec} R_{i+1}[w \, | \,\chi]$ by the transition functions
\begin{align*} 
w & \mapsto 1/z,\\
\nonumber \chi & \mapsto \zeta z^{\nr/2-1} + \sum_{j=1}^{\nr/2-2} \tau(\eta_j) z^{\nr/2-1-j}.
\end{align*} 

Since set of isomorphism classes of deformations of $X_i$ over $R_{i+1}$ is a torsor over $H^1(X_i, \mc{T}_{X_i/ R_i}) \linebreak[0] \otimes_{R_i} \mc{J}^{i+1}/\mc{J}^{i+2} $, which is isomorphic to $ H^1(\wy, \mc{T}_{\wy}) \otimes_{k} \mc{J}^{i+1}/\mc{J}^{i+2}$ because of the base change theorem, cf.\ \cite[Theorem 8.5.9 (b) and Remark 8.5.10 (b)]{FGA}, with a base point given by $\tau$ just above, there exist $d_j \in \mc{J}^{i+1}/\mc{J}^{i+2}$ for $j = 1, \dots, \nr/2-2$ such that
\begin{align*} 
w & \mapsto 1/z, \\
\nonumber \chi & \mapsto \zeta z^{\nr/2-1} + \sum_{j=1}^{\nr/2-2} (\tau(\eta_j) + d_j) z^{\nr/2-1-j}
\end{align*} 
are the gluing functions of $X_{i+i}$. 
Define $f_{i+1}: R \to R_{i+1}$ to send $\eta_j \to \tau(\eta_j) + d_j$. Then $Z \times_{S,f_{i+1}} R_{i+1} \cong X_{i+1}$. This completes the induction step.
\end{proof}

\subsubsection*{Projective embeddings of $Z$}

We will look at couple of useful closed embeddings of $Z$ into weighted projective superspaces over $S$. Let us start with general definitions.

We say that a superscheme $\rho: X \to T$ over a superscheme $T$ is \emph{projective} if $X$ admits a closed immersion into a super projective bundle $\Proj \mc{S}(\mc{E})$ for some quasi-coherent, finite-type sheaf $\mc{E}$ over $T$. A line bundle (\emph{i.e}., invertible sheaf) $\mc{L}$ over $X$ is \emph{very ample relative to} $T$, if there is an immersion $i: X \to \Proj \mc{S}(\mc{E})$ over $T$ such that $\mc{L} \cong i^* \mc{O}_{\Proj \mc{S}(\mc{E})} (1)$ for a quasi-coherent sheaf $\mc{E}$ on $T$. Under certain assumptions, see the proof of Proposition \ref{zisprojective}.2, we will have $\mc{E} \cong \rho_* \mc{L}$,
the sheaf $\mc{E}$ will be locally free, the immersion $i: X \to \Proj \mc{S}(\mc{E})$ will be associated with the line bundle $\mc{L}$, and $X$ will be projective over $T$.

\begin{proposition} \label{zisprojective} 
\begin{enumerate}
    \item 
The supercurve $Z/S$ is a closed subsuperscheme of $\mathbb{W} \mathbb{P}^{1|2}_S(1, 1 \, |\,  0, 0) = \mathbb{P}^1_S \times_S \mathbb{A}^{0|2}_S \cong \mathbb{P}^1 \times \mathbb{A}^{0|\nr/2}$ over $S$. 
\item 
The line bundle $\mc{O}_Z(1)$ defines an embedding of $Z$ as a closed subsuperscheme into the projective superspace $\pp_S^{1\, | \, \nr/2 + 1} = \pp^{1\, | \, \nr/2 + 1} \times S$ over $S$.
\end{enumerate}
\end{proposition}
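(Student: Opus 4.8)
The plan is to handle the two parts separately, in each case building the embedding by hand from the gluing data \eqref{gluingformula1}--\eqref{gluingformula} and then verifying the closed-immersion property on the two standard affine charts $U,V$ of $Z$.

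For part (1), I would first produce two global odd functions on $Z$. The gluing \eqref{gluingformula} shows that $\zeta$ on $U$ equals $\chi w^{\nr/2-1}-\sum_{i=1}^{\nr/2-2}\eta_i w^i$ on $V$, and \eqref{gluingformula1} shows that $\chi$ on $V$ equals $\zeta z^{\nr/2-1}+\sum_{i=1}^{\nr/2-2}\eta_i z^{\nr/2-1-i}$ on $U$; since $\nr\ge 4$ all exponents of $w$ (resp.\ $z$) appearing are $\ge 0$, so both expressions are regular and define global odd functions $F_1,F_2\in\Gamma(Z,\mc{O}_Z)$. Sending the even $\pr$-coordinate via $z\mapsto t$, $w\mapsto 1/t$, and sending the two odd coordinates $\psi_1,\psi_2$ of $\mathbb{A}^{0|2}_S$ to $F_1,F_2$, gives an $S$-morphism $Z\to \pr_S\times_S\mathbb{A}^{0|2}_S$. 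To see it is a closed immersion it suffices to check surjectivity of the comorphism over the two standard charts of $\pr_S$, whose preimages are $U$ and $V$: over $U$ the comorphism sends the affine coordinate to $z$ and $\psi_1$ to $\zeta$, hence surjects onto $B[z\,|\,\zeta]$, while over $V$ it sends the affine coordinate to $w$ and $\psi_2$ to $\chi$, hence surjects onto $B[w\,|\,\chi]$. As $U,V$ cover $Z$, this proves (1). Note that both odd coordinates are genuinely needed, since $\psi_1=F_1$ alone does not recover $\chi$ on $V$.

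For part (2), I would first compute $\pi_*\mc{O}_Z(1)$ for $\pi:Z\to S$. Restricting to the central fiber $\eta=0$ recovers $\wy$ and $\mc{O}_{\wy}(1)$, for which $H^0$ has the basis $\{u,v\}\cup\{u^{\nr/2-i}v^i\theta\mid 0\le i\le \nr/2\}$ of dimension $(2\,|\,\nr/2+1)$; moreover $H^1(\wy,\mc{O}_{\wy}(1))=0$, since by \eqref{wm-functions} one has $\mc{O}_{\wy}(1)=\mc{O}_{\pr}(1)\oplus\Pi\mc{O}_{\pr}(\nr/2)$ with $\nr/2\ge 2$. By cohomology and base change (as in \cite[Theorem 8.5.9]{FGA}, already used in the proof of Theorem \ref{versal}), $\pi_*\mc{O}_Z(1)$ is therefore locally free of rank $(2\,|\,\nr/2+1)$ and its formation commutes with base change, so a choice of $B$-basis identifies $\Proj \mc{S}(\pi_*\mc{O}_Z(1))$ with $\pp_S^{1\,|\,\nr/2+1}$.

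It then remains to show that the $S$-morphism $g:Z\to \pp_S^{1\,|\,\nr/2+1}$ associated with $\mc{O}_Z(1)$ is a closed immersion, and this is the step I expect to be the main obstacle. The strategy is to reduce to the central fiber. First, global generation of $\mc{O}_{\wy}(1)$ together with base change (Nakayama over the nilpotent base) shows $\mc{O}_Z(1)$ is relatively globally generated, so $g$ is defined. Its restriction $g_0:\wy\to \pp^{1\,|\,\nr/2+1}$ is the complete embedding by $\mc{O}_{\wy}(1)$, which one checks directly on charts is a closed immersion (on $U$ it is $(z\,|\,\zeta)\mapsto (z\,|\,\zeta,z\zeta,\dots,z^{\nr/2}\zeta)$, whose comorphism surjects onto $B[z\,|\,\zeta]$). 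Because $S=\Spec B$ with $B=k[\eta_1,\dots,\eta_{\nr/2-2}]$ a local Artinian $k$-superalgebra with nilpotent maximal ideal and residue field $k$, the family $Z\to S$ is a nilpotent thickening of $\wy\to \Spec k$; hence surjectivity of $\mc{O}_{\pp_S^{1|\nr/2+1}}\to g_*\mc{O}_Z$ can be checked modulo the nilpotent ideal by Nakayama, where it holds by the central-fiber computation, and $g$ is a homeomorphism onto a closed subset since $g_0$ is. The delicate points are the validity of cohomology-and-base-change and of this Nakayama descent of closed immersions over a non-reduced, purely odd super base; both are standard once transcribed to the super setting, but they are where the real work lies.
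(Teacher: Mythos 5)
Your proposal is correct, and its relation to the paper's proof differs between the two parts. For part (1) you are doing essentially what the paper does: the paper also reads off from \eqref{gluingformula1} that $\zeta$ and $\chi$ extend to global odd functions on $Z$ (your $F_1,F_2$), and it realizes $Z$ as the explicit hypersurface
\[
Z \cong \on{Proj}\, B[u,v\,|\,\zeta,\chi]\Big/\Big(u^{\nr/2-1}\chi - v^{\nr/2-1}\zeta - \textstyle\sum_{i=1}^{\nr/2-2}\eta_i u^i v^{\nr/2-1-i}\Big)
\]
inside $\pp^1_S\times_S\mathbb{A}^{0|2}_S$; your chart-by-chart surjectivity check establishes the closed immersion without naming the defining equation, which is if anything a more careful verification of the paper's one-line ``therefore,'' at the cost of not exhibiting the image. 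For part (2) you genuinely diverge. The paper invokes Codogni's relative embedding criterion \cite[Criterion 3]{codogni}: after the same cohomology-and-base-change argument showing $\pi_*\mc{O}_Z(1)$ is free of rank $(2\,|\,\nr/2+1)$, it only has to check $R^1$-vanishing on the bosonic central fiber for $\mc{O}_Z(1)_{\bos}\otimes I_x\otimes I_y \cong \mc{O}_{\pp^1}(-1)$ and $\mc{O}_Z(1)_{\bos}\otimes\mc{J}\otimes I_x \cong \mc{O}_{\pp^1}(\nr/2-1)$. You instead reduce the closed-immersion property itself to the central fiber by Nakayama over the Artinian base $B$. Your route is more elementary and self-contained, but it exploits the special feature that $|S|$ is a single point, so $Z\to S$ is a nilpotent thickening of $\wy$; Codogni's criterion costs an external citation but works over an arbitrary smooth base, which is the generality in which the paper quotes it. The one step you should make explicit in the Nakayama descent is why $g_*\mc{O}_Z$ is coherent and why its reduction mod $\mathfrak{m}_B$ computes $(g_0)_*\mc{O}_{\wy}$, since pushforward does not commute with base change in general: here it does because $g$ is proper and injective on points, hence finite, hence affine, so $g_*$ is exact and commutes with the reduction, and Nakayama applies to the coherent $\mc{O}_{\pp^{1|\nr/2+1}_S}$-algebra $g_*\mc{O}_Z$. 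With that spelled out, your argument is complete.
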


\begin{proof} 
1. From the gluing description \eqref{gluingformula1} of the deformation $Z$ of $\wy$ over $S$ with trivializing cover $\mc{U}=\{ U=\on{Spec}B[z | \zeta], V = \on{Spec}B[w | \chi]\}$, 
and patching together along  $U \cap V$ via the automorphism of $Z \vert_{U \cap V}$ sending
\begin{align}
w & \mapsto 1/z, \\
\nonumber \chi & \mapsto \zeta z^{\nr/2-1} + \sum_{i=1}^{\nr/2-2} \eta_i z^{\nr/2-1-i}, \end{align} 
we find that
\begin{align}
\chi & = \zeta (v/u)^{\nr/2-1} + \sum_{i=1}^{\nr/2-2} \eta_i (v/u)^{\nr/2-1-i}, \\
\nonumber u^{\nr/2-1} \chi & = v^{\nr/2-1} \zeta + \sum_{i=1}^{\nr/2-2} \eta_i u^i v^{\nr/2-1-i}.
\end{align}
Therefore, 
\[
Z \cong \on{Proj} B[u,v | \zeta, \chi]/ ( u^{\nr/2-1} \chi - v^{\nr/2-1} \zeta - \sum_{i=1}^{\nr/2-2} \eta_i u^i v^{\nr/2-1-i}),
\] 
where the even coordinates $u,v$ have degree one and the odd coordinates $\zeta, \chi$ have degree zero. 
The proof of the proposition then follows from the identification of $\on{Proj}B[u,v|\zeta, \chi]$ with $\mathbb{P}^1_S \times_S \mathbb{A}_S^{0|2}$.
\smallskip

\noindent
2. To see that $Z/S$ is projective and the sheaf $\mc{O}_Z(1)$ is very ample relative to $\pi: Z \to S$, 
we will use Codogni's criterion \cite[Criterion 3]{codogni}, based on an argument of \cite[Theorem 1]{LBPW1990proj}:
\begin{quote}
    Suppose $\rho: X \to T$ is a supercurve over a smooth base $T$ and $\mc{L}$ is a rank-$(1|0)$ line bundle over $X$ such that $\rho_* \mc{L}$ is locally free on $T$. Let $\mc{J} = \mc{J}/\mc{J}^2$ be the odd-function ideal of the pullback $X_{\on{split}} := X \times_T T_{\bos}$ of $X$ to $T_{\bos} \hookrightarrow T$. The supercurve $X_{\on{split}}$ over $T_{\bos}$ will be split with structure sheaf $\mc{O}_{X_{\bos}} \oplus \mc{J} =  (\mc{O}_{X_{\on{split}}}/\mc{J}) \oplus \mc{J}$. If, for every pair of closed points $x, y$ of $X_{\bos}$ such that $\rho^{\bos}(x) = \rho^{\bos}(y)$, we have
\[    
    R^1 \rho^{\bos}_* (\mc{L}_{\bos} \otimes I_x \otimes I_y) = 0
\]
and
\[    
    R^1 \rho^{\bos}_* (\mc{L}_{\bos} \otimes \mc{J} \otimes I_x) = 0,
\]
where $I_z$ is the ideal sheaf of a closed point $z$ of $X_{\bos}$, then $\mc{L}$ defines a closed immersion $X \to \Proj \mc{S}(\rho_*\mc{L})$ to a relative projective superspace over $T$. In particular, $X$ will be relatively projective and $\mc{L}$ will be relatively very ample.
\end{quote}

To apply this criterion, we need to check that the sheaf $\pi_* \mc{O}_Z(1)$ is locally free. In fact, it is locally free of the same rank $(2\, | \, \nr/2+1)$ as the dimension of the super vector space $H^0 (\wy, \mc{O}_{\wy}(1))$, because of the standard argument of ``cohomology and base change'', given that $H^1 (\wy, \mc{O}_{\wy}(1)) = 0$. The reason for this vanishing is identification $\mc{O}_{\wy} (1)_{\bos} \cong \mc{O}_{\pp^1}(1)$ and $\mc{J}(1) \cong \mc{O}_{\pp^1}(\nr/2)$ by \eqref{wm-functions}.  Then, in our case, $S_{\bos} = \Spec k$, $Z_{\on{split}} = \wy$, and $Z_{\bos} = \pp^1$ with $\mc{O}_Z (1)_{\bos} \cong \mc{O}_{\pp^1}(1)$ and $\mc{J} \cong \mc{O}_{\pp^1}(\nr/2-1)$. Thus, $\mc{O}_Z (1)_{\bos} \otimes I_x \otimes I_y \cong \mc{O}_{\pp^1} (-1)$ and $\mc{O}_Z (1)_{\bos} \otimes \mc{J} \otimes I_x \cong \mc{O}_{\pp^1} (\nr/2-1)$. The first cohomology group of both shaves vanishes, and Codogni's criterion applies.

Finally,  the local triviality of $\pi_* \mc{O}_Z(1)$ implies its triviality, because $S$ has just one point. Hence, $\Proj \mc{S}(\pi_* \mc{O}_Z(1)) \cong \pp_S^{1\, | \, \nr/2 + 1}$.
\end{proof}

The proposition may be visualized according to the following diagram: 
\[
\begin{tikzcd}
Z \arrow[rd] \arrow[rr, hook] &                                             & {\mathbb{W} \mathbb{P}^{1|2}_S(1, 1 \, |\,  0, 0) = \pp H^0(Z,\mc{O}_Z(1))}  \arrow[ld] \\
                              & S=\operatorname{Def}(\mathbb{W} \mathbb{P}) &                                                              
\end{tikzcd}
\]

\subsection{The moduli superstack $M(1-\nr/2)$ of supercurves}
\label{construction}

In this section, we will prove that the moduli superstack $M(1-\nr/2)$ of supercurves of weight $1-\nr/2$ is algebraic and isomorphic to the quotient superstack $[S/\awy]$. Here and henceforth, set
\[
M:= M(1-\nr/2)
\]
to be the moduli superstack of genus zero supercurves of weight $1-\nr/2$, see Definition \ref{Mm}.

\subsubsection*{A formally smooth cover of $M(1-\nr/2)$}

The first step will be to construct a certain morphism $S \to M = M(1-\nr/2)$ and prove that it is formally smooth (and, in fact, a smooth cover). 
Using the 2-Yoneda lemma \cite[Section 3.2]{olsson2016algebraic}, we can identify the groupoid $M(T)$ (for any superscheme $T$) with the groupoid of superstack morphisms $T \to M$, up to canonical equivalence. In particular, the object $Z \in M(S)$ constructed earlier corresponds to a certain morphism $S \to M$.

\begin{definition}
Let $U$ be a superscheme and $N$ a superstack. We say that
a superstack morphism $U \to N$ is \emph{formally smooth}, if the following condition holds:
\begin{quote} 
For every affine superscheme $T$ with a morphism $T \to N$ and every closed immersion $T_0 \hr T$ defined by a nilpotent ideal,
there exists a map (not necessarily unique) filling in the dotted arrow of the diagram 
\begin{equation}\label{smoothness}
\begin{tikzcd}
T_0 \arrow[d, hook] \arrow[r]     & U \arrow[d] \\
T \arrow[r] \arrow[ru, dotted] & N
\end{tikzcd}
\end{equation} 
and making the resulting two triangles commute. 
\end{quote} 
\end{definition}

Applying this definition to the superstack morphism $S \to M$, observe that since $T$ is a nilpotent extension of $T_0$, the supercurve associated to the morphism $T \to M$ is a deformation of the supercurve associated to the morphism $T_0 \to T \to M$.

\begin{theorem} \label{formalsmooth} The morphism $ S \to M(1-\nr/2)$ is formally smooth. 
\end{theorem}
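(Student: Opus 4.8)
The plan is to recognize the lifting problem in diagram \eqref{smoothness} as a deformation-theoretic extension problem and to solve it by reducing to square-zero extensions, where the Kodaira--Spencer map of the universal family $Z/S$ furnishes the required lift. Concretely, a morphism $T \to M$ is a family $X/T$ of weight $1-\nr/2$ supercurves; a morphism $T \to S = \Spec B$ is, since $B$ is freely generated over $k$ by the odd coordinates $\eta_1, \dots, \eta_{\nr/2-2}$, the same as a choice of odd elements $r_j \in \Gamma(T, \mc{O}_T)^-$; and commutativity of the lower triangle means precisely that the given map $g_0\colon T_0 \to S$ satisfies $g_0^* Z \cong X|_{T_0}$. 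I must extend $g_0$ to $g \colon T \to S$ with $g^* Z \cong X$.

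First I would reduce to the square-zero case. Writing $R = \Gamma(T, \mc{O}_T)$ and $I \subset R$ for the nilpotent ideal with $R/I = \Gamma(T_0, \mc{O}_{T_0})$, I factor $R \to R/I$ through the finite chain of square-zero extensions $R = R/I^N \to \dots \to R/I^2 \to R/I$ (with $I^N = 0$) and lift $g_0$ one step at a time, ensuring at each stage that the classified family agrees with the corresponding restriction of $X$. Thus it suffices to treat a single square-zero extension $R' \twoheadrightarrow R$ with kernel $I$, $I^2 = 0$: given $g \colon \Spec R \to S$ with $g^* Z \cong X_0 := X|_{\Spec R}$, I must find a lift $g' \colon \Spec R' \to S$ with $g'^* Z \cong X' := X|_{\Spec R'}$.

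The heart of the argument is a comparison of two torsors. On one hand, since $S$ is a linear affine superspace with tangent space canonically $H^1(\wy, \mc{T}_{\wy})$, the set of lifts $g'$ of $g$ is a nonempty torsor under $H^1(\wy, \mc{T}_{\wy}) \otimes_k I$ (lifting is unobstructed because $R' \to R$ is surjective and the $\eta_j$ satisfy no relations). On the other hand, both $g'^* Z$ and $X'$ are deformations of $X_0$ over $R' \to R$, and the set of isomorphism classes of such deformations is a torsor under $H^1(X_0, \mc{T}_{X_0/R}) \otimes_R I$, nonempty because $X'$ exists, which by cohomology and base change (\cite[Theorem 8.5.9 (b) and Remark 8.5.10 (b)]{FGA}, exactly as in the proof of Theorem \ref{versal}) is canonically $H^1(\wy, \mc{T}_{\wy}) \otimes_k I$. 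I would then check that $g' \mapsto [g'^* Z]$ is a morphism of torsors lying over the Kodaira--Spencer map of $Z/S$. Reading off the gluing data \eqref{gluingformula1}, a change $\eta_i \mapsto \eta_i + \delta_i$ with $\delta_i \in I$ alters the transition function $\chi$ by $\sum_i \delta_i z^{\nr/2-1-i}$, i.e.\ by the \v{C}ech cocycle $\sum_i \delta_i z^{-i}\delp{\zeta}$; by Lemma \ref{infdef} this is exactly the tautological identification of the directions $\delp{\eta_i}$ with the basis $z^{-i}\delp{\zeta}$ of $H^1(\wy, \mc{T}_{\wy})$. Hence the Kodaira--Spencer map is an isomorphism, the torsor morphism $g' \mapsto [g'^* Z]$ is a bijection, and there is a lift $g'$ with $[g'^* Z] = [X']$. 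Iterating over the filtration completes the proof.

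The step I expect to be the main obstacle is the identification in the last paragraph of the derivative of the classifying map $g' \mapsto [g'^* Z]$ with the Kodaira--Spencer isomorphism: this is where the full force of Theorem \ref{versal} and the construction of $S = \mathbb{H}^1(\mc{T}_{\wy})$ is used, and where the super-parity bookkeeping in the two torsor identifications must be handled with care. The vanishing $H^2(\wy, \mc{T}_{\wy}) = 0$ guarantees that the target torsor of deformations is always nonempty, which is what makes each square-zero step solvable and lets the induction go through.
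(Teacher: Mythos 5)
Your proposal is correct, and it reaches the paper's conclusion by a genuinely different packaging of the same underlying computation. The paper also reduces to square\--zero extensions, but then splits the problem along the even/odd structure of the base: when $T$ is purely even, the map $T_0 \to S$ factors through $S_{\bos} = \Spec k$, and the vanishing $H^1(\wy, \mc{T}_{\wy})^+ = 0$ (no even deformations) forces $X \cong \wy \times T$, so the constant map lifts; in the general case the paper then pivots to the odd square\--zero extension $T_{\bos} \hookrightarrow T$, uses the even case to trivialize $X \times_T T_{\bos}$, reads off the gluing cocycle of $X$ as a deformation of $\wy \times T_{\bos}$, and defines the lift $T \to S$ in one shot by sending $\eta_i$ to the cocycle coefficients $d_i$. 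You instead run a uniform induction over the powers of the given ideal $I$, and at each square\--zero step compare two torsors --- lifts of the classifying map (unobstructed since $B$ is free on odd generators) versus deformations of $X_0$ --- via the Kodaira--Spencer map, which the gluing formula \eqref{gluingformula1} and Lemma \ref{infdef} exhibit as the tautological isomorphism. What your route buys: no even/odd case distinction, and, because you work with deformations of $X_0$ relative to the given identification over $T_0$, the compatibility of the constructed lift with $g_0$ (the upper triangle of \eqref{smoothness}) holds by construction --- a point the paper's direct construction of $f$ leaves implicit. What the paper's route buys: it isolates exactly where the geometry of $\wy$ enters (rigidity in even directions, explicit odd deformations) and avoids setting up the torsor\--morphism formalism.

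One justification in your sketch needs tightening, and it is the one you flagged as the main obstacle. The identification $H^1(X_0, \mc{T}_{X_0/R}) \otimes_R I \cong H^1(\wy, \mc{T}_{\wy}) \otimes_k I$ is \emph{not} literally ``as in the proof of Theorem \ref{versal}'': there the relevant ideal is killed by the odd nilpotents, so it is a module over the purely even ring $R_{\bos}$, over which the restricted family is the \emph{trivial} one $\wy \times \Spec R_{\bos}$; in your setting $X_0 \cong g^*Z$ is a nontrivial pullback and $I$ is a module over the possibly non\--even ring $R$. The correct argument is the one your cocycle computation implicitly supplies: since the relative dimension is $(1|1)$, the higher direct image $R^1\pi_*\mc{T}_{Z/S}$ commutes with arbitrary base change and is free over the local Artinian ring $B$, and by Nakayama the cocycles $z^{-i}\delp{\zeta}$ --- which make sense on $Z$, not merely on $\wy$ --- form a $B$-basis; base\--changing along $g$ then gives both the identification and the equivariance of $g' \mapsto [g'^*Z]$. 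Also, your closing remark that $H^2(\wy, \mc{T}_{\wy})=0$ makes the deformation torsor nonempty is misplaced: nonemptiness comes simply from $X'$ being a point of it, while $H^2$-vanishing (automatic here) is never needed. Neither issue breaks the argument; with the Nakayama step made explicit, your torsor comparison closes the proof.
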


\begin{proof}
Given a diagram
\begin{equation*}
\begin{tikzcd}
T_0 \arrow[d, hook] \arrow[r]     & S \arrow[d] \\
T \arrow[r]  & M,
\end{tikzcd}
\end{equation*}
take $X/T$ and $X_0/T_0$ to be the supercurves associated to the maps $T \to M$ and $T_0 \to T \to M$, respectively. We may assume that $T$ is affine and $T_0 \hr T$ is a closed immersion defined by a square-zero ideal. Indeed, we can factor an immersion given by an arbitrary nilpotent ideal into a sequence of immersions $T_i$ defined by square-zero ideals, as we did in the proof of Theorem \ref{versal}. Since the induction step will be starting with a trivial deformation $X_i \cong \wy \times (T_i)_{\bos}$, there will be no difference in treating the induction step for $i=0$ and $i \ge 1$, as opposed to the argument of Theorem \ref{versal}.

We first consider the case when $T$ is an ordinary affine scheme. Since $T_0$ has no odd functions, the map $T_0 \to S$ factors as
\[ T_0 \to S_{\bos}=\on{Spec} k \to S, \]
whence we are left with the diagram
\begin{equation*}
\begin{tikzcd}
T_0 \arrow[d, hook] \arrow[r]    & \on{Spec} k \arrow[d] \\
T \arrow[r] \arrow[ru, dotted, "t"] & M,
\end{tikzcd}
\end{equation*}
where $t: T \to \on{Spec} k$ is the unique map from $T$ to the point. 

To the map $\Spec k \to M$ is associated the object $\wy \in 
M(\Spec k)$. This is because we constructed $Z$ as an odd deformation of $\wy$. Moreover, we have $X_0 \cong \wy \times T_0$, because $T_0 \to M$ factors through $\Spec k$.  Our goal now is to show that $X \cong \wy \times T$ as this will immediately imply that the map $t$ fills in the dotted arrow of the above diagram: indeed,
the pullback of $Z \vert_{\on{Spec} k}$ to $T$ by $t$ is isomorphic to $\wy \times T$.

Since $T_0 \to T$ is an even extension of $T_0$, meaning that the ideal defining $T_0$ contains no odd elements, the supercurve $X$ must be isomorphic to an even deformation of $\wy \times T_0$ over $T$. This means that we can identify $X$ with a class in $H^1(\wy \times T_0, \mc{T}_{\wy \times T_0/T_0})^+$, given that the trivial deformation may be used as a base point of the deformation torsor. Since $H^1(\wy \times T_0, \linebreak[0] \mc{T}_{\wy \times T_0/T_0})^+ \linebreak[1] = 0$, the superscheme $\wy \times T_0$ has no nontrivial deformations over $T$. In particular, we must have $X \cong \wy \times T$. 

Let us now turn to the general case when $T$ is an affine superscheme. 
Consider the diagram
\begin{equation*}
\begin{tikzcd}
(T_0)_{\bos} \arrow[r, hook] \arrow[d, hook] & T_0 \arrow[d, hook] \arrow[r] & S \arrow[d]  \\
\sbs \arrow[r, hook] & T \arrow[r] 
& M.        
\end{tikzcd}
\end{equation*}

Since $T$ is Noetherian, the odd-nilpotent ideal $\mc{J}_{T}$ defining $\sbs \hookrightarrow T$ is nilpotent, and we may assume that $\mc{J}_{T}^2=0$. This is because we can always factor $\mc{J}_{T}$ into a finite composition of square-zero ideals, as in the proof of Theorem \ref{versal}. Furthermore, since
\[ X \times_{T} \sbs \cong \wy \times \sbs, \]
we can treat $X/T$ as a deformation of $\wy \times \sbs$ over $T$, and thus we can identify $X/T$ with a class $[X] \in
H^1( \wy \times \sbs, \mc{T}_{\wy \times \sbs/\sbs})$. 

We can read off the 
gluing functions of
$X/T$ by representing the class $[X]$ by a \v{C}ech co-cycle 
\[
\sum_{i=1}^{\nr/2-2} d_i z^{-i} \frac{\partial}{\partial z}
\]
in the standard two charts of $\wy$, again as in the proof of Theorem \ref{versal}:

\begin{align*} 
w & \mapsto 1/z \\
\nonumber \chi & \mapsto \zeta z^{\nr/2-1} + \sum_{i=1}^{\nr/2-2} d_i z^{\nr/2-1-i}. 
\end{align*} 
where $d_i  \in \mc{J}_{T}$. 

Then the map $f: T \to S$ defined by $\eta_i \mapsto d_i$ for all $i$ will pull the family $Z$, defined by the gluing functions \eqref{gluingformula1}, to $X$. Therefore, $f$ will fill in the dotted arrow of the commutative diagram \eqref{smoothness}. 
\end{proof}

\subsubsection*{Representability of the diagonal}

Here we will show that the diagonal morphism \[ \Delta: M \to M \times M \] is representable by superschemes, \emph{i.e}., for any superscheme $T$ and morphism $T \to M \times M$, the pullback $T \times_{M \times M} M$ is a superscheme, see Corollary \ref{diagonalrep}. 

The morphism $T \to M \times M$, as a morphism of superstacks, can be identified with a pair of objects $X_1, X_2 \in M(T)$ and an isomorphism $\sigma: X_1 \lgr X_2$ in $M(T)$. The functor
\[
\Isom_T (X_1, X_2): \sS/T \to \on{Set}
\]
sending $T' \to T$ to the set $\on{Isom}_{T'}(X_1 \times_T {T'}, X_2 \times_T {T'})$ of $T'$-isomorphisms, see \cite[3.4.7]{olsson2016algebraic}, is a sheaf on $\sS/T$ isomorphic to the pullback $T \times_{M \times M} M$. We, therefore, must show that the sheaf $\Isom_T(X_1, X_2)$ is representable by a $T$-superscheme.

Let us first address a particular case of this for $T = S$ and $X_1 = X_2 = Z$. Then we will show that the general case follows.

\begin{theorem}
\label{aut-rep}
The functor $\on{Aut}_S(Z): \sS/S \to \on{Group}$ sending $T \to S$ to the group of automorphisms of $Z \times_{S} T$ over $T$ is representable. 
\end{theorem}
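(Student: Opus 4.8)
The plan is to represent $\on{Aut}_S(Z)$ as the stabilizer of $Z$ inside a projective linear supergroup, exploiting the closed embedding furnished by Proposition \ref{zisprojective}.2. By that proposition, $\mc{O}_Z(1)$ embeds $Z$ as a closed subsuperscheme $\iota: Z \hr \pp_S^{1\,|\,\nr/2+1} = \pp^{1\,|\,\nr/2+1} \times S$ over $S$ via the complete linear system $|\mc{O}_Z(1)|$, with $\pi_* \mc{O}_Z(1)$ free of rank $(2\,|\,\nr/2+1)$. Let $G := \Aut_S(\pp_S^{1\,|\,\nr/2+1})$ be the affine group superscheme of $S$-automorphisms of this bundle, namely the projective linear supergroup $\on{PGL}(2\,|\,\nr/2+1) \times_k S$, acting on $\pp_S^{1\,|\,\nr/2+1}$ over $S$. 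I would produce a natural isomorphism of functors $\Aut_S(Z) \cong \on{Stab}_G(Z)$ and then invoke representability of the stabilizer.

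The geometric heart is to show that every relative automorphism of $Z$ is projective-linear, i.e.\ extends to an element of $G$. This is the relativization over $S$ of the surjectivity argument in Theorem \ref{rep}. Fix $T \to S$ and $\alpha \in \Aut_T(Z_T)$, where $Z_T := Z \times_S T$. Every geometric fiber of $Z_T/T$ is isomorphic to $\wy$, whose Picard group is $\Z$ generated by $\mc{O}(1)$ (Lemma \ref{Picard}); relativizing this computation exactly as for $\wy \times T$ in Theorem \ref{rep} shows that every line bundle on $Z_T$ is isomorphic to $\mc{O}_{Z_T}(l) \otimes \on{pr}_T^* \mc{L}$ for a unique $l \in \Z$ and a line bundle $\mc{L}$ on $T$. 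Applying this to $\alpha^* \mc{O}_{Z_T}(1)$ and comparing the ranks of the pushforwards to $T$ (the sheaves $\pi_* \mc{O}_Z(l)$ being locally free, as in the proof of Proposition \ref{zisprojective}.2) forces $l = 1$, so that $\alpha^* \mc{O}_{Z_T}(1) \cong \mc{O}_{Z_T}(1) \otimes \on{pr}_T^* \mc{L}$. Pulling back sections and trivializing $\mc{L}$ Zariski-locally on $T$ then yields an $\mc{O}_T$-linear automorphism of $\pi_* \mc{O}_Z(1) \otimes_B \mc{O}_T$, well defined up to a global unit and hence determining an element $g \in G(T)$. Since $\iota$ is the embedding associated with $\mc{O}_Z(1)$, the transformation $g$ preserves $Z_T$ and restricts to $\alpha$; conversely, any $g \in G(T)$ preserving $Z_T$ restricts to an automorphism of $Z_T/T$. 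This gives the desired identification $\Aut_S(Z) \cong \on{Stab}_G(Z)$.

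It then remains to represent the stabilizer. The subsuperscheme $Z \subset \pp_S^{1\,|\,\nr/2+1}$ is closed and proper over $S$, and $G$ is an affine group superscheme acting on $\pp_S^{1\,|\,\nr/2+1}$ over $S$. The transporter functor $T \mapsto \{\, g \in G(T) : g \cdot Z_T \subseteq Z_T \,\}$ is representable by a closed subsuperscheme of $G$, by the super analogue of the classical representability of transporters of a proper closed subscheme under an action of an affine group scheme (the same mechanism that makes $\on{Isom}$-schemes of projective morphisms representable); the essential input is the properness of $Z/S$. The stabilizer $\on{Stab}_G(Z)$ is then cut out as the intersection of this transporter with its image under inversion, hence is a closed subgroup superscheme of $G$. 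Combined with the identification of the previous paragraph, this shows that $\Aut_S(Z)$ is representable.

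I expect the main obstacle to be precisely this last step: a careful super-geometric proof that the transporter (equivalently, the stabilizer) of a proper closed subsuperscheme under an action of an affine group superscheme is representable by a closed subsuperscheme. This requires a super analogue of the SGA3-type transporter representability rather than the formal manipulations of the first two paragraphs. A secondary subtlety, handled above by passing to the \emph{projective} (rather than general) linear supergroup, is the bookkeeping of the twist by the line bundle $\mc{L}$ on the base, which plays here the role that the subgroup $\Gamma^*$ played in Theorem \ref{rep}.
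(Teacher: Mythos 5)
Your proposal follows essentially the same route as the paper: both use the closed embedding $Z \hr \pp_S^{1\,|\,\nr/2+1}$ furnished by Proposition \ref{zisprojective}.2, extend any relative automorphism of $Z$ to the ambient projective superspace via the $\mc{O}_Z(1)$-invariance and pushforward-rank argument relativizing Theorem \ref{rep}, and then conclude representability of $\Aut_S(Z)$ inside $\Aut_S(\pp_S^{1\,|\,\nr/2+1})$. The only divergence is in the final bookkeeping: where you realize $\Aut_S(Z)$ as the stabilizer $\on{Stab}_G(Z)$ and flag the need for a super transporter-representability result, the paper identifies it as a ``subquotient'' of $\Aut_S(\pp_S^{1\,|\,\nr/2+1})$ and defers the representability of that group superscheme and its quotients to the cited literature.
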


\begin{proof}

Note that when we talk about an automorphism, we will mean a $T$-point of $\on{Aut}_S(Z)$ for a ``probe'' superscheme $T$ over $S$.

Recall that $Z$ may be embedded into $\pp^{1\, |\, \nr/2+1}_S$, so that the pullback $\mc{L}$ of $\mc{O}(1)$ from $\pp^{1\, |\, \nr/2+1}_S$ is isomorphic to $\mc{O}_Z (1)$, see Proposition \ref{zisprojective}.2.

Just as in the proof of Theorem \ref{rep}, locally on $T$, the line bundle $\mc{O}_Z(1)$ becomes invariant, up to isomorphism, under a given automorphism $\alpha$ of $Z/S$, \emph{i.e}., $\alpha^* \mc{O}_Z(1) \cong \mc{O}_Z(1)$, and so does the group $H^0 (Z, \mc{O}_Z(1))$. Therefore, the automorphism $\alpha$ induces an $S$-automorphism of the homogeneous ring $\bigoplus_{m \in \Z} H^0 (Z, \mc{O}_Z(m))$ of $\pp^{1\, |\, \nr/2+1}_S$, and this automorphism gives an $S$-automorphism of the ambient $\pp^{1|\nr/2+1}_S$. This implies that the automorphisms of $Z/S$ extend to automorphisms of $\pp_S^{1 | \nr/2+1}/S$, and hence, $\Aut _S (Z)$ may be identified with a subquotient of $\on{Aut}_S  (\mathbb{P}_S^{1 \,| \, \nr/2+1})$. The functor $\on{Aut}_S(Z)$ is, therefore, representable by a subquotient of  $\on{Aut}_S (\mathbb{P}_S^{1 \, |\, \nr/2+1})$, which is an algebraic group superscheme as per \cite{fioresi2015projective, fioresi2021quotients}. 
\end{proof}

\begin{corollary}
\label{diagonalrep}
The diagonal map $\Delta: M \to M \times M $ is representable by superschemes.
\end{corollary}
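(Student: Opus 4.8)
The plan is to translate representability of the diagonal into representability of all $\Isom$-sheaves, and then, for an arbitrary pair of objects, to run the same projective-embedding argument that proved Theorem~\ref{aut-rep}. Unwinding the definitions (as in the paragraph preceding Theorem~\ref{aut-rep}), a morphism $T \to M \times M$ is a pair $X_1, X_2 \in M(T)$, and the fiber product $T \times_{M \times M} M$ is canonically the sheaf $\Isom_T(X_1, X_2)$ on $\sS/T$. So the task is to show that, for every superscheme $T$ and every pair $X_1, X_2 \in M(T)$, the functor $\Isom_T(X_1, X_2)$ is representable by a $T$-superscheme. Since $\Isom_T(X_1, X_2)$ is an fppf sheaf and representability by superschemes is local on the base for the fppf (indeed \'etale) topology, it suffices to represent it after passing to the members of a suitable cover of $T$.

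First I would reduce to the situation already handled. By Proposition~\ref{wittenclassifying-m} the fibers of $X_1$ and $X_2$ are all isomorphic to $\wy$, and by Proposition~\ref{otwodeformsuniquely} and Proposition~\ref{zisprojective}.2 each $X_i$ carries, Zariski- (or \'etale-) locally on $T$, a relatively very ample line bundle $\mc{O}_{X_i}(1)$ embedding it into $\pp^{1\,|\,\nr/2+1}_T$. Working over such a cover, I would argue exactly as in Theorem~\ref{aut-rep}: an isomorphism $\phi \colon X_1 \times_T T' \lgr X_2 \times_T T'$ satisfies $\phi^* \mc{O}_{X_2}(1) \cong \mc{O}_{X_1}(1) \otimes \mc{N}$ for a line bundle $\mc{N}$ pulled back from $T'$ (the relative version of Lemma~\ref{Picard}); localizing further on $T'$ to trivialize $\mc{N}$, the map $\phi$ identifies the homogeneous coordinate rings $\bigoplus_m H^0(\mc{O}(m))$ of the two embeddings and hence extends to an isomorphism of the ambient $\pp^{1\,|\,\nr/2+1}_{T'}$. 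Consequently $\Isom_{T'}(X_1, X_2)$ is realized as a subquotient of $\on{Aut}_{T'}(\pp^{1\,|\,\nr/2+1}_{T'})$, an algebraic group superscheme, and is therefore representable.

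Equivalently, one may reduce directly to Theorem~\ref{aut-rep}: choosing the cover so that $X_1|_{T'} \cong g^* Z$ and $X_2|_{T'} \cong h^* Z$ for maps $g, h \colon T' \to S$, one has
\[
\Isom_{T'}(X_1|_{T'}, X_2|_{T'}) \cong (g,h)^*\, \Isom_{S \times S}(\prn^* Z, \prt^* Z),
\]
and the right-hand $\Isom$-sheaf over $S \times S$ is representable by the verbatim argument of Theorem~\ref{aut-rep} applied to the two (generally non-isomorphic) pullbacks $\prn^* Z$ and $\prt^* Z$. Either way, representability is obtained after a cover, and I would finish by descending it back to $T$ using fppf descent for superschemes.

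The hard part will be this last descent step: representability is only produced after a base change $T' \to T$, so one must verify that it descends to $T$ itself. This requires controlling the geometry of the $\Isom$-morphism — reading off from its realization as a subquotient of $\on{Aut}(\pp^{1\,|\,\nr/2+1})$ that it is (quasi-)affine and separated of finite type over the base, so that fppf descent is effective. The subsidiary technical point is the relative Picard control used above: one must know that the fiberwise $\mc{O}(1)$ extends to a line bundle locally on $T$ and that any isomorphism twists it only by a bundle pulled back from the base, which is where Proposition~\ref{otwodeformsuniquely} and the relative form of Lemma~\ref{Picard} enter.
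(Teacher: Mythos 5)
Your proposal is correct and follows the same overall skeleton as the paper: identify $T \times_{M \times M} M$ with the sheaf $\Isom_T(X_1,X_2)$, \'etale-localize on $T$ via Proposition \ref{wittenclassifying-m} (together with Theorem \ref{versal}) to write each $X_i$ as a pullback of $Z$, invoke Theorem \ref{aut-rep}, and conclude by \'etale descent. Where you diverge is the middle step. The paper does \emph{not} re-run the projective-embedding argument for the pair $(X_1,X_2)$; instead it observes that $\Isom_{U_i'}(X_1|_{U_i'}, X_2|_{U_i'})$ is a torsor over $\Aut_{U_i'}(Z \times_{S,f_i} U_i')$, which is representable by Theorem \ref{aut-rep}, and then declares the Isom sheaf to be \'etale-locally isomorphic to that group superscheme. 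Your variant --- realizing $\Isom$ over members of the cover directly as a subquotient of $\Aut(\pp^{1\,|\,\nr/2+1})$, or equivalently as the pullback of $\Isom_{S\times S}(\prn^*Z, \prt^*Z)$ --- is in fact the more robust route: a pseudo-torsor under a representable group sheaf need not be \'etale-locally trivial, and indeed the two pullbacks $Z \times_{S,f_i} U_i'$ and $Z \times_{S,g_i} U_i'$ need not be isomorphic over $U_i'$ (the classifying maps $f_i, g_i \colon U_i' \to S$ can differ), so the Isom sheaf may have no section there and the paper's phrase ``\'etale locally isomorphic to the supergroup scheme'' needs extra care that your direct argument sidesteps. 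Likewise, your closing point about effectiveness of descent --- checking that the locally constructed Isom superschemes are (quasi-)affine over the base so that the descent datum is effective and yields a superscheme rather than merely an algebraic superspace --- makes explicit something the paper compresses into the words ``by \'etale descent.'' So the two proofs buy the same conclusion, but yours fills in two points at which the paper's write-up is thinner.
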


\begin{proof}
What we need to prove is that, for every superscheme $T$, the sheaf $\, \Isom_T(X_1, X_2)$ on $\sS/T$ is representable by a $T$-su\-per\-scheme.

It follows from Proposition \ref{wittenclassifying-m} that there exists an \'etale cover $\{U_i\}_{i \in I}$ of $\sbs$ on which 

$X_2$ trivializes:
$X_2 \times_T U_i \cong \wy \times U_i$. This \'etale cover extends uniquely to an \'etale cover $\{U'_i \to T\}$. We showed in Theorem \ref{versal} that for a  choice of $U_i$-isomorphism $X_2 \times_T U_i \lgr \wy \times U_i$, there exists a map $f_i: U_i' \to S$ such that $X_2 \times_T U_i' \cong Z \times_{S,f_i} U_i'$ over $U_i'$. Recall that the map $f_i$ comes from treating $X_2 \times_T U_i' $ as a deformation of $\wy \times U_i$ over $U_i'$.  

The set of $U_i'$-isomorphisms $X_1 \times_T U_i' \lgr X_2 \times_T U_i'$ is naturally a torsor over the group of $U_i'$-automorphisms of $X_2 \times_T U_i'$. (It is also a torsor over the group of $U_i'$-automorphisms of $X_1 \times_T U_i'$.)

Since $X_2 \times_T U_i' \cong Z \times_{S,f_i} U_i'$, it follows that $\Isom_{U_i'}(X_1 \times_T U_i', X_2 \times_T U_i')$ is a torsor over $\on{Aut}_{U_i'}(Z \times_{S,f_i} U_i')$. We know the latter functor is representable (because $\on{Aut}_S(Z)$ is representable by Theorem \ref{aut-rep}). 
This shows $\Isom_T(X_1, X_2)$ to be \'etale locally isomorphic to the supergroup scheme $\on{Aut}_{U_i'}(Z \times_{S,f_i} U_i')$ and, therefore, a superscheme by \'etale descent.
\end{proof}

\subsubsection*{A smooth cover of $M(1-\nr/2)$}

Since the diagonal map $\Delta: M \to M \times M $ is representable, any morphism $T \to M$ is representable by superschemes. In particular, the formally smooth morphism $S \to M$ is representable. Since $S$ is finite-type (in fact, finite), the morphism $S \to M$ is smooth.

\begin{proposition}
The map $S \to M$ identified with $Z \in M(S)$ is a smooth cover. 
\end{proposition}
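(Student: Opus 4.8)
The plan is to separate the two conditions packaged in the phrase \emph{smooth cover}: smoothness and surjectivity. Smoothness has already been established in the discussion immediately preceding this proposition---the morphism $S \to M$ is formally smooth by Theorem \ref{formalsmooth}, it is representable by superschemes because the diagonal of $M$ is representable (Corollary \ref{diagonalrep}), and it is locally of finite type since $S = \Spec B$ with $B \cong k[\eta_1, \dots, \eta_{\nr/2-2}]$ a finite-dimensional $k$-superalgebra, so that $S$ is in fact finite. Together these give smoothness, and what remains is to prove that $S \to M$ is surjective, i.e.\ a cover.

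Because $S \to M$ is representable, surjectivity can be checked after base change: I would show that for every superscheme $T$ and every object $X \in M(T)$ (equivalently, every morphism $T \to M$) the projection $p \colon S \times_M T \to T$ is surjective. By Corollary \ref{diagonalrep} and the remarks following it, $S \times_M T$ is a superscheme, smooth over $T$ as the base change of the smooth morphism $S \to M$. A $T'$-point of $S \times_M T$ over $T' \to T$ is a pair $(g, \phi)$ consisting of a map $g \colon T' \to S$ and an isomorphism $\phi \colon Z \times_{S,g} T' \lgr X \times_T T'$ in $M(T')$. To prove $p$ surjective it therefore suffices to produce such pairs over the members of an étale cover of $T$, since the base points of the resulting sections of $p$ then cover $T$.

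The existence of these local sections is exactly the content of the argument already run in the proofs of Theorem \ref{versal} and Corollary \ref{diagonalrep}, so I would simply invoke it. By Proposition \ref{wittenclassifying-m} there is an étale cover $\{U_i \to \sbs\}$ of the bosonic reduction on which the underlying family of $X$ trivializes; this extends uniquely to an étale cover $\{U_i' \to T\}$ by the topological invariance of the étale site. Over each $U_i'$ one treats $X \times_T U_i'$ as a deformation of $\wy \times U_i$, reads off its \v{C}ech gluing cocycle as in Theorem \ref{versal}, and obtains a map $f_i \colon U_i' \to S$ together with an isomorphism $\phi_i \colon Z \times_{S,f_i} U_i' \lgr X \times_T U_i'$. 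The pair $(f_i, \phi_i)$ is a section of $p$ over $U_i'$, so $p$ admits sections étale-locally on $T$ and is therefore surjective; combined with smoothness this gives the covering property.

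The main obstacle, as in the earlier proofs, is the presence of odd (nilpotent) directions in $T$: one cannot trivialize $X$ over all of $T$ at once, only over its bosonic locus, and must then propagate the trivialization up the odd-nilpotent tower using the versality of $Z/S$. This is handled precisely by the inductive square-zero argument of Theorem \ref{versal}, which guarantees at each stage that the gluing cocycle is of the standard form prescribed by the basis of $H^1(\wy, \mc{T}_{\wy})$ computed in Lemma \ref{infdef}; once the sections $f_i$ are produced in this way, surjectivity---and hence the assertion that $S \to M$ is a smooth cover---follows formally.
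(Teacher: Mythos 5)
Your proof is correct, and it rests on the same two pillars as the paper's: smoothness is inherited from Theorem \ref{formalsmooth} together with representability (Corollary \ref{diagonalrep}) and finiteness of $S$, and surjectivity comes down to Proposition \ref{wittenclassifying-m} combined with the versality/Isom identification from Theorem \ref{versal} and Corollary \ref{diagonalrep}. The difference lies in how surjectivity is verified. The paper checks it only on closed points: since all objects are locally of finite type over the algebraically closed field $k$, it suffices that $(S \times_M T)(\on{Spec} k) \to T(\on{Spec} k)$ be surjective, and this reduces the whole question to the bosonic level, where $S_{\bos} \times_{M_{\bos}} \sbs$ is \'etale locally isomorphic to the group superscheme $\awy_{\bos} \times \sbs$, which obviously surjects onto $\sbs$ --- so nilpotent directions never enter the argument. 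You instead construct \'etale-local sections of $S \times_M T \to T$ over the full super base $T$, which forces you to propagate the trivialization from $\sbs$ up the odd-nilpotent tower via the deformation-theoretic induction. Your route yields a slightly stronger conclusion (the base-changed morphism splits \'etale locally over arbitrary super $T$), but at the cost of re-running the nilpotent induction; the paper's reduction to closed points avoids that work entirely, since surjectivity is a purely topological condition and the underlying topological space of a superscheme coincides with that of its bosonic reduction.
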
 

\begin{proof} We have already showed that $S \to M$ is smooth. The morphism  $S \to M$ is surjective  if for every $T \to M$, the map $S \times_M T \to T$ is surjective. It is enough to check surjectivity on closed points, \emph{i.e}., that
\[
(S \times_M T)(\on{Spec} k) \to T(\on{Spec} k)
\]
is surjective, or equivalently, that
\begin{equation}
\label{surjectie}
(S_{\bos} \times_{M_{\bos}} \sbs)(\on{Spec} k) \to \sbs(\on{Spec} k)
\end{equation}
is surjective.  

Note that $S_{\bos} \times_{M_{\bos}} \sbs$ is \'etale locally isomorphic to the group superscheme $\awy_{\bos} \times \sbs$ by Proposition \ref{wittenclassifying-m} and identification of $\Isom$ as in the proof of Corollary \ref{diagonalrep}. Since the $\sbs$-morphism $\awy_{\bos} \times \sbs \to \sbs$ is obviously surjective, \eqref{surjectie} is surjective as well. 
\end{proof} 

\subsubsection*{Action of $\awy$ on $S$ and $Z$}

The group superscheme $\awy$ acts naturally on $S$, which is the affine superspace $\mathbb{H}^1(\wy, \mc{T}_{\wy})$ associated with the super vector space $H^1(\wy, \mc{T}_{\wy})$. More explicitly, if  $g$ is a $T$-point of $\awy$ and $\eta$ is a global vertical vector field on $(U \cap V) \times T \to T$
representing a cohomology class $[\eta] \in H^1(\wy, \mc{T}_{\wy}) \otimes \mc{O}_T^+$, then $[\eta'] := g \cdot [\eta]$ is the class represented by the
vertical vector field $g_* (\eta) \circ g^{-1}$
on $(U' \cap V') \times T \to T$, where $U' = gU$, $V' = gV$, and $g_*: \mc{T}_{(U \cap V) \times T/T} \to g^* \mc{T}_{(U' \cap V') \times T/T}$ is the pushforward (differential) map.

This action lifts to the universal deformation $Z$ of $\wy$. Since we defined $Z$ by gluing it from two charts, let us describe this action in charts. Suppose that $g$ is a $T$-point of $\awy$. Then $g$ maps the first chart $U$ on $\wy$ with coordinates $(z \, | \,\zeta)$ to a chart $U' = gU$ with coordinates $(z' \, | \, \zeta') = g \cdot (z \, | \,\zeta)$. Similarly, $g$ maps the second chart $V$ on $\wy$ with coordinates $(w \, | \,\chi)$ to a chart $V' = gV$ with coordinates $(w' \, | \, \chi') = g \cdot (w \, | \,\chi)$. Remember that $Z$ was obtained by gluing the charts $U$ and $V$ via an isomorphism $(w,\chi) = (w(z), \chi(z,\zeta, \eta))$ given by \eqref{gluingformula1}. We define the action of $g$ on $U \subset Z$ as $g \cdot (z \, | \,\zeta) = (z',\zeta')$.
Then we define $g$ to act on $V \subset Z$ as $g \cdot (w \, | \,\chi) = (w(z'), \chi (z', \zeta', \eta'))$, where $\eta' = g \cdot \eta$ is the action of $g$ on a $T$-point $\eta$ of $S = \mathbb{H}^1(\wy, \mc{T}_{\wy})$ described in the previous paragraph.

\subsubsection*{$M(1-\nr/2)$ as a quotient superstack $[S/\awy]$}

In this section we will prove that $M(1-\nr/2) = [S/\awy]$. The proof will require the following technical lemma, 

\begin{lemma} \label{technicallemma} Consider the diagram of algebraic superstacks, 
\begin{equation} \label{diagram}
\begin{tikzcd}
D \arrow[r] \arrow[d] & E \arrow[r] \arrow[d] & F \arrow[d] \\
A \arrow[r]           & B \arrow[r]           & C   
\end{tikzcd}
\end{equation}
where $A \to B$ and $F \to C$ are smooth covers. Suppose $D \cong A \times_C F$. Then  $D \cong A \times_B E$ if and only if $E \cong B \times_C F$. 
\end{lemma}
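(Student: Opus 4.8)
The statement is the pasting (composition) law for $2$-cartesian squares, with the nontrivial direction supplied by faithfully flat descent. Throughout, I would read ``$D \cong A \times_C F$'' as the assertion that the outer rectangle of \eqref{diagram} is $2$-cartesian via its structure maps, and similarly ``$D \cong A \times_B E$'' and ``$E \cong B \times_C F$'' as the assertions that the left square and the right square are $2$-cartesian. Write $p \colon E \to B$ and $q \colon E \to F$ for the two maps out of $E$ in \eqref{diagram}, and $\on{pr}$ for the various fiber-product projections. The plan is to prove the two implications separately, using only the universal property of fiber products for the forward one and descent along the cover $A \to B$ for the reverse one.

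For the forward direction, suppose $E \cong B \times_C F$. Then I would simply compute, using the canonical associativity equivalence of fiber products together with the hypothesis that the outer rectangle is cartesian,
\[
A \times_B E \;\cong\; A \times_B (B \times_C F) \;\cong\; A \times_C F \;\cong\; D,
\]
and check that this chain is compatible with the structure maps $D \to A$ and $D \to E$, so that the left square is cartesian. This half is pure category theory and uses neither cover.

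For the reverse direction, assume the left square and the outer rectangle are both cartesian. Since the right square commutes, $p$ and $q$ agree over $C$, so they induce a canonical comparison morphism $c \colon E \to B \times_C F$ with $\on{pr}_B \circ c = p$ and $\on{pr}_F \circ c = q$; the goal is to show $c$ is an equivalence. I would prove this after base change along $A \to B$. On one side $A \times_B (B \times_C F) \cong A \times_C F$ canonically; on the other side the two hypotheses furnish a composite equivalence $A \times_B E \cong D \cong A \times_C F$. The crucial observation is that, viewed as morphisms $A \times_B E \to A \times_C F$, both $c_A := \on{id}_A \times_B c$ and this composite equivalence realize one and the same cone, namely $(\on{pr}_A,\, q \circ \on{pr}_E)$: for $c_A$ this is immediate from $\on{pr}_B \circ c = p$ and $\on{pr}_F \circ c = q$, and for the composite it follows because the left-square equivalence identifies $D \to E$ with $\on{pr}_E$ and the outer-rectangle equivalence identifies $D \to F$ with $q \circ (D \to E)$. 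By the uniqueness clause in the universal property of $A \times_C F$, the two morphisms coincide, whence $c_A$ is an equivalence.

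Finally, a smooth cover is an fppf cover, and being an equivalence is a property of a morphism of algebraic superstacks that descends along fppf covers; since $c$ is a morphism over $B$ whose base change $c_A$ along the smooth cover $A \to B$ is an equivalence, $c$ itself is an equivalence, i.e.\ $E \cong B \times_C F$. I expect the main obstacle to be precisely the cone-matching step: one must verify carefully that the base change of the abstract comparison morphism $c$ agrees with the concrete composite of the hypothesized equivalences, since it is this identification that lets descent do the rest. (Note that $F \to C$ being a smooth cover is not needed for this argument; it is present for symmetry and to ensure all fiber products in \eqref{diagram} remain algebraic superstacks to which fppf descent applies.)
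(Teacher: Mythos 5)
Your proposal is correct and follows essentially the same route as the paper: the forward direction by associativity of fiber products, and the converse by base-changing the comparison morphism $\phi\colon E \to B \times_C F$ along the smooth cover $A \to B$, checking the base change is an equivalence, and then descending (the paper invokes ``stability of isomorphisms under smooth morphisms'', exactly your fppf-descent step). The only cosmetic difference is how the base-changed map is shown to be an equivalence --- the paper recycles its Part (1) applied to a pasted $2\times 1$ diagram to get $D \cong D \times_{B\times_C F} E$, while you verify it directly by cone-matching via the universal property --- and your closing observation that the covering property of $F \to C$ is never used also matches the paper's proof.
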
 

\begin{remark}
The lemma and its proof are valid in any category with fiber products.
\end{remark}

\begin{proof}
(1) If $E \cong B \times_C F$, then $A \times_C F = A \times_B B \times_C F \cong A \times_B E$ and therefore $D \cong A \times_C F$.

(2) For the converse statement,
we will show that the morphism $\phi:E \to B \times_C F$ in the commutative diagram 
\begin{equation}
\label{big-four}
\begin{tikzcd}
D \arrow[d, equal] \arrow[r] \arrow[r] \arrow[r] & E \arrow[r] \arrow[d, "\phi"]    & F \arrow[d, equal] \\
D \arrow[r] \arrow[d]                                          & B \times_C F \arrow[r] \arrow[d] & F \arrow[d]                      \\
A \arrow[r]                                                    & B \arrow[r]                      & C                           
\end{tikzcd}
\end{equation}
is an isomorphism.

It follows from the assumption $D \cong A \times_C F$ that $D \cong A \times_B (B \times_C F)$. Thus, given that $D \cong A \times_B E$, we have that $D \cong D \times_{B \times_C F} E$ by Part 1 applied to the left $2 \times 1$ rectangle in \eqref{big-four}. Since $A \to B$ is a smooth cover and so is $D \to B \times_C F$, whereas $D \xrightarrow{\id} D$ is an isomorphism, $\phi$ must be an isomorphism by the stability of isomorphisms under smooth morphisms.
\end{proof}

\begin{theorem} \label{artin} $M=[S/\awy]$. 
\end{theorem}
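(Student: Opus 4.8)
The plan is to construct a canonical morphism $\Phi \colon [S/\awy] \to M$ and then prove it is an equivalence of superstacks. First I would check that the smooth cover $S \to M$, which classifies the universal family $Z \in M(S)$, is $\awy$-invariant. This is exactly the content of the lifted action described in the section ``Action of $\awy$ on $S$ and $Z$'': a $T$-point $g$ of $\awy$ together with a $T$-point $s$ of $S$ produces an isomorphism $\phi_g \colon Z_s \lgr Z_{g\cdot s}$ over $T$, where $Z_s := Z \times_{S,s} T$. Thus the two composites $\awy \times S \rightrightarrows S \to M$ (projection and action, each followed by the classifying map of $Z$) are $2$-isomorphic, and so the classifying morphism of $Z$ descends along the quotient to a morphism $\Phi \colon [S/\awy] \to M$.

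Next I would reduce ``$\Phi$ is an equivalence'' to a concrete comparison of fibered products. The canonical atlas $S \to [S/\awy]$ and the smooth cover $S \to M$ (shown earlier to be a smooth cover) are both smooth covers, and $\Phi$ is compatible with them, the square with top edge $\mathrm{id}_S$, left edge $S \to [S/\awy]$, right edge $S \to M$, and bottom edge $\Phi$ being $2$-commutative. Since $S \to M$ is surjective and factors through $\Phi$, the morphism $\Phi$ is essentially surjective; so it remains to show $\Phi$ is fully faithful, equivalently that the induced map on fibered products
\[
\awy \times S \;=\; S \times_{[S/\awy]} S \;\lr\; S \times_M S
\]
is an isomorphism of superschemes over $S \times S$. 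This is precisely the bookkeeping that Lemma \ref{technicallemma} is designed to handle: with the two smooth covers $S \to [S/\awy]$ and $S \to M$ in hand, the lemma converts the scheme-level isomorphism of fibered products into the desired equivalence $[S/\awy] \simeq M$, and the representability of $S \times_M S$ needed to even speak of such an isomorphism is guaranteed by Corollary \ref{diagonalrep}.

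The heart of the argument is therefore the isomorphism $\awy \times S \lgr S \times_M S$, $(g,s) \mapsto (s,\, g\cdot s,\, \phi_g)$, where $S \times_M S$ is the $\Isom$-superscheme parameterizing triples $(s_1,s_2,\sigma)$ with $\sigma \colon Z_{s_1} \lgr Z_{s_2}$ an isomorphism over $T$. I would establish bijectivity by showing that every such $\sigma$ is realized by a unique $g \in \awy(T)$ with $g\cdot s_1 = s_2$ and $\sigma = \phi_g$. As in the proofs of Theorems \ref{rep} and \ref{aut-rep}, such a $\sigma$ preserves the relatively very ample bundle $\mc{O}_Z(1)$ up to tensoring by a line bundle pulled back from $T$, which trivializes Zariski-locally on $T$; after passing to such a cover, $\sigma$ induces an isomorphism of the $\Z$-graded homogeneous coordinate superalgebras of $Z_{s_1}$ and $Z_{s_2}$, and hence identifies with a $T$-point of the automorphism supergroup of the graded superalgebra $A$, i.e.\ an element of $\awy = \Aut(A)/\Gamma^*$. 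Comparing the gluing functions \eqref{gluingformula1} of $Z_{s_1}$ and $Z_{s_2}$ under this automorphism shows that the odd deformation parameters transform exactly by the $\awy$-action on $S = \mathbb{H}^1(\wy,\mc{T}_{\wy})$, forcing $s_2 = g\cdot s_1$ and $\sigma = \phi_g$, which yields both surjectivity and uniqueness.

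I expect the main obstacle to be this last rigidity statement: controlling \emph{all} isomorphisms between fibers of the universal family and matching the resulting transformation of the coordinates $\eta_i$ on $S$ with the prescribed $\awy$-action. The subtlety is that an isomorphism $Z_{s_1} \to Z_{s_2}$ with $s_1 \ne s_2$ is \emph{not} an $S$-automorphism, so one cannot simply invoke $\Aut_S(Z)$; one must genuinely combine the action of $\awy$ on the base $S$ with the graded-coordinate-ring analysis underlying Theorems \ref{rep} and \ref{aut-rep}. A secondary, purely formal obstacle is the careful application of Lemma \ref{technicallemma} to promote the scheme-level isomorphism $\awy \times S \cong S \times_M S$ to the equivalence of superstacks $[S/\awy] \simeq M$.
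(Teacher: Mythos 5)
Your construction of the comparison morphism $[S/\awy] \to M$ from the lifted $\awy$-action on $Z$, and your reduction of the theorem, via Lemma \ref{technicallemma} and Corollary \ref{diagonalrep}, to showing that the natural map between $\awy \times S \cong S \times_{[S/\awy]} S$ and $S \times_M S$ is an isomorphism of superschemes, coincide with the paper's first two steps. The gap lies in your proof of that isomorphism, i.e., in the rigidity claim that every $T$-isomorphism $\sigma: Z_{s_1} \lgr Z_{s_2}$ equals $\phi_g$ for a unique $g \in \awy(T)$ with $s_2 = g \cdot s_1$. You argue that, Zariski-locally on $T$, $\sigma$ induces an isomorphism of homogeneous coordinate superalgebras and ``hence identifies with a $T$-point of $\Aut(A)$.'' This inference fails. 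The argument of Theorem \ref{rep} depends on the fact that the coordinate ring of $\wy \times T$ is literally $A \otimes R$; but for $s \neq 0$ the graded ring $\bigoplus_m H^0(Z_s, \mc{O}_{Z_s}(m))$ is a \emph{nontrivial} deformation of it and is not isomorphic to $A \otimes R$ as a graded $R$-superalgebra --- if it were, one would conclude $Z_s \cong \wy \times T$, contradicting the universality of $Z$ over the Artinian base $S$ (Theorem \ref{versal}). Concretely, for $\nr \ge 6$ a chart computation with the gluing \eqref{gluingformula1} shows that $H^0(Z, \mc{O}_Z(1-\nr/2))$ is the one-dimensional space $k \cdot \eta_1 \cdots \eta_{\nr/2-2}\,\zeta$, killed by the maximal ideal of $B$, whereas the degree-$(1-\nr/2)$ component of $A \otimes B$ is the free module $B\theta$ of rank $(0|1)$. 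So an isomorphism of the coordinate rings of $Z_{s_1}$ and $Z_{s_2}$ is an isomorphism between two deformed algebras, not an automorphism of $A \otimes R$, and the extraction of $g \in \awy(T)$ from $\sigma$ --- the entire content of the rigidity statement, which you yourself flag as the main obstacle --- is left unproved. (The graded ring that does control $Z$, as in Theorem \ref{aut-rep}, is the one giving the embedding $Z \hookrightarrow \pp^{1\,|\,\nr/2+1}_S$, whose automorphism group is not $\Aut(A)$, and relating it back to $\awy$ and to the action on $S$ is genuinely additional work.)

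The idea missing from your proposal is the device by which the paper sidesteps exactly this obstacle: the two fiber products are compared as superschemes over $S$ via the (flat) second projections, and since $S = \Spec B$ is Artinian local, Nakayama's lemma reduces the isomorphism claim to its pullback along the closed point $\Spec k \to S$. After this reduction one compares $S \times_{[S/\awy]} \Spec k \cong \awy$ with $S \times_M \Spec k$, whose $T$-points are pairs consisting of $f: T \to S$ and an isomorphism $Z \times_{S,f} T \lgr \wy \times T$; thus one side of every comparison is the \emph{undeformed} $\wy \times T$, never a pair of deformed curves. Both sides are $\awy$-torsors for the action through the first factor, the comparison map is $\awy$-equivariant, and an equivariant map of torsors is automatically an isomorphism. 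If you want to keep your direct approach over an arbitrary base, you need a correct substitute for the $\Aut(A)$ step --- for instance, restrict $\sigma$ to bosonic reductions to get an element of $\awy(T_{\bos})$, lift it to $\awy(T)$ locally using smoothness of $\awy$, and then control the remaining isomorphism (which is a deformation isomorphism) by the universal property of $Z$ --- but as written the proposal does not supply this.
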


\begin{proof} 
Let us first define a morphism
\[
F: [S/\awy] \to M
\]
of superstacks. The idea is that the $\awy$-equivariant morphism $Z \to S$ from the previous section (\textbf{Action of $\awy$ on $S$}) induces a superstack morphism $[Z/\awy] \to [S/\awy]$, which is a family of supercurves over the quotient superstack $[S/\awy]$. This yields a canonical morphism of superstacks $F: [S/\awy] \to M$. To avoid talking about families of supercurves over superstacks, let us give a direct argument using the functor of points of a superstack, \emph{i.e}., using the definition of a superstack as a category fibered in groupoids over the category of superschemes.

The algebraic superstack $[S/\awy]$ is the stackification of the \emph{action groupoid} $\{ S/\awy \}$. This is the category fibered in groupoids assigning to each superscheme $T$ the groupoid
whose objects are $T$-points $f: T \to S$ of $S$ and morphisms $f \to f'$ are given by elements $g \in \awy(T)$ such that $f' =  g \cdot f$,
where $g \cdot f$ denotes the action of $\awy(T)$ on $S(T)$ described in the previous section. Each object $f \in S(T)$ 
gives the supercurve $Z \times_{S,f} T$, and each morphism $g: f \to f' = g \cdot f$ lifts to a $T$-isomorphism $Z \times_{S,f} T \xrightarrow{g}  Z \times_{S,f'} T$ by the lifting of the action of $\awy$ from $S$ to $Z$ in the previous section. This defines a natural morphism of algebraic superstacks
\begin{align*}
F(T): [S/\awy](T) &\to M(T), \\ 
f &\mapsto Z \times_{S,f} T, \\ 
(g: f \to f') &\mapsto \left( Z \times_{S,f} T \xrightarrow{g} Z \times_{S, f'} T \right).
\end{align*} 

To prove that $F$ is an isomorphism of algebraic superstacks, we will consider the diagram 
\begin{equation} \label{good}
\begin{tikzcd}
S \times_M S \arrow[r] \arrow[d] & S \arrow[r, equal] \arrow[d] & S \arrow[d] \\
S \arrow[r]                      & {[S/\awy]} \arrow[r, "F"]                       & M          
\end{tikzcd}
\end{equation}
and show that $S \cong S \times_M [S/\awy]$, from which it follows (by smooth base change) that $F: [S/\awy] \to M$ is an isomorphism. From Lemma \ref{technicallemma}, we know that $S \cong S \times_M [S/\awy]$ if and only if $S \times_M S$ is isomorphic to $S \times_{[S/\awy]} S$.

The diagram \eqref{good} induces a morphism
\[\phi: S \times_M S \to S \times_{[S/\awy]} S  \] 
of superstacks, which are actually represented by a superschemes by Corollary \ref{diagonalrep} and the fact that quotient stacks are algebraic. We may treat $\phi$ as a morphism of $S$-superschemes by observing that it respects the smooth second projections $p_2: S \times_M S \to S$ and $p_2': S \times_{[S/\awy]} S \to S$, making the following diagram commute:
\begin{equation*}
\begin{tikzcd}
S \times_M S \arrow[rr, "\phi"] \arrow[rd, "p_2"'] &   & S \times_{[S/\awy]} S. \arrow[ld, "p_2'"] \\                                                   & S &                                
\end{tikzcd}
\end{equation*} 

Since $p_1$ and $p'_1$ are flat morphisms to the Artinian base $S = \Spec B$, by Nakayama's lemma it is enough to check that the pullback of $\phi$ via $\Spec k \to S$ is an isomorphism.
The pullback of $\phi$ to $\on{Spec} k$  is a morphism
\[
S \times_M \Spec k \xrightarrow{\phi(k)} S \times_{[S/\awy]} \Spec k .
\]

The product $S \times_M \on{Spec} k$ is the functor sending a superscheme $T$ over $k$ to the set  $\{ f: T \to S , \; \phi: Z \times_{S,f} T \lgr \wy \times T\} $. 
This set is a torsor over $\awy(T)$ and, therefore, $S \times_M \on{Spec} k$ is an $\awy$-torsor.

On the other hand, $S \times_{[S/\awy]} \Spec k \cong \awy$ is also an $\awy$-torsor and the morphism $\phi(k)$ is an $\awy$-equivariant morphism of $\awy$-torsors, because the action of $\awy$ on both sides comes from the action on the first factor $S$ and $\phi(k)$ is induced by $\id: S \to S$.
Thus, $\phi(k)$ and therefore $\phi$ are isomorphisms.
\end{proof}

\subsection{Construction of the moduli superstack $\mnr$ of SUSY curves}
\label{susystructure} 

There is a natural map $\mnr \to M(1-\nr/2)$ forgetting the SUSY
structure. The fiber product $\mc{Y}$ in the diagram
    \begin{equation} \label{main}
\begin{tikzcd}
\mc{Y} \arrow[r] \arrow[d] & S \arrow[d] \\
                    \mnr \arrow[r]                    & M(1-\nr/2)                     
\end{tikzcd}
\end{equation} 
is a functor parameterizing SUSY structures on the universal
supercurve over $S$. More precisely, $\mc{Y}$ is the functor taking $T
\to S$ to the set of $\nr$-punctured SUSY structures on $Z \times_{S}
T$.  Our first goal in this section is to prove that $\mc{Y}$ is
represented by a superscheme. To do this, we will need to get a good
handle on those line bundles on $Z$ which encode an $\nr$-punctured
SUSY structure on $Z$. Henceforth, we will just refer to an
$\nr$-punctured SUSY structure as a SUSY structure.

\subsubsection*{Framed pre-SUSY structures}

Suppose that $\mc{L}$ is a line bundle on $Z$ realizing a SUSY
structure on $Z$: $\mc{L}$ is the subbundle of $\Omega^1_{Z/S}$ of
1-forms vanishing on the structure distribution $\mc{D}$, see
Definition \ref{SRSdef}. Then for the restriction of $\mc{L}$ to $\wy$,
we must have $\mc{L} \vert_{\wy} \cong \mc{O}_{\wy}(-2)$, see
Corollary \ref{-2}.  In particular, $\mc{L}$ may be viewed as a
deformation of $\mc{O}_{\wy}(-2)$, up to isomorphism.

From the gluing description of $Z$, we see that the standard gluing
description of the line bundle $\mc{O}(n)$ (transition function $z^n$
over the two standard overlapping charts) can still be used to define
a line bundle over $Z$, which we denote $\mc{O}_Z(n)$.

Since $\mc{O}_{\wy}(-2)$ has a unique, up to isomorphism, deformation
to $Z$ (Lemma \ref{otwodeformsuniquely}),
it follows that $\mc{L}$ must be isomorphic $\mc{O}_Z(-2)$.  It is,
therefore, a necessary condition
that for $\mc{L}$ to realize a SUSY structure on $Z$, it must be
isomorphic to $ \mc{O}_{Z}(-2)$.

From this necessary condition on $\mc{L}$, we are lead to the
following notion of a \emph{framed pre-SUSY structure} on $Z$.

\begin{definition}[Framed Pre-SUSY Structure]
A \emph{framed pre-SUSY structure} on $Z$ is a locally free, rank
$(0|1)$ quotient $q: \Omega^1_{Z/S} \twoheadrightarrow \mc{G}$
together with an isomorphism $\mc{L} := \on{ker}(q) \lgr \mc{O}_Z(-2)$
of line bundles on $Z$.
\end{definition}

Henceforth, let
\[
n := \nr
\]
denote the number of Ramond punctures, which is assumed to be even and
at least $4$.  We would like to start with computing the space
$\Hom (\mc{O}_Z(-2), \Omega_{Z/S}^1) =  H^0(Z, \Omega_{Z/S}^1(2))$ of
global sections of the twisted cotangent bundle $\Omega_{Z/S}^1(2)$.

\begin{proposition}
\label{7-18}
For the weighted projective superspace $\wy$, we have $\dim H^0(\wy, \Omega_{\wy}^1(2)) = n+2 \, | \, n+2$ and $H^1(\wy, \Omega_{\wy}^1(2)) = 0$.
\end{proposition}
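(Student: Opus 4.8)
The plan is to combine the (super) Euler sequence of $\wy$ with its splitting over $\pr$, and then read off both cohomology groups from a single long exact sequence.

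\emph{Step 1 (cohomology of the twisting line bundles).} Since $\wy$ is split with $\mc{O}_{\wy}\cong\mc{O}_{\pr}\oplus\Pi\mc{O}_{\pr}(n/2-1)$ by Example \ref{wsp}, pushing the line bundle $\mc{O}_{\wy}(d)$ forward to $\pr$ and comparing the two standard charts (as in \eqref{gluingfunctionforwp}) gives $\mc{O}_{\pr}(d)\oplus\Pi\mc{O}_{\pr}(d+n/2-1)$, so that
\[
H^i(\wy,\mc{O}_{\wy}(d)) = H^i(\pr,\mc{O}_{\pr}(d))\oplus \Pi\, H^i(\pr,\mc{O}_{\pr}(d+n/2-1)).
\]
In particular $H^0(\wy,\mc{O}_{\wy}(d))$ has superdimension $\bigl(\dim_k H^0(\mc{O}_{\pr}(d))\mid \dim_k H^0(\mc{O}_{\pr}(d+n/2-1))\bigr)$, and $H^1$ vanishes whenever both $d\ge -1$ and $d+n/2-1\ge -1$.

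\emph{Step 2 (the twisted Euler sequence).} The quotient presentation $\wy=(\mathbb{A}^{2|1}\setminus\{0\})/\gm$, with the free $\gm$-action on the even coordinates $u,v$ of weight $1$ and on the odd coordinate $\theta$ of weight $1-n/2$, yields the Euler sequence
\[
0\lr\Omega_{\wy}^1\lr \mc{O}_{\wy}(-1)^{\oplus 2}\oplus\Pi\mc{O}_{\wy}(n/2-1)\xrightarrow{(u,v,\theta)}\mc{O}_{\wy}\lr 0.
\]
Twisting by $\mc{O}_{\wy}(2)$ gives
\[
0\lr\Omega_{\wy}^1(2)\lr \mc{O}_{\wy}(1)^{\oplus 2}\oplus\Pi\mc{O}_{\wy}(n/2+1)\xrightarrow{(u,v,\theta)}\mc{O}_{\wy}(2)\lr 0.
\]
By Step 1 every $\pr$-line bundle occurring in the middle and right terms has degree $\ge -1$, so their $H^1$ vanish and the long exact sequence collapses to
\[
0\lr H^0(\Omega_{\wy}^1(2))\lr H^0(\text{middle})\xrightarrow{\alpha} H^0(\wy,\mc{O}_{\wy}(2))\lr H^1(\Omega_{\wy}^1(2))\lr 0.
\]
Step 1 computes the superdimensions $\dim H^0(\text{middle})=(n+5\mid 3n/2+4)$ and $\dim H^0(\wy,\mc{O}_{\wy}(2))=(3\mid n/2+2)$.

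\emph{Step 3 (surjectivity and the count).} It remains to show $\alpha$ is surjective, which simultaneously forces $H^1(\Omega_{\wy}^1(2))=0$ and pins down $H^0$. The image of $\alpha$ contains $u\cdot H^0(\mc{O}_{\wy}(1))+v\cdot H^0(\mc{O}_{\wy}(1))+\theta\cdot H^0(\mc{O}_{\wy}(n/2+1))$; writing out monomials, the even part already contains $u^2,uv,v^2$, while $\theta$ times the even sections $u^{n/2+1-i}v^i$ ($0\le i\le n/2+1$) exhausts the odd part of $H^0(\wy,\mc{O}_{\wy}(2))$. Hence $\alpha$ is onto, $H^1(\Omega_{\wy}^1(2))=0$, and $\dim H^0(\Omega_{\wy}^1(2))=(n+5\mid 3n/2+4)-(3\mid n/2+2)=(n+2\mid n+2)$, as claimed. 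The main obstacle is Step 2: one must justify that the super Euler sequence is exact for this weighted projective superspace and keep the parity of the odd summand $\Pi\mc{O}_{\wy}(n/2-1)$ straight. This is exactly where the hypothesis that the even weights equal $1$ (so $X_{\bos}=\pr$ is smooth and the $\gm$-action is free on the even coordinates) is used, the odd weight $1-n/2$ only shifting the twist. If one prefers to avoid the Euler sequence, the same two groups can be obtained by a direct \v{C}ech computation on the cover $\{U,V\}$ exactly as in Lemma \ref{infdef}, expressing a section of $\Omega_{\wy}^1(2)$ in the frames $dz,d\zeta$ and matching it with $dw,d\chi$ under \eqref{gluingfunctionforwp}; the surjectivity check of Step 3 is then replaced by solving the resulting linear system, at the cost of heavier bookkeeping.
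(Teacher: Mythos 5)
Your proof is correct and follows essentially the same route as the paper: both twist the super Euler sequence by $\mc{O}(2)$, collapse the resulting long exact sequence using the vanishing of $H^1$ of the middle and right-hand terms, and finish by checking surjectivity of the induced map on global sections followed by a superdimension count. The only cosmetic difference is that you derive the line-bundle cohomology (and the needed $H^1$ vanishing) from the splitting $\mc{O}_{\wy}(d) \cong \mc{O}_{\pr}(d) \oplus \Pi \mc{O}_{\pr}(d+n/2-1)$ over $\pr$, whereas the paper obtains the same vanishing via super Serre duality.
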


\begin{proof}
Relating the sheaf of K\"ahler differentials on $\wy$ with that on the corresponding affine superspace $\mathbb{A}_k^{2|1} = \Spec k[u,v,\theta]$, where $k[u,v,\theta]$ is the homogeneous coordinate ring of $\wy$, we get a short exact sequence
\begin{equation}
    \label{SES}
0 \to \Omega_{\wy}^1 \to \mc{O}(-1) du \oplus \mc{O}(-1) dv \oplus \mc{O}(n/2 -1) d\theta \xrightarrow{p} \mc{O} \to 0,
\end{equation}
where $du$, $dv$, and $d\theta$ are mere placeholders for the components of the direct sum, also carrying certain parity: $du$ and $dv$ is even and $d\theta$ is odd. The morphism $p$ is given by
\begin{align}
\nonumber
du & \mapsto u \in H^0(\wy, \mc{O}(1)),\\
\label{p}
dv & \mapsto v \in H^0(\wy, \mc{O}(1)),\\
\nonumber
d\theta & \mapsto \theta \in H^0(\wy, \mc{O}(1-n/2)).
\end{align}
The short exact sequence \eqref{SES} immediately identifies the \emph{dualizing sheaf}, given by the Berezinian bundle $\Ber_{\wy} = \Ber \Omega_{\wy}^1$, as $\Ber (\mc{O}(-1)du \oplus \mc{O}(-1)dv \oplus \mc{O}(n/2 -1)d\theta) = \Pi \mc{O}(-n/2-1)$. Since we are interested in the twisted cotangent sheaf, let us twist \eqref{SES} by $\mc{O}(2)$ to get a short exact sequence
\begin{equation}
    \label{SES2}
0 \to \Omega_{\wy}^1(2) \to \mc{O}(1) du \oplus \mc{O}(1) dv \oplus \mc{O}(n/2 +1) d\theta \xrightarrow{p} \mc{O}(2) \to 0,
\end{equation}
where we slightly abused notation and denoted the right map by $p$, taking into account that it is given by the same formulas \eqref{p}. By super Serre duality \cite{voronov-manin-penkov}, $H^1(\mc{O}(1)) = H^0(\Pi \mc{O}(-n/2-2))^*$. This group vanishes, as the elements of the homogeneous coordinate ring have degree at least $1 - n/2$. Likewise, $H^1(\Pi \mc{O}(n/2+1)) = H^0(\mc{O}(-n-2))^* = 0$ and $H^1(\mc{O}(2)) = H^0(\Pi \mc{O}(-n/2-3))^* = 0$.
Thus, the long exact sequence associated with \eqref{SES2} looks as follows:
\begin{multline*}
0 \to H^0(\Omega_{\wy}^1(2)) \to H^0(\mc{O}(1)) du \oplus H^0(\mc{O}(1)) dv \oplus H^0(\mc{O}(n/2 + 1)) d\theta \\
\xrightarrow{H^0(p)} H^0(\mc{O}(2))
\to H^1(\Omega_{\wy}^1(2)) \to 0.
\end{multline*}
Now we can easily identify $H^0(\mc{O}(1))$ , $H^0(\mc{O}(2))$, and $H^0(\mc{O}(n/2+1))$ as well as the map $H^0(p)$:
\begin{align*}
  H^0(\mc{O}(1)) & = \left\{ a_0 u + a_1 v + \sum_{i=0}^{n/2} b_i u^{n/2-i}  v^i \theta \right\},\\
 H^0(\mc{O}(n/2+1)) & = \left\{\sum_{i=0}^{n/2+1} c_i u^{n/2+1-i} v^i + \sum_{i=0}^{n} d_i u^{n-i}  v^i \theta \right\},\\
 H^0(\mc{O}(2)) & = \left\{e_0 u^2 + e_1 uv + e_2 v^2 + \sum_{i=0}^{n/2+1} f_i u^{n/2+1-i}  v^i \theta \right\},
\end{align*}
where all the coefficients $a_i, b_i, c_i, d_i, e_i,$ and $f_i$ are elements of the ground field $k$. From the explicit formulas \eqref{p}. it is clear that $H^0(p)$ is surjective, whence
\[
H^1(\Omega_{\wy}^1(2)) = 0
\]
and
\[
\dim H^0(\Omega_{\wy}^1(2)) = n+2 \, | \, n+2. \qedhere
\]
\end{proof}

Computing the kernel of $H^0(p)$ in the proof of the proposition, we see that the forms
\begin{gather}
\nonumber
u dv - v du,\\
\nonumber
u^n \theta d\theta, u^{n-1} v \theta d\theta,  \dots, v^n \theta d\theta,\\
\label{basis}
u^{n/2-1}\theta (udv-vdu), u^{n/2-2} v\theta (udv-vdu), \dots, v^{n/2-1} \theta (udv-vdu),\\
\nonumber
u^{n/2} (u d\theta -\theta du), u^{n/2-1}v (ud\theta - \theta du), \dots, v^{n/2} (u d\theta - \theta du),\\
\nonumber
v^{n/2}(vd\theta - \theta dv)
\end{gather}
make up a basis of the super vector space $H^0(\Omega_{\wy}^1(2))$, where the first $n+2$ elements are even and the last $n+2$ elements are odd.

\begin{corollary}
\label{H0}
The $B$-module $H^0(Z, \Omega_{Z/S}^1(2))$ is free of rank $(n+2 \, | \, n+2)$. Moreover, it has a basis whose reduction to $B_{\bos} = k$ is given by formulas \eqref{basis}.
\end{corollary}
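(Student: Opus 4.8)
The plan is to deduce the corollary from Proposition \ref{7-18} by the same cohomology-and-base-change machinery already used in the proof of Theorem \ref{versal}. Write $\pi \colon Z \to S$ for the structure morphism. Since $S = \Spec B$ is affine, the object of interest is the $B$-module $H^0(Z, \Omega_{Z/S}^1(2)) = \Gamma(S, \pi_* \Omega_{Z/S}^1(2))$. The sheaf $\Omega_{Z/S}^1(2)$ is locally free on $Z$ (of rank $(1|1)$), hence flat over $S$, and $\pi$ is smooth and proper. Crucially, $B$ is a local Artinian $k$-superalgebra with residue field $B_{\bos} = k$ and odd maximal ideal $\mathfrak{m} = (\eta_1, \dots, \eta_{n/2-2})$, so $S$ has a single topological point; its fiber is $Z \times_S \Spec k \cong \wy$ (set all $\eta_i = 0$ in \eqref{gluingformula1}), on which $H^1(\wy, \Omega_{\wy}^1(2)) = 0$ by Proposition \ref{7-18}.

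First I would feed the vanishing $H^1(\wy, \Omega_{\wy}^1(2)) = 0$ into the base change theorem, cf.\ \cite[Theorem 8.5.9 and Remark 8.5.10]{FGA}: it yields $R^1\pi_* \Omega_{Z/S}^1(2) = 0$ and shows that $\pi_* \Omega_{Z/S}^1(2)$ is locally free and commutes with arbitrary base change. Because $B$ is local, a locally free sheaf on $S$ corresponds to a finitely generated projective, hence free, $B$-module; so $H^0(Z, \Omega_{Z/S}^1(2))$ is free. Base-changing along $B \to B_{\bos} = k$ identifies its reduction with the fiber cohomology $H^0(\wy, \Omega_{\wy}^1(2))$, whose super dimension is $(n+2 \, | \, n+2)$ by Proposition \ref{7-18}; this pins down the rank.

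Over a non-reduced super base one may prefer a self-contained substitute for base change, mirroring Theorem \ref{versal}: filter $B$ by powers of $\mathfrak{m}$, so that $S$ is built from $\Spec k$ by a chain of square-zero extensions whose graded pieces twist $\Omega_{\wy}^1(2)$ by $\mathfrak{m}^i/\mathfrak{m}^{i+1}$. The vanishing of $H^1$ on each graded piece kills the connecting maps in the associated long exact sequences, forcing the restriction $H^0(Z, \Omega_{Z/S}^1(2)) \to H^0(\wy, \Omega_{\wy}^1(2))$ to be surjective and the module to be free of the stated rank. For the \emph{moreover} clause, I would lift the $n+2\,|\,n+2$ forms of \eqref{basis} to elements of $H^0(Z, \Omega_{Z/S}^1(2))$; as they reduce to a $k$-basis modulo $\mathfrak{m}$ and number exactly $\dim_k H^0(\wy, \Omega_{\wy}^1(2))$, the super Nakayama lemma makes them a $B$-basis. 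The one delicate point — hence the step to treat carefully — is precisely the legitimacy of cohomology and base change over the non-reduced superscheme $S$; this is exactly where the $\mathfrak{m}$-adic filtration, modeled on Theorem \ref{versal}, provides a rigorous and purely algebraic replacement.
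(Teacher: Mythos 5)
Your proof is correct and follows essentially the same route as the paper: the vanishing $H^1(\wy, \Omega_{\wy}^1(2)) = 0$ from Proposition \ref{7-18} gives local freeness of $\pi_*\Omega_{Z/S}^1(2)$ by cohomology and base change, locality of $B$ upgrades this to freeness, and Nakayama's lemma promotes the lifted forms of \eqref{basis} to a $B$-basis. Your additional $\mathfrak{m}$-adic filtration argument is a sensible way to make the base-change step explicit over the non-reduced base $S$, but it is elaboration rather than a different method.
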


\begin{proof}
Since $H^1(\Omega_{\wy}^1(2)) = 0$, the $B$-module $H^0(\Omega_{Z/S}^1(2))$ is locally free. But the ring $B$ is local, so this $B$-module is actually free. The statement about the basis follows from Nakayama's lemma.
\end{proof}

We view 
\[
\mathbb{H}^0(\Omega_{Z/S}^1(2)) := \on{Spec} \mc{S}(H^0(\Omega_{Z/S}^1(2))^\vee)
\]
as a superscheme over $S$, a relative affine superspace of relative dimension $(n+2 \, | \, n+2)$, to be more precise.
If we choose a basis 
\[
\{x_1, x_2, \dots, x_{n+2}\, | \,  \xi_1, \dots, \xi_{n+2}\}
\]
for $H^0(\Omega_{Z/S}^1(2))^\vee$
dual to the basis \eqref{basis}, then we can identify this affine superspace with the standard one:
\[
\mathbb{H}^0(\Omega_{Z/S}^1(2)) = \on{Spec} \mc{S}(H^0(\Omega_{Z/S}^1(2))^\vee) \cong \on{Spec} B[x_1, \dots, x_{n+2}\, |\,  \xi_1, \dots, \xi_{n+2}].
\]

\begin{sloppypar}
The image of a framed pre-SUSY structure $u: \mc{O}(-2) \hookrightarrow \Omega_{Z/S}^1$ under the isomorphism $\on{Hom} (\mc{O}(-2), \Omega_{Z/S}^1) \xrightarrow{\sim} H^0(\Omega_{Z/S}^1(2))$  is a global section  $\mu \in H^0(\Omega_{Z/S}^1(2))^+$ whose coefficient by $udv-vdu$ is invertible. 
\end{sloppypar}

We, therefore, take the first coordinate $x_1$ to be invertible and identify the open affine sub-superscheme
\begin{equation}
\label{W}
W:= \on{Spec} B[x_1, x_1^{-1}, \dots, x_{n+2}\, | \, \xi_1, \dots, \xi_{n+2}] \subset \mathbb{H}^0(\Omega_{Z/S}^1(2))
\end{equation}  
as the superscheme of isomorphism classes of framed pre-SUSY structures on $Z$. 

\subsubsection*{Framed SUSY structures}
\label{zassrs}

By Corollary \ref{H0}, the space of framed pre-SUSY structures on $Z$ is represented by the superscheme $W$, which is an open subsuperscheme of the affine superspace of global sections of $\Omega^1_{Z/S}(2)$. Here we describe explicitly an open subsuperscheme of $W$ corresponding to the framed SUSY structures on $Z$. These are those framed pre-SUSY structures whose Ramond divisor $\mc{R}$ is unramified over the base, which means that the bosonization $\mc{R}_{\bos}$ is unramified over the bosonization of the base. This is equivalent to saying that, for each closed point $w$ of the base, the fiber $\mc{R}_{\bos}|_w$ over $w$ is a \emph{reduced divisor}, \emph{i.e}., , all its coefficients are equal to 1: $\mc{R}_{\bos}|_w = r_1 + \dots + r_{n}$ with $r_1, \dots, r_{n}$ being distinct closed points of $\wy_{\bos} = \mathbb{P}^1$.

By construction, $Z \times_S W$ has a canonical framed pre-SUSY structure whose restriction to $\wy \times W$ over $S_{\bos}$ is given by the 1-form
\begin{multline}
\label{canonicalpresusy}
x_1 (udv - vdu) + p(u,v) \theta d\theta
+ q(u,v) \theta (udv - v du) \\+ r(u,v) (u d\theta -\theta du) + \xi_{n+2} v^{n/2} (v d\theta -\theta dv)
\end{multline}
of homogeneity degree two, where
\begin{align}
\label{poly}
p(u,v) & := x_2 u^{n}+ x_3 u^{n-1} v + \dots + x_{n+2} v^{n},\\
\nonumber
q(u,v) & := \xi_1 u^{n/2-1} + \xi_2 u^{n/2-2} v + \dots + \xi_{n/2} v^{n/2-1},\\
\nonumber
r(u,v) & := \xi_{n/2+1} u^{n/2} + \xi_{n/2+2} u^{n/2-1} v + \dots + \xi_{n+1} v^{n/2}.
\end{align}
The corresponding framed pre-SUSY structure on $Z \times_S W$ defines a Ramond divisor $\mc{R}$ on $Z \times_S W$. Since we are interested in the unramification condition on $\mc{R}_{\bos}$, we can reduce the 1-form \eqref{canonicalpresusy} further to $\wy \times W_{\bos}$ to get a $1$-form
\begin{equation}
\label{canonicalpresusybos}
\varpi := x_1 (udv - v du) + p(u,v) \theta d \theta
\end{equation}
on $\wy$.

\begin{lemma}
The Ramond divisor $\mc{R}_{\bos}$ on $\wy \times W_{\bos}$ is given by the equation $p(u,v) = 0$.
\end{lemma}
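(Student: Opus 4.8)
The plan is to pass to the standard affine chart of $\wy$, write the SUSY one-form there, and compute directly the map of line bundles from \eqref{max-int} whose vanishing locus defines the Ramond divisor; its local equation will turn out to be $p$.

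First I would restrict $\varpi$ from \eqref{canonicalpresusybos} to the chart $U = \Spec k[z\,|\,\zeta]$, where $z = v/u$ and $\zeta = \theta u^{n/2-1}$. Since $\varpi$ is a section of $\Omega^1_{\wy}(2)$ and $\mc{O}(2)$ is trivialized over $U$ by $u^2$, the restriction becomes the honest relative one-form
\[
\omega := \varpi/u^2 = x_1\,dz + \tilde{p}(z)\,\zeta\,d\zeta, \qquad \tilde{p}(z) := p(1,z) = x_2 + x_3 z + \dots + x_{n+2}z^n,
\]
where the $x_i$ are functions on $W_{\bos}$, constant along the fibre, and $x_1$ is invertible. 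This $\omega$ generates the SUSY line subbundle $\mc{L} = \ker(\Omega^1_{\wy} \twoheadrightarrow \mc{G})$ over $U$, while $\mc{G} = \Omega^1_{\wy}/\mc{L}$ is generated by the class $\overline{d\zeta}$.

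Next I would evaluate the composite $\mc{L}\hookrightarrow \Omega^1_{\wy} \xrightarrow{d} \Omega^2_{\wy} \longrightarrow \mc{G}^{\otimes 2}$ of \eqref{max-int} on the generator $\omega$. Because $x_1$ is constant along the fibre and $d(dz)=0$, the relative de Rham differential gives
\[
d\omega = \big(\tilde{p}'(z)\,\zeta\,dz + \tilde{p}(z)\,d\zeta\big)\wedge d\zeta = \tilde{p}'(z)\,\zeta\,dz\wedge d\zeta + \tilde{p}(z)\,d\zeta\wedge d\zeta.
\]
The key point is the parity bookkeeping: since $d$ is odd, $d\zeta$ is \emph{even}, so $d\zeta\wedge d\zeta \neq 0$ and generates $\mc{G}^{\otimes 2}$ as $\overline{d\zeta}^{\otimes 2}$. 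In the quotient $\mc{G}$ the relation $\omega = 0$ reads $\overline{dz} = -x_1^{-1}\tilde{p}(z)\,\zeta\,\overline{d\zeta}$, so projecting $d\omega$ to $\mc{G}^{\otimes 2}$ kills the first term (it acquires a factor $\zeta^2 = 0$) and leaves $\tilde{p}(z)\,\overline{d\zeta}^{\otimes 2}$. Thus, over $U$, the map $\mc{L}\to\mc{G}^{\otimes 2}$ of \eqref{max-int} is multiplication by $\tilde{p}(z)$.

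By Definition \ref{SRSdef} and \eqref{max-int}, the Ramond divisor is precisely the zero locus of this map of line bundles (the divisor by which $\mc{G}^{\otimes 2}$ must be twisted down for the map to be an isomorphism onto its image). Hence over $U$ it is cut out by $\tilde{p}(z) = 0$, which homogenizes to $\{p(u,v) = 0\}$ on $\wy\times W_{\bos}$; the same computation in the chart $V$ (equivalently, homogeneity of $p$) shows the local equations patch. I expect the only real obstacle to be the super-sign accounting in the third paragraph — correctly using that $d\zeta$ is even while the competing cross term dies by $\zeta^2=0$, and matching the abstract map of \eqref{max-int} with this concrete multiplication operator; the dehomogenization and patching are routine.
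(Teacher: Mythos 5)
Your proof is correct, and it takes a route dual to the paper's. The paper also restricts $\varpi$ to the chart $U$, obtaining $x_1\,dz + p(1,z)\,\zeta\,d\zeta$ exactly as you do, but then passes to the kernel distribution: it writes down the generating vector field $x_1 \delp{\zeta} - p(1,z)\,\zeta \delp{z}$ of $\mc{D} = \ker\varpi$, computes its supercommutator with itself, $-2x_1 p(1,z)\delp{z}$, and reads off the Ramond divisor as the degeneracy locus $p(1,z)=0$ of that commutator (the formulation of Definition \ref{srswithpunctures}). You instead stay entirely within the dual formulation of Definition \ref{SRSdef}, computing the composite $\mc{L}\hookrightarrow \Omega^1 \xrightarrow{d} \Omega^2 \to \mc{G}^{\otimes 2}$ on the generator $\omega$ and showing it is multiplication by $\tilde p(z)$; your parity bookkeeping ($d\zeta$ even, so $d\zeta\wedge d\zeta\neq 0$, while the cross term dies by $\zeta^2=0$ after substituting the relation $\overline{dz} = -x_1^{-1}\tilde p\,\zeta\,\overline{d\zeta}$ in $\mc{G}$) is the correct super-phenomenon and is exactly what makes this work. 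What each buys: your computation never leaves the language in which the framed pre-SUSY structures of this section are actually defined (kernels of quotients of $\Omega^1$), so there is no translation step to justify; the paper's supercommutator computation is shorter, needs no care with the de Rham parity conventions, and connects directly to the physical picture of the Ramond divisor as the locus where $D^2$ degenerates. Both finish identically, by the symmetry of $\varpi$ in $u$ and $v$ (your chart-$V$ remark) and homogenization to $p(u,v)=0$.
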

\begin{proof}
Indeed, the form $\varpi$ restricts to
\begin{equation}
\label{varpi}
x_ 1 dz + p(1,z) \zeta d\zeta
\end{equation}
over the chart $U = \{[u:v \, | \, \theta] | u \neq 0 \} \subset \wy$ with standard affine coordinates $(z \, | \, \zeta) = ( \frac{v}{u} \, | \, \frac{\theta}{u})$.
The distribution $\mc{D} = \ker \varpi$ corresponding to \eqref{canonicalpresusybos} is locally generated by the vector field
\[
x_1 \delp{\zeta} - p(1,z) \zeta \delp{z},
\]
whose supercommutator with itself is
\[
- 2 x_1 p(1,z) \delp{z}.
\]
The Ramond divisor $\mc{R}_{\bos} \vert_{U \times W_{\bos}}$ is defined by the function $p(1,z)$. Since the form \eqref{canonicalpresusybos} is symmetric in $u$ and $v$ up to signs, the same argument will work in the chart $V = \{v \ne 0\}$ with affine coordinates $(w \, | \, \chi) = ( \frac{u}{v} \, | \, \frac{\theta}{v})$. Indeed,  the Ramond divisor $\mc{R}_{\bos} \vert_{V \times W_{\bos}}$ is  defined by the function $p(w,1)$. Getting back to the homogeneous coordinates $[u:v \, | \, \theta]$ on $\wy$, we see that the Ramond divisor $\mc{R}_{\bos}$ on $\wy \times W_{\bos}$ is defined by the homogeneous equation $p(u,v) = 0$.
\end{proof}

The unramification condition on $\mc{R}_{\bos}$ is equivalent to $p(u,v)$ having no multiple factors. Thus, the ramification locus on $W_{\bos}$ is the closed set given by homogeneous discriminant $\on{Disc}^h (p)$ of \eqref{poly}: $\on{Disc}^h (p) = 0$. Accordingly, the unramification locus is the open set $\on{Disc}^h (p(u,v)) \ne 0$, which defines the open subsuperscheme
\begin{equation}
\label{Y}
Y = \Spec B[x_1, x_1^{-1}, x_2,  \dots, x_{\nr+2}, (\on{Disc}^h (p))^{-1} \, | \, \xi_1, \dots, \xi_{\nr+2}] \subset W,
\end{equation}
the space of framed SUSY structures on $Z/S$.

\subsubsection*{Unframed SUSY structures}

Now we are ready to ``unframe'' SUSY structures on $Z$. For that, we need to extend the group $k$-superscheme $\Gamma^*$ used earlier to define $\awy$ and acting on sheaves on $\wy$ and their cohomology to a group $S$-superscheme $\Gamma^*_Z$ acting on sheaves on $Z$ and their cohomology as well as on $Y$. The group superscheme $\Gamma^*_Z$ over $S$ represents the functor
\begin{align}
    \Gamma^*_Z: \sS/S & \to \on{Group}, \\
   \nonumber T & \mapsto \Gamma((\mc{O}_{Z \times_S T}^+)^*).
\end{align}
This group superscheme is based on an open subsuperscheme in the relative affine superspace $\mathbb{H}^0 (Z, \mc{O}_Z)$ (or, rather, the one corresponding to the super vector bundle $\pi_* \mc{O}_Z$ over $S$, where $\pi: Z \to S$ is the structure projection). The sheaf $\pi_* \mc{O}_Z$ is locally free of rank equal to $\dim H^0 (\wy, \mc{O}_{\wy}) = (1 \,| \, \nr/2)$, because of the vanishing $H^1 (\wy, \mc{O}_{\wy}) = 0$ and the cohomology and base change argument, as in the proof of Proposition \ref{zisprojective}.2. The open subsuperscheme is defined by the complement to the zero section of $H^0(\wy, \mc{O}_{\wy}^+)$. Thus, $\Gamma^*_Z$ is a group superscheme over $S$ of relative dimension $\dim \Gamma^*_Z/S = (1 \,| \, \nr/2)$.

\begin{theorem}
\label{quotientequal}
The $S$-superscheme $Y/\Gamma^*_Z$ of relative dimension $\dim Y/S - \dim \Gamma^*_Z/S = (\nr+1 \, | \, \nr/2+2)$ represents the set of SUSY structures on $Z$. In particular, the fiber product $\mc{Y}$ in diagram \eqref{main} is represented by $Y/\Gamma^*_Z$.
\end{theorem}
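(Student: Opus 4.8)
The plan is to realize the forgetful map $Y \to \mc{Y}$, which sends a framed SUSY structure to its underlying SUSY structure, as a torsor under the group superscheme $\Gamma^*_Z$, and to deduce that $\mc{Y} \cong Y/\Gamma^*_Z$ as fppf sheaves over $S$. Recall that via the identification $\Hom(\mc{O}_Z(-2), \Omega_{Z/S}^1) \cong H^0(Z, \Omega_{Z/S}^1(2))$ together with Corollary \ref{H0}, the open subsuperscheme $W$ of \eqref{W} parameterizes framed pre-SUSY structures $\mu\colon \mc{O}_Z(-2) \hr \Omega_{Z/S}^1$ (the invertibility of $x_1$ guaranteeing that $\mu$ is a subbundle inclusion), while the further open locus $Y$ of \eqref{Y}, cut out by inverting the discriminant, parameterizes exactly those framed pre-SUSY structures whose Ramond divisor is unramified of degree $\nr$, i.e., the framed SUSY structures in the sense of Definition \ref{SRSdef}. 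The same description holds over any $T \to S$ after base change.

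First I would describe the action and the fibers of the forgetful map. An element $g \in \Gamma^*_Z(T) = \Gamma((\mc{O}_{Z \times_S T}^+)^*)$ is precisely an automorphism of the line bundle $\mc{O}_{Z \times_S T}(-2)$, and it acts on a framed SUSY structure $\mu$ by $\mu \mapsto g\mu$, that is, by rescaling the defining one-form; this is the action on $Y$ referred to in the statement. Since $\mu$ is nowhere vanishing (its leading coefficient $x_1$ is invertible), the relation $g\mu = \mu$ forces $g = 1$, so the action is free. Moreover, two framed SUSY structures $\mu, \mu'$ have the same image subbundle $\mc{L} = \im\mu = \im\mu' \subset \Omega_{Z/S}^1$, hence define the same underlying SUSY structure, if and only if $\mu' = \mu \circ g$ for the unique $g \in \Aut(\mc{O}_Z(-2)) = \Gamma^*_Z$ relating their two framings of $\mc{L}$. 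Thus the fibers of $Y \to \mc{Y}$ are exactly the $\Gamma^*_Z$-orbits.

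The main obstacle is surjectivity of $Y \to \mc{Y}$ as a map of fppf sheaves, equivalently the local existence of framings. Given a SUSY structure on $Z \times_S T$, its line bundle $\mc{L} = \ker(\Omega_{Z/S}^1 \twoheadrightarrow \mc{G})$ restricts to $\mc{O}_{\wy}(-2)$ on every geometric fiber by Corollary \ref{-2}. Hence $\mc{L} \otimes \mc{O}_{Z \times_S T}(2)$ is a line bundle that is trivial on fibers; since the fibers are connected and $H^1(\wy, \mc{O}_{\wy}) = 0$, a seesaw/cohomology-and-base-change argument (as in the proof of Proposition \ref{zisprojective}.2) shows that it is the pullback of a line bundle from $T$, and therefore becomes trivial Zariski-locally on $T$. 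On such a locus a framing $\mc{L} \lgr \mc{O}_Z(-2)$ exists, unique up to $\Gamma^*_Z$ by Proposition \ref{otwodeformsuniquely}, so the given SUSY structure lifts to a $T$-point of $Y$. Combined with the previous paragraph, this exhibits $Y \to \mc{Y}$ as a $\Gamma^*_Z$-torsor, whence $\mc{Y} \cong Y/\Gamma^*_Z$ as fppf sheaves, and the identification of $\mc{Y}$ with the fiber product in \eqref{main} is then immediate.

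It remains to see that this quotient is representable by a superscheme and to compute its dimension. Since $x_1$ is invertible and, by Lemma \ref{Gamma*}, $\Gamma^*_Z \cong \gm \times (\mathbb{G}_a^{0|1})^{\nr/2}$ over $S$, the free action admits an explicit global slice: the $\gm$-factor is rigidified by normalizing $x_1 = 1$, and the odd additive factors act by translations that normalize $\nr/2$ of the odd coordinates, so $Y/\Gamma^*_Z$ is realized as an affine superscheme over $S$ (alternatively, representability of the quotient of an affine superscheme by a free action of an affine algebraic group superscheme follows from \cite{fioresi2021quotients}). Finally, $\dim Y/S = (\nr+2 \, | \, \nr+2)$ while $\dim \Gamma^*_Z/S = (1 \, | \, \nr/2)$, so the free quotient has relative dimension $(\nr+2-1 \, | \, \nr+2-\nr/2) = (\nr+1 \, | \, \nr/2+2)$, as claimed.
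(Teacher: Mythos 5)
Your proof is correct and takes essentially the same route as the paper: both exhibit $Y$ (framed SUSY structures) as a $\Gamma^*_Z$-torsor over the functor $\mc{Y}$ of unframed SUSY structures, using the invertibility of the coefficient $x_1$ for freeness of the action and the uniqueness of the deformation $\mc{O}_Z(-2)$ for the existence of framings. You additionally spell out two points the paper leaves implicit --- the local existence of framings over an arbitrary base $T \to S$ (via seesaw/cohomology-and-base-change) and the representability of the quotient via an explicit slice --- which strengthens rather than changes the argument.
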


\begin{proof} Recall that a framing on a SUSY structure on $Z$ was defined by fixing an isomorphism $c: \mc{L} \lgr \mc{O}_{Z}(-2)$.
The superscheme of isomorphisms $c$
is a $\Gamma^*_Z$-torsor.
We claim that the group superscheme $\Gamma^*_Z$ acts freely on the superscheme $Y$. Observe that the group superscheme $\Gamma^*_Z$ acts on
the relative affine superspace $\mathbb{H}^0(Z, \Omega_{Z/S}^1(2))$ by multiplication. Therefore, $\Gamma^*_Z$ also acts on the open subsuperschemes $W$, see \eqref{W}, and on the open subsuperscheme $Y$, see \eqref{Y}. And this action is free, because it is free on the element $udv - v du \in H^0(\wy, \Omega_{\wy}^1(2))$, see \eqref{basis}, corresponding to the coordinate $x_1$ on $\mathbb{H}^0(\wy, \Omega_{\wy}^1(2))$, which is invertible on $W$ by construction.
\end{proof}

Now we can replace $\mc{Y}$ with $Y/\Gamma^*_Z$ in \eqref{main}, and it 
follows that the morphism $Y/\Gamma^*_Z \to \mnr$ is smooth, surjective, and representable. In particular, we have

\begin{theorem} \label{maintheorem} $\mnr \cong [(Y/\Gamma^*_Z)/\awy]$. 
\end{theorem}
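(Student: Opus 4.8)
The plan is to extract from the fiber square \eqref{main} the fact that, once $\mc{Y}$ is identified with the superscheme $Y/\Gamma^*_Z$, the projection $Y/\Gamma^*_Z \to \mnr$ is an $\awy$-torsor, and then to invoke the standard identification of a superstack carrying a $G$-torsor with the associated quotient superstack. Concretely, I would combine three inputs already established: Theorem \ref{artin}, which gives $M = M(1-\nr/2) = [S/\awy]$; Theorem \ref{quotientequal}, which identifies the fiber product $\mc{Y}$ in \eqref{main} with $Y/\Gamma^*_Z$; and the remark preceding this theorem, that $Y/\Gamma^*_Z \to \mnr$ is smooth, surjective, and representable.

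First I would promote this last map to a torsor. For any quotient superstack the canonical atlas $S \to [S/\awy]$ is an $\awy$-torsor, with $S \times_{[S/\awy]} S \cong \awy \times S$ realized by the action groupoid. Pulling the torsor $S \to M$ back along the forgetful morphism $\mnr \to M$, and using that torsors are stable under base change, the left vertical arrow $\mc{Y} = \mnr \times_M S \to \mnr$ of \eqref{main} is again an $\awy$-torsor. Transporting this structure along the isomorphism $\mc{Y} \cong Y/\Gamma^*_Z$ of Theorem \ref{quotientequal} yields an $\awy$-torsor
\[
Y/\Gamma^*_Z \longrightarrow \mnr .
\]

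Next I would check that the $\awy$-action on $Y/\Gamma^*_Z$ coming from this torsor structure agrees with the explicit lifted action built in the section on the action of $\awy$ on $S$ and $Z$. By construction the torsor action covers the $\awy$-action on $S$ under which $S \to M$ is the quotient atlas, while the projection $Y/\Gamma^*_Z \cong \mc{Y} \to S$ (the top arrow of \eqref{main}) is $\awy$-equivariant. Since the lifted action on $Y/\Gamma^*_Z$ was defined precisely so as to cover the action on $S$, through the lift of $\awy$ from $S$ to the universal deformation $Z$ and hence to the space of SUSY structures on $Z$, the identification $\mc{Y} \cong Y/\Gamma^*_Z$ is $\awy$-equivariant and the two actions coincide. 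With $P := Y/\Gamma^*_Z$ a genuine superscheme (Theorem \ref{quotientequal}) and $P \to \mnr$ an $\awy$-torsor, the standard equivalence between a stack equipped with a $G$-torsor and the corresponding quotient stack (see, e.g., \cite{olsson2016algebraic}) then gives $\mnr \cong [P/\awy] = [(Y/\Gamma^*_Z)/\awy]$; the requisite algebraicity is guaranteed by representability of the diagonal of $M$, and hence of $\mnr$, via Corollary \ref{diagonalrep}.

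I expect the main obstacle to be exactly the compatibility check in the third step: confirming that the abstract $\awy$-action obtained by base-changing the atlas $S \to [S/\awy]$ really is the hands-on lifted action on $Y/\Gamma^*_Z$, together with the bookkeeping of the topology (fppf) in which the torsor trivializes, so that the quotient-stack identification applies verbatim. If one prefers to avoid the explicit action-matching, the same conclusion can be reached by mirroring the proof of Theorem \ref{artin}: define a morphism $[(Y/\Gamma^*_Z)/\awy] \to \mnr$ directly from the $\awy$-equivariant family $Z \times_{S} Y/\Gamma^*_Z$, and then apply Lemma \ref{technicallemma} to reduce the claim to the isomorphism of the two fiber products $(Y/\Gamma^*_Z) \times_{\mnr} (Y/\Gamma^*_Z)$ and $(Y/\Gamma^*_Z) \times_{[(Y/\Gamma^*_Z)/\awy]} (Y/\Gamma^*_Z)$, both of which are $\awy$-torsors over $Y/\Gamma^*_Z$ and are compared exactly as in diagram \eqref{good}.
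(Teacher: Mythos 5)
Your proposal is correct and takes essentially the same route as the paper: the paper's proof also rests on Theorem \ref{quotientequal} to identify the fiber product in \eqref{main} with $Y/\Gamma^*_Z$, descends the lifted $\awy$-action to $Y/\Gamma^*_Z$, and then verifies that for every superscheme $T$ the fiber product $T \times_{\mnr} (Y/\Gamma^*_Z)$ is an $\awy \times T$-torsor --- which is precisely what your base change of the atlas torsor $S \to [S/\awy]$ gives --- before concluding $\mnr \cong [(Y/\Gamma^*_Z)/\awy]$. Your write-up merely makes explicit the justifications (stability of torsors under base change, the action-compatibility check, and the standard torsor-to-quotient-stack equivalence) that the paper leaves implicit.
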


\begin{proof}
The action of $\awy$ on $S$ lifts naturally to the relative affine superspace $\mathbb{H}^0(Z, \Omega_{Z/S}^1(2))$ and the open subsuperscheme $Y \subset \mathbb{H}^0(Z, \Omega_{Z/S}^1(2))$, preserves the orbits of the action of $\Gamma^*_Z$, and, thereby, 
descends to an action on $Y/\Gamma^*_Z$.

Moreover, for every superscheme $T$, the fiber product of the diagram
\begin{center}
\begin{tikzcd}
& Y/\Gamma^*_Z \arrow[r] \arrow[d] & S \arrow[d]        \\
  T \arrow[r]  & \mnr \arrow[r] & {M(1-\nr/2)=[S/\awy]}
\end{tikzcd}
\end{center}
is isomorphic to an $\awy \times T$-torsor; and so
$\mnr \cong [(Y/\Gamma^*_Z)/\awy]$.
\end{proof}

\subsubsection*{Automorphisms of $\nr$-punctured SUSY curves} \label{automorphisms}

We would also like to show that $\mnr$ is a Deligne-Mumford superstack. To do that, we need to show that $\awy(\on{Spec} k)$ acts  on $(Y/\Gamma^*_Z)(\on{Spec}k)$ with finite stabilizers. The set $(Y/\Gamma^*_Z)(\on{Spec} k)$ is in bijection with the set of SUSY structures on $\wy$. The SUSY structure on $\wy$ are locally, in the chart $U$, generated by the $1$-forms $dz + p(z) \zeta d \zeta$, see \eqref{varpi}, and $\awy(\on{Spec} k)$ naturally acts on these via pullback. 
To prove that $\awy(\on{Spec} k)$ acts with finite stabilizers, we will show that the only nontrivial automorphism of $\wy$ with SUSY structure locally generated by the $1$-form $dz + p(z) \zeta d \zeta$ is the canonical automorphism sending $z \mapsto z$ and $\zeta \mapsto - \zeta$.

A \emph{superconformal vector field} on a super Riemann surface $\Sigma$ based on $\wy$ is a vector field $\mathfrak{X}$ which preserves the SUSY structure, in the sense that  $[\mc{D}, \mathfrak{X}] \subset \mc{D}$. We denote by $\mc{A}_{\Sigma}$ the sheaf of superconformal vector fields on $\Sigma$.
Following \cite[Section 4.2.1]{witten2012notes}, we can compute the space of local sections of $\mc{A}_{\Sigma}$. In our standard chart $U$, the odd component  $H^0(U, \mc{A}|_{U})^-$ consists of the odd vector fields
\begin{align}
\label{oddsuperconformalvectorfield}
f(z) \left( \delp{\zeta} - p(z) \zeta \delp{z} \right), \end{align}
where $p(z)$ denotes the defining local section of the Ramond divisor $\mc{R}$ and $f(z)$ is an arbitrary even function on $U \subset \wy$.
The even component $H^0(U, \mc{A}|_{U})^+$ consists of the even vector fields
\begin{align}
\label{evensuperconformalvectorfield}
p(z) \left( g(z) \delp{z} + \frac{g'(z)}{2} \zeta \delp{\zeta} \right),
\end{align}
where $g(z)$ is an even function on $U$.

\begin{theorem}\label{infaut} If $\nr \ge 4$, $H^0(\mc{A}_{\Sigma})=0$. 

\end{theorem}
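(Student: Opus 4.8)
The plan is to compute $H^0(\mc{A}_{\Sigma})$ chart by chart, using the standard cover $\mc{U} = \{U = \on{Spec} k[z \mid \zeta],\ V = \on{Spec} k[w \mid \chi]\}$ of $\wy$ together with the explicit local descriptions \eqref{oddsuperconformalvectorfield} and \eqref{evensuperconformalvectorfield}. Since restriction and gluing preserve parity, I would treat the even and odd summands $H^0(\mc{A}_{\Sigma})^{+}$ and $H^0(\mc{A}_{\Sigma})^{-}$ separately. A global section amounts to a pair of local superconformal fields, one on each chart, agreeing on $U \cap V$; because each chart is an affine line, the coefficient functions $f, g$ in \eqref{oddsuperconformalvectorfield}, \eqref{evensuperconformalvectorfield} are forced to be honest polynomials, and the whole point is that the transition law across $U \cap V$ is incompatible with polynomiality on both sides once $\nr \ge 4$. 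Throughout I would use the coordinate change $w = 1/z$, $\chi = \zeta z^{\nr/2-1}$ from \eqref{gluingfunctionforwp}, together with the induced frame transformations $\delp{z} = -w^2\delp{w} + (\nr/2-1)\, w\chi\, \delp{\chi}$ and $\delp{\zeta} = w^{1-\nr/2}\delp{\chi}$, obtained from \eqref{gluingfunctionforwp} just as in the proof of Lemma \ref{infdef}, as well as the homogeneity identity $p(1,1/w) = w^{-\nr}\,p(w,1)$ for the degree-$\nr$ Ramond polynomial. Write $p(z) := p(1,z)$ and $\tilde p(w) := p(w,1)$ for its two affine restrictions.

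For the odd part, a global section restricts on $U$ to $f(z)\,D_U$ with $D_U = \delp{\zeta} - p(z)\zeta\,\delp{z}$ and $f \in k[z]$. Substituting the frame transformations into $D_U$ and using $\zeta^2 = \chi^2 = 0$, the term proportional to $\zeta\chi$ drops out and one computes $D_U = w^{1-\nr/2}\bigl(\delp{\chi} + \tilde p(w)\chi\,\delp{w}\bigr)$; the factor in parentheses is a generator of $\mc{D}$ over $V$, so the transition function of $\mc{D}$ on $U \cap V$ is $w^{1-\nr/2}$ (consistent with $\mc{D} \cong \Pi\mc{O}_{\wy}(1-\nr/2)$ from Corollary \ref{-2}). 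Since the odd superconformal fields over each chart are exactly the even-function multiples of a local generator of $\mc{D}$, matching the two expressions forces $\tilde f(w) = w^{1-\nr/2} f(1/w)$ on $V$. Writing $f(z) = \sum_{j \ge 0} a_j z^j$ gives $\tilde f(w) = \sum_{j} a_j w^{1-\nr/2-j}$, and polynomiality of $\tilde f$ requires $1 - \nr/2 - j \ge 0$ whenever $a_j \ne 0$. As $\nr \ge 4$ makes $1 - \nr/2 \le -1$ while $j \ge 0$, no such $j$ exists, whence $f = 0$ and $H^0(\mc{A}_{\Sigma})^{-} = 0$.

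For the even part, a global section is $p(z)\bigl(g(z)\,\delp{z} + \tfrac{1}{2}g'(z)\,\zeta\,\delp{\zeta}\bigr)$ on $U$ with $g \in k[z]$, and $\tilde p(w)\bigl(\tilde g(w)\,\delp{w} + \tfrac{1}{2}\tilde g'(w)\,\chi\,\delp{\chi}\bigr)$ on $V$ with $\tilde g \in k[w]$. I would transport the $U$-expression to the $V$-frame by the same substitutions (again annihilating the $\zeta^2$-terms) and read off the coefficient of $\delp{w}$, which works out to $-w^{2-\nr}\,\tilde p(w)\,g(1/w)$, while on the $V$-side it is $\tilde p(w)\,\tilde g(w)$. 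Because $\tilde p$ is a nonzero polynomial (the Ramond divisor has positive degree $\nr$) and $k[w]$ is a domain, cancelling $\tilde p$ yields the transition relation $\tilde g(w) = -w^{2-\nr}\,g(1/w)$; one then checks that the coefficients of $\chi\,\delp{\chi}$ agree automatically, so no extra constraint appears. With $g(z) = \sum_{j \ge 0} b_j z^j$ this reads $\tilde g(w) = -\sum_j b_j w^{2-\nr-j}$, and polynomiality of $\tilde g$ demands $2 - \nr - j \ge 0$, which is impossible for $\nr \ge 4$ and $j \ge 0$. Hence $g = 0$ and $H^0(\mc{A}_{\Sigma})^{+} = 0$, so that $H^0(\mc{A}_{\Sigma}) = 0$.

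The main obstacle I anticipate is the even computation: one must transport the field $p(z)\bigl(g\,\delp{z} + \tfrac12 g'\zeta\,\delp{\zeta}\bigr)$ correctly through the coordinate change --- carefully tracking the odd coordinate via $\zeta = \chi w^{\nr/2-1}$ and $\zeta^2 = 0$ and the homogeneity $p(1,1/w) = w^{-\nr}\tilde p(w)$ --- and then legitimately cancel the common factor $\tilde p$ to isolate the transition relation $\tilde g(w) = -w^{2-\nr}g(1/w)$. After that the vanishing is a one-line degree count. Conceptually, the two cases identify the bosonizations of $\mc{A}_{\Sigma}^{-}$ and $\mc{A}_{\Sigma}^{+}$ with line bundles of negative degrees $1-\nr/2$ and $2-\nr$ on $\pr$, neither of which has nonzero global sections once $\nr \ge 4$; the explicit chart computation is simply the cleanest way to see this and to locate the precise threshold $\nr \ge 4$.
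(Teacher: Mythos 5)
Your proof is correct, but it globalizes differently from the paper's. Both arguments start from the same local input, namely Witten's chart description \eqref{oddsuperconformalvectorfield}, \eqref{evensuperconformalvectorfield} of superconformal fields, but the paper imposes globality by first computing a basis \eqref{H0T} of \emph{all} global vector fields in $H^0(\wy,\mc{T}_{\wy})$ (a separate \v{C}ech computation) and then intersecting, on the single chart $U$, with the superconformal shape: the global odd fields $\sum_i b_i z^i\zeta\delp{z}$ have no $\delp{\zeta}$-component, which kills the odd part, and in the even case the equality would force $\nr=\deg p(z)\le 2$, contradicting $\nr\ge 4$. You never use $H^0(\mc{T}_{\wy})$: instead you compute the transition law of the sheaf $\mc{A}_{\Sigma}$ itself across $U\cap V$, namely $\tilde f(w)=w^{1-\nr/2}f(1/w)$ and $\tilde g(w)=-w^{2-\nr}g(1/w)$, which identifies the coefficient functions of $\mc{A}_{\Sigma}^{-}$ and $\mc{A}_{\Sigma}^{+}$ with sections of line bundles of negative degrees $1-\nr/2$ and $2-\nr$ on $\pr$, and then you conclude by a degree count. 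Your computations check out: the transported generator $D_U=w^{1-\nr/2}\bigl(\delp{\chi}+\tilde p(w)\chi\delp{w}\bigr)$ is right (and consistent with $\mc{D}\cong\Pi\mc{O}_{\wy}(1-\nr/2)$ from Corollary \ref{-2}), cancelling $\tilde p$ is legitimate since the relevant coefficients live in the domain $k[w]$, and the $\chi\delp{\chi}$-coefficients indeed match automatically once $\tilde g(w)=-w^{2-\nr}g(1/w)$ holds. As for what each approach buys: the paper's route is shorter once \eqref{H0T} is available and avoids transporting vector fields through the odd coordinate change of \eqref{gluingfunctionforwp}; yours is self-contained relative to $\mc{A}_{\Sigma}$, makes the sheaf-theoretic reason for vanishing visible (negative line bundles on $\pr$ have no global sections), and pinpoints exactly where the hypothesis $\nr\ge 4$ enters in each parity.
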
 

\begin{proof} We will show that the local sections of $\mc{A}$ described in \eqref{oddsuperconformalvectorfield} and \eqref{evensuperconformalvectorfield} do not extend to give global sections of $\mc{A}$. This will prove the theorem, since the restriction of any global section of $\mc{A}$ to $U$ must be equal to either  \eqref{oddsuperconformalvectorfield} or \eqref{evensuperconformalvectorfield}.

A \v{C}ech cohomology computation shows that the vector fields
\begin{equation}
    \label{H0T}
 \left\{ \delp{z}, z \delp{z}, z^2 \delp{z} + z \zeta \delp{\zeta}, \zeta \delp{\zeta} \, \left\lvert \; \zeta \delp{z}, z \zeta \delp{z}, \dots, z^{\nr/2+1} \zeta \delp{z} \right. \right\}
 \end{equation}
on $U$ extend to a basis of $H^0(\wy, \mc{T}_{\wy})$. This means that any odd (even) vector field on $\wy$ restricts to $U$ as a linear combination
\begin{align}
\label{oddvectorfield}
\mathfrak{X}^- & = \sum_{i=0}^{\nr/2+1} b_i z^i \zeta \delp{z}, \qquad \text{or} \\
\label{evenvectorfield}
\mathfrak{X}^+ & = \left(a_0 + a_1 z + a_2 z^2\right) \delp{z} + \left(a_3
+ a_2z \right)\zeta \delp{\zeta},
\end{align}
respectively.

Comparing \eqref{oddvectorfield} to \eqref{oddsuperconformalvectorfield}, observe that there do \emph{not} exist any global odd superconformal vector fields on $\wy$.
Comparing \eqref{evenvectorfield} to \eqref{evensuperconformalvectorfield}, we see that $\nr = \deg p(z) \le 2$, which 
contradicts the assumption that $\nr \ge 4$, and thus $\mathfrak{X}^+=0$. 
\end{proof} 

The above theorem implies that the group of automorphisms of a SUSY-structure on $\wy$ is discrete. We can actually improve this statement, as follows.

\begin{theorem}
\label{finitestabilizer}
For $\Sigma$ over $\on{Spec} k$, $\on{Aut}(\Sigma) \cong \mathbb{Z}/2\mathbb{Z}$.
\end{theorem}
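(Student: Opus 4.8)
The plan is to determine $\Aut(\Sigma)$ explicitly, using the discreteness already provided by Theorem \ref{infaut} together with the description of $\awy(\Spec k)$ in Theorem \ref{rep}. I would first record the inclusion $\Z/2\Z \hookrightarrow \Aut(\Sigma)$: the involution $\iota$ acting in the chart $U$ by $z \mapsto z$, $\zeta \mapsto -\zeta$ is the $k$-point $\theta \mapsto -\theta$ of $\awy$, it is the identity on the bosonic reduction $\wy_{\bos} = \pr$, and it fixes the SUSY $1$-form of \eqref{varpi}, since $\iota^*(\zeta\, d\zeta) = (-\zeta)(-d\zeta) = \zeta\, d\zeta$ while $\iota^*(dz) = dz$. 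It therefore preserves the SUSY line subbundle $\mc{L}$ and the labeled divisor $\mc{R}$, so $\iota \in \Aut(\Sigma)$.

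The substance of the argument is to show there is nothing else. Let $\phi \in \Aut(\Sigma)$. Since the $\nr$ Ramond punctures are labeled and $\phi$ respects the labeling, the induced automorphism $\phi_{\bos}$ of $\pr$ fixes each of the $\nr \ge 4$ distinct points of the reduced divisor $\mc{R}_{\bos}$; a M\"obius transformation fixing at least three points is the identity, so $\phi_{\bos} = \id$. I would then feed this into Theorem \ref{rep}. Over the purely even base $\Spec k$ there are no odd scalars, so in \eqref{aut11} every odd coefficient $\alpha_i, \beta_j$ vanishes, and a $k$-point of $\awy = \Aut(A)/\Gamma^*$ is represented by $u \mapsto au+bv$, $v \mapsto cu+dv$, $\theta \mapsto e\theta$ with $ad-bc \ne 0$, $e \in k^*$, taken modulo the residual $\gm = \Gamma^*(\Spec k)$ acting as in \eqref{A} by $u \mapsto r_0 u$, $v \mapsto r_0 v$, $\theta \mapsto r_0^{1-\nr/2}\theta$. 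The condition $\phi_{\bos} = \id$ forces the matrix $\left(\begin{smallmatrix} a & b \\ c & d \end{smallmatrix}\right)$ to be scalar; using the $\Gamma^*$-action to normalize it to the identity, $\phi$ becomes, in the chart $U$, the map $z \mapsto z$, $\zeta \mapsto b\zeta$ for a single constant $b \in k^*$.

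It remains to impose preservation of the SUSY structure, i.e.\ $\phi^*\mc{L} = \mc{L}$, equivalently $\phi^*\varpi = \lambda\varpi$ for a unit $\lambda$, where $\varpi = dz + p(z)\zeta\,d\zeta$ generates $\mc{L}$ on $U$. A direct computation gives $\phi^*\varpi = dz + b^2 p(z)\zeta\,d\zeta$, so comparing the coefficients of $dz$ and of $\zeta\,d\zeta$ (and using $p \not\equiv 0$, as $\mc{R}_{\bos} \ne \emptyset$) yields $\lambda = 1$ and $b^2 = 1$. Hence $b = \pm 1$ and $\Aut(\Sigma) = \{\id, \iota\} \cong \Z/2\Z$.

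The only step requiring care is the passage through Theorem \ref{rep}: one must be sure that over $\Spec k$ the stabilizer of the bosonic curve is exactly the one-parameter group $\gm$ of rescalings $\zeta \mapsto b\zeta$, with no surviving odd (``nilpotent'') automorphisms. This is precisely where the hypothesis that $\Sigma$ lives over $\Spec k$ is used: over a base carrying odd functions there would be additional automorphisms, whereas here the vanishing of odd scalars collapses the odd part of $\awy$, and the SUSY condition then cuts the remaining $\gm$ down to its subgroup $\{\pm 1\}$. The discreteness furnished by $H^0(\mc{A}_\Sigma) = 0$ in Theorem \ref{infaut} is a useful a priori consistency check that no positive-dimensional family of automorphisms can occur.
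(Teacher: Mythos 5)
Your proof is correct and follows essentially the same route as the paper's: reduce to the explicit description of $\awy(\on{Spec} k)$ coming from automorphisms of the graded ring $k[u,v\,|\,\theta]$, use the $\nr \ge 4$ labeled points of $\mc{R}_{\bos}$ to force the induced M\"obius transformation to be the identity, and then let preservation of the generating $1$-form $dz + p(z)\zeta\,d\zeta$ cut the remaining rescaling $\zeta \mapsto b\zeta$ down to $b = \pm 1$. The only differences are cosmetic and in your favor: you explicitly normalize away the $\Gamma^*$-ambiguity and explicitly verify that $\zeta \mapsto -\zeta$ is indeed a SUSY-automorphism, both of which the paper leaves implicit.
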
 

\begin{proof}  An automorphism of the homogeneous coordinate ring of $\wy$, \emph{i.e}., the $\Z$-graded $k$-su\-per\-al\-ge\-bra $A = k[u, v \, | \,\theta]$ with $u,v$ even of degree 1 and $\theta$ odd of degree $1-\nr/2$, sends
\begin{align} 
\nonumber
u & \mapsto au + bv  \\ 
\label{aut}
v & \mapsto cu + dv \\ 
\nonumber \theta & \mapsto e \theta 
\end{align} 
where $a,b,c,d \in k$, $e \in k^*$ and $ad - bc \neq 0$. The above map restricts on the chart $U = \on{Spec} k[z,\zeta]$ to the map
\begin{align*}
z & \mapsto \frac{c + dz}{a + bz},\\
\zeta & \mapsto e \zeta(a+ bz)^{\nr/2-1}.
\end{align*}

Let $dz + p(z) \zeta d \zeta$ denote a generating section of an $\nr$-punctured SUSY structure on $\wy$, where $p(z)$ is the local defining equation of the Ramond divisor, as in \eqref{varpi}.

An automorphism of $\awy$ which preserves the SUSY structure must preserve the Ramond divisor, in particular. The bosonic reduction a SUSY-automorphism must preserve the bosonic reduction of the Ramond divisor, which is
$\nr \ge 4$ marked points on $\pr$. The only automorphism preserving $\nr \ge 4$ marked points on $\pr$ is the identity. This immediately implies that $a=d$
and $b=c=0$ in \eqref{aut}.

Any such automorphism preserves the charts $U$ and $V$ and acts as
\[ dz + p(z) \zeta d \zeta \mapsto dz + e^2 a^{\nr-2} p(z) \zeta d \zeta \]
on the generating 1-form of the SUSY structure. 
Thus, $e a^{\nr/2-1} = \pm 1$. In particular, any SUSY-automorphism of $\wy$ must send $z \mapsto z$ and $\zeta \mapsto \pm \zeta$.
\end{proof}

\begin{corollary} \label{finat}
$\mnr$ is a Deligne-Mumford superstack of dimension $\dim Y/\Gamma^*_Z - \dim \awy = (\nr-3\, |\, \nr/2-2)$. 
\end{corollary}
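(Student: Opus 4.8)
The plan is to combine the quotient presentation of Theorem \ref{maintheorem} with the two automorphism computations, Theorems \ref{infaut} and \ref{finitestabilizer}, which together pin down the stabilizers of the $\awy$-action. First I would invoke Theorem \ref{maintheorem} to write $\mnr \cong [(Y/\Gamma^*_Z)/\awy]$. Since $Y/\Gamma^*_Z$ is a superscheme (Theorem \ref{quotientequal}) and $\awy$ is an affine algebraic group superscheme acting on it (Theorem \ref{rep}), this quotient is an algebraic superstack with representable, indeed affine, diagonal. To upgrade ``algebraic'' to ``Deligne--Mumford,'' it suffices to verify that the diagonal is unramified, which for such a quotient superstack is equivalent to requiring that the stabilizer group superscheme of every geometric point be finite and \'etale. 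Because $k$ has characteristic zero, \'etale-ness reduces to two conditions: finiteness of the underlying group and vanishing of the Lie algebra (the tangent space at the identity) of the stabilizer.

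Next I would identify the stabilizer of a geometric point $\Sigma \in \mnr(\Spec k)$ with the automorphism group $\on{Aut}(\Sigma)$, whose Lie algebra is precisely the space $H^0(\mc{A}_{\Sigma})$ of global superconformal vector fields. Theorem \ref{infaut} shows $H^0(\mc{A}_{\Sigma}) = 0$ for $\nr \ge 4$, which gives unramifiedness of the stabilizer. Finiteness, in fact $\on{Aut}(\Sigma) \cong \Z/2\Z$, is exactly the content of Theorem \ref{finitestabilizer}. Hence every stabilizer is the finite \'etale (indeed constant) group superscheme $\Z/2\Z$, so the diagonal of $\mnr$ is unramified and $\mnr$ is a Deligne--Mumford superstack.

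The dimension is then pure bookkeeping. Theorem \ref{quotientequal} records the relative dimension $\dim (Y/\Gamma^*_Z)/S = (\nr+1 \, | \, \nr/2+2)$, and since $S \cong \mathbb{A}^{0|\nr/2-2}$ has $\dim S = (0 \, | \, \nr/2-2)$, the total dimension is $\dim Y/\Gamma^*_Z = (\nr+1 \, | \, \nr)$. Because the stabilizers are finite, the dimension of the quotient superstack is the difference of dimensions; subtracting $\dim \awy = (4 \, | \, \nr/2+2)$ from Theorem \ref{rep} gives
\[
\dim \mnr = (\nr+1 \, | \, \nr) - (4 \, | \, \nr/2+2) = (\nr-3 \, | \, \nr/2-2).
\]

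The main obstacle here is conceptual rather than computational: in the super setting ``Deligne--Mumford'' demands that the stabilizers be unramified \emph{as group superschemes}, so the odd infinitesimal automorphisms must vanish along with the even ones — finiteness of the group of $k$-points alone does not suffice, since an odd nilpotent direction could still obstruct \'etale-ness. This is exactly why Theorem \ref{infaut} is stated for the full sheaf $\mc{A}_{\Sigma}$ rather than only its even part: the vanishing of the odd superconformal vector fields \eqref{oddsuperconformalvectorfield} is what forbids nilpotent odd directions in the stabilizer and secures the \'etale property.
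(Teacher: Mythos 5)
Your proof is correct and takes essentially the same route as the paper: the paper offers no separate argument for this corollary, treating it as an immediate consequence of Theorem \ref{maintheorem} (the presentation $\mnr \cong [(Y/\Gamma^*_Z)/\awy]$), Theorem \ref{infaut} (vanishing of all, even and odd, infinitesimal superconformal symmetries), and Theorem \ref{finitestabilizer} (stabilizers $\cong \Z/2\Z$), together with the same dimension bookkeeping you perform. Your closing observation --- that finiteness of the group of $k$-points alone would not exclude odd nilpotent directions in the stabilizer, which is precisely why Theorem \ref{infaut} is needed alongside Theorem \ref{finitestabilizer} --- correctly articulates the role the paper implicitly assigns to each of the two theorems.
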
 

\bibliographystyle{amsalpha}
\bibliography{versionfive}

\end{document}